\newtheorem{theorem}{Theorem}
\newtheorem{lemma}[theorem]{Lemma}
\theoremstyle{remark}
\newtheorem{remark}[theorem]{\bf Remark}
\numberwithin{theorem}{section}
\numberwithin{question}{section}
\numberwithin{figure}{section}
\numberwithin{equation}{section}
\begin{document}

\title{The percolation energy field and its logarithmic partner}
\bigskip{}
\author[1]{Federico Camia\thanks{federico.camia@nyu.edu}}
\author[2]{Yu Feng\thanks{yufeng\_proba@163.com. This work was conducted while Y.F. was at Tsinghua University, China.}}
%\author{Mingchang Liu\thanks{liumc\_prob@163.com}}
%\author{Hao Wu\thanks{hao.wu.proba@gmail.com}}
\affil[1]{NYU Abu Dhabi, UAE \& Courant Institute, USA}
\affil[2]{University of Michigan, USA}

\date{}
% Use \authorrunning{Short Title} for an abbreviated version of
% your contribution title if the original one is too long%

% Use the package "url.sty" to avoid
% problems with special characters
% used in your e-mail or web address
%

\global\long\def\CR{\mathrm{CR}}
\global\long\def\ST{\mathrm{ST}}
\global\long\def\SF{\mathrm{SF}}
\global\long\def\cov{\mathrm{cov}}
\global\long\def\dist{\mathrm{dist}}
\global\long\def\SLE{\mathrm{SLE}}
\global\long\def\unitD{\mathbb{D}}
\global\long\def\hSLE{\mathrm{hSLE}}
\global\long\def\CLE{\mathrm{CLE}}
\global\long\def\GFF{\mathrm{GFF}}
\global\long\def\inte{\mathrm{int}}
\global\long\def\ext{\mathrm{ext}}
\global\long\def\opi{\overline{\pi}}
\global\long\def\orho{\overline{\rho}}
\global\long\def\inrad{\mathrm{inrad}}
\global\long\def\outrad{\mathrm{outrad}}
\global\long\def\dimH{\mathrm{dim}}
\global\long\def\capa{\mathrm{cap}}
\global\long\def\diam{\mathrm{diam}}
\global\long\def\free{\mathrm{free}}
\global\long\def\Dist{\mathrm{Dist}}
\global\long\def\hF{{}_2\mathrm{F}_1}
\global\long\def\simple{\mathrm{simple}}
\global\long\def\even{\mathrm{even}}
\global\long\def\odd{\mathrm{odd}}
\global\long\def\st{\mathrm{ST}}
%\global\long\def\ust{\mathrm{UST}}
\global\long\def\usf{\mathrm{USF}}
\global\long\def\Leb{\mathrm{Leb}}
\global\long\def\LP{\mathrm{LP}}
\global\long\def\coulomb{\LH}
\global\long\def\coulombnew{\LG}
\global\long\def\kfunc{p}
\global\long\def\OO{\mathcal{O}}
\global\long\def\parti{\mathbf{Q}}
\global\long\def\rad{\mathrm{rad}}

\global\long\def\eps{\epsilon}
\global\long\def\ov{\overline}
\global\long\def\U{\mathbb{U}}
\global\long\def\T{\mathbb{T}}
\global\long\def\HH{\mathbb{H}}
\global\long\def\LA{\mathcal{A}}
\global\long\def\LB{\mathcal{B}}
\global\long\def\LC{\mathcal{C}}
\global\long\def\LD{\mathcal{D}}
\global\long\def\LF{\mathcal{F}}
\global\long\def\LK{\mathcal{K}}
\global\long\def\LE{\mathcal{E}}
\global\long\def\LG{\mathcal{G}}
\global\long\def\LI{\mathcal{I}}
\global\long\def\LJ{\mathcal{J}}
\global\long\def\LL{\mathcal{L}}
\global\long\def\LM{\mathcal{M}}
\global\long\def\LN{\mathcal{N}}
\global\long\def\LQ{\mathcal{Q}}
\global\long\def\LR{\mathcal{R}}
\global\long\def\LT{\mathcal{T}}
\global\long\def\LS{\mathcal{S}}
\global\long\def\LU{\mathcal{U}}
\global\long\def\LV{\mathcal{V}}
\global\long\def\LW{\mathcal{W}}
\global\long\def\LX{\mathcal{X}}
\global\long\def\LY{\mathcal{Y}}
\global\long\def\PartF{\mathcal{Z}}
\global\long\def\LH{\mathcal{H}}
\global\long\def\LJ{\mathcal{J}}
\global\long\def\R{\mathbb{R}}
\global\long\def\C{\mathbb{C}}
\global\long\def\N{\mathbb{N}}
\global\long\def\Z{\mathbb{Z}}
\global\long\def\E{\mathbb{E}}
\global\long\def\PP{\mathbb{P}}
\global\long\def\QQ{\mathbb{Q}}
\global\long\def\A{\mathbb{A}}
\global\long\def\one{\mathbb{1}}
\global\long\def\bn{\mathbf{n}}
\global\long\def\MR{MR}
\global\long\def\cond{\,|\,}
\global\long\def\la{\langle}
\global\long\def\ra{\rangle}
\global\long\def\tree{\Upsilon}
\global\long\def\prob{\mathbb{P}}
\global\long\def\hm{\mathrm{Hm}}
\global\long\def\cross{\mathrm{Cross}}

\global\long\def\sf{\mathrm{SF}}
\global\long\def\wr{\varrho}

%Eve: I like these notations better:
\global\long\def\Im{\operatorname{Im}}
\global\long\def\Re{\operatorname{Re}}
%\renewcommand{\Im}{\mathrm{Im}}
%\renewcommand{\Re}{\mathrm{Re}}

%Eves macros
\global\long\def\ud{\mathrm{d}}
\global\long\def\pder#1{\frac{\partial}{\partial#1}}
\global\long\def\pdder#1{\frac{\partial^{2}}{\partial#1^{2}}}
\global\long\def\der#1{\frac{\ud}{\ud#1}}

\global\long\def\bZnn{\mathbb{Z}_{\geq 0}}

\global\long\def\Vfunc{\LG}%{\LH}%{\LV}
\global\long\def\gfunc{g^{(\rr)}}
\global\long\def\hfunc{h^{(\rr)}}

\global\long\def\SimplexInt{\rho}
\global\long\def\CubeInt{\widetilde{\rho}}

\global\long\def\ii{\mathfrak{i}}
\global\long\def\rr{\mathfrak{r}}
\global\long\def\chamber{\mathfrak{X}}
\global\long\def\Wchamber{\mathfrak{W}}

\global\long\def\SimplexIntKappa8{\SimplexInt}

\global\long\def\nested{\boldsymbol{\underline{\Cap}}}
\global\long\def\unnested{\boldsymbol{\underline{\cap\cap}}}
%\global\long\def\unnested{\boldsymbol{\underline{\cap\scaleobj{0.85}{\,\cdots}\cap}}}
%\global\long\def\removeLink{/}
\global\long\def\unnested{\boldsymbol{\underline{\cap\cap}}}

\global\long\def\acycle{\vartheta}
\global\long\def\bcycle{\tilde{\acycle}}
\global\long\def\Gloop{\Theta}

\global\long\def\metric{\mathrm{dist}}

\global\long\def\adj#1{\mathrm{adj}(#1)}

\global\long\def\bs{\boldsymbol}

\global\long\def\edge#1#2{\langle #1,#2 \rangle}
\global\long\def\graph{G}

\newcommand{\conn}{\vartheta_{\scaleobj{0.7}{\mathrm{RCM}}}}%{\LA}
\newcommand{\hatconn}{\widehat{\vartheta}_{\mathrm{RCM}}}%{\LA}
\newcommand{\realpt}{\smash{\mathring{x}}}
\newcommand{\corrind}{\LC}
\newcommand{\bssymb}{\pi}%{\Pi}%{\zeta}
\newcommand{\PRCM}{\mu}%{\QQ}
\newcommand{\coeff}{p}
\newcommand{\MainConst}{C}

\global\long\def\removeLink{/}
\maketitle

\begin{abstract}
 For site percolation on the triangular lattice, we define two lattice fields that form a logarithmic pair in the sense of conformal field theory. We show that, at the critical point, their two- and three-point correlation functions have well-defined scaling limits, whose structure agrees with that predicted for logarithmic conformal field theories. One of the two fields can be identified with the percolation analog of the Ising energy field, whereas the other is related to the percolation four-arm event.
\end{abstract}

\noindent\textbf{Keywords:}
critical percolation, conformal invariance, {logarithmic conformal field theory, correlation functions,} percolation energy field, logarithmic partner\\ 

\noindent\textbf{MSC:} 60K35, 82B43, 82B27, 82B31, 60J67, 81T27

%\noindent\textbf{Keywords:} 
%critical percolation, conformal invariance, connection probabilities, continuum scaling limit, conformal field theory, logarithmic correlations, operator product expansion\\ 

%\noindent\textbf{MSC:} 60K35, 82B43, 82B27, 82B31, 60J67, 81T27

\tableofcontents

\section{Introduction}

\subsection{Background and motivation}
 The large-scale properties of critical two-dimensional percolation are believed to be described by a logarithmic conformal field theory \cite{Gur93}.
Such properties are encoded in the (continuum) scaling limit of the model,
which corresponds to rescaling space and sending the scaling factor to zero.
In the case of site percolation on the triangular lattice, the scaling limit, obtained by sending the lattice spacing to zero, can be analyzed rigorously \cite{Smirnov:Critical_percolation_in_the_plane,CamiaNewmanPercolationFull,CamiaNewmanPercolation}.

For this model, one of us recently showed \cite{Cam23} that connection probabilities (i.e., the probabilities that selected vertices belong to the same cluster), when appropriately rescaled, have a conformally covariant scaling limit.
The same conclusion applies to various combinations of connection probabilities, which can be interpreted as correlation functions of a percolation spin (density) field (see~\cite{Cam23}).
These results allow one to cast scaling limit statements in the language of conformal field theory (CFT) and represent a significant step in the direction of establishing a rigorous connection between the scaling limit of critical percolation and CFT.

In~\cite{CamiaFeng2024logarithmic}, we addressed the question of the logarithmic nature of the percolation CFT.
More precisely, we showed the presence of logarithmic singularities in the scaling limits of the probabilities of various connectivity events, which can be interpreted as CFT correlation functions, including the four-point function of the density field mentioned above.
In~\cite{CamiaFeng2024logarithmic}, the focus is on the mechanism that produces the logarithmic singularities and on the consequences of such singularities for the putative percolation CFT.

In~\cite{CF24}, building on the results and ideas of~\cite{Cam23}, we provided complete and detailed proofs of some of the results of~\cite{CamiaFeng2024logarithmic} and further explored the CFT structure of critical percolation, establishing a connection with fundamental CFT concepts such as those of \emph{operator product expansion} (OPE) and \emph{fusion rules}.

Making this connection rigorous for percolation is particularly interesting because the field of logarithmic CFTs is significantly less developed than that of ordinary CFTs, despite the fact that logarithmic CFTs have attracted considerable attention in recent years due to their role in the study of important physical models and phenomena such as the Wess-Zumino-Witten (WZW) model, the quantum Hall effect, disordered critical systems, self-avoiding polymers, and the Fortuin-Kasteleyn (FK) random-cluster model (see~\cite{creutzig2013logarithmic} for a review).

Here, we continue our analysis of the scaling limit of critical percolation and its CFT structure and complete the rigorous derivation of the results of~\cite{CamiaFeng2024logarithmic}.
%In particular, we define an \emph{energy field} and its \emph{logarithmic partner} and study their correlation functions, showing that they have the structure predicted in~\cite{Gur93} (see also \cite{creutzig2013logarithmic}) for logarithmic CFTs, and completing the rigorous derivation of the results of~\cite{CamiaFeng2024logarithmic}.
In the next section, we provide some more details on the novelty and conceptual relevance of our results, their connection to the existing literature, and future directions.

\subsection{New contributions and future directions}

{In this paper, in the context of site percolation on the triangular lattice, we define a new pair of lattice fields, introduced here for the first time, and study their two- and three-point correlation functions.
One of the fields, which we call the \emph{percolation energy}, is a local lattice field, in the sense that its definition depends only on the lattice structure and lattice spacing. The other field is non-local, in the sense that its definition depends on an additional free parameter. 

The analysis of the scaling limit of the correlation functions of these fields reveals several interesting properties. More precisely, it shows that
\begin{itemize}
    \item to obtain nontrivial results, the normalization of the percolation energy field must include a logarithm of the lattice spacing,
    \item the two-point function of the energy field with itself converges to zero,
    \item the two-point function of the non-local field with itself contains a logarithm of the distance between the points,
    \item the two-point functions of the two fields have the structure predicted in~\cite{Gur93,creutzig2013logarithmic} for pairs of logarithmic partners,
    \item the three-point function of the non-local field with two percolation spin fields contains a logarithm.
\end{itemize}
These features of the scaling limit allow us to identify the non-local field with the \emph{logarithmic partner} of the energy field, and justify its definition.
To the best of our knowledge, this paper provides the first rigorous construction of a pair of CFT logarithmic partner fields and the first rigorous instance of correlation functions of the type predicted in~\cite{Gur93,creutzig2013logarithmic}.
}

% In the physics literature, the connection between the appearance of logarithmic corrections in percolation and an energy field is discussed in~\cite{VJS12}, where the authors study the (singular) $Q \to 1$ limit of the correlation functions of the energy field of the $Q$-Potts model both analytically and by Monte Carlo simulations.

A prototypical example of a lattice field that converges to a CFT field, when appropriately rescaled, is the Ising spin (magnetization) field \cite{ChelkakHonglerIzyurovConformalInvarianceCorrelationIsing,CamiaGarbanNewman}. In this and other known cases (such as the Gaussian free field and the percolation spin (density) field \cite{Cam23}), the lattice field is rescaled by a power of the lattice spacing, or by a quantity that scales like a power.
{As mentioned above, an unusual and interesting} aspect of the percolation energy field is that it needs to be rescaled by a factor containing the logarithm of the lattice spacing.
This is related to the fact that, in the scaling limit, the energy-energy two-point function is zero (see \eqref{eqn::energy_two_point} or \eqref{eqn::energy_two_point_new}).
This unusual feature does not signal triviality or independence, but is instead a manifestation of the logarithmic nature of the percolation CFT.

The logarithmic partner of the energy field is also unusual in that its definition requires an additional cutoff parameter on top of the lattice spacing, making it, in some sense, non-local.
This second cutoff is initially kept fixed and is sent to zero only after the lattice spacing.
(A similar situation appears in the definition of the \emph{edge counting operator} in~\cite{CamiaFoitGandolfiKlebanCFTBLS}.)
{We emphasize that the introduction of an additional parameter, independent of the lattice spacing, is not a technical trick but reflects the non-locality of the field.}
An interesting feature of the logarithmic partner of the percolation energy field, {related to its non-locality,} is that its two-point function contains a logarithmic term (see \eqref{eqn::hat_hat_two_point}). {This} cannot happen for \emph{primary} (pure scaling) CFT fields, whose two- and three-point functions are fully determined (up to a multiplicative constant) by conformal covariance and are products of powers.

{We note that our definitions and results are consistent with the findings of Vasseur, Jacobsen and Saleur~\cite{VJS12}, who used both analytical methods and Monte Carlo simulations to argue that certain $Q$-Potts model two-point functions involving the Potts energy field/operator should scale logarithmically in the (singular) limit $Q\to 1$. This observation supports the belief that the (formal) $Q\to 1$ limit of the Potts CFT gives the percolation CFT, despite the fact that the limit leads to singular expressions and therefore does not make proper mathematical sense.}

{The main goal and the main new contribution of this paper is the construction of a logarithmic pair via the scaling limit of a critical lattice model. Being the first mathematically rigorous result of this type, this represents a significant conceptual advancement in the study of the scaling limit of critical percolation and, more generally, in the mathematical analysis of conformal field theory.
In order to identify our percolation energy field and its partner as a logarithmic pair, it is sufficient to analyze certain two- and three-point functions and show that they have the structure predicted in~\cite{Gur93,creutzig2013logarithmic}. This is precisely what is done in this paper.

At the same time, our results suggest other natural questions, which are beyond the scope of this paper but are certainly interesting and can probably be tackled with similar methods.
One such question concerns higher-order correlation functions that involve the logarithmic partner of the energy field. In principle, these can be studied using arguments similar to those used in this paper, but the analysis would be more complicated due to the larger number of points involved. Moreover, higher-order correlation functions would probably not reveal anything fundamentally new, since the logarithmic nature of the CFT is already apparent in the two- and three-point functions calculated here.}

Another natural and interesting open problem is the existence of the scaling limit of the percolation energy field and its logarithmic partner beyond their correlation functions. A positive result in this direction would be analogous to the results of~\cite{CamiaGarbanNewman} for the Ising spin (magnetization) field and of~\cite{Cam23} for the percolation spin (density) field.
The interested reader is referred to Section~1.2 of~\cite{CF24} for some CFT background and to~\cite{GarbanKupiainenEnergyField} for an insightful discussion on the (non-)existence and nature of CFT fields, including the energy field of the Ising model.
The latter, unlike the spin field, has never been shown to exist as a generalized function, despite being well-defined within correlation functions~\cite{HonglerSmirnovIsingEnergy,ChelkakHonglerIzyurovCorrelationsPrimaryFieldsIsing}. This has prompted speculations that the Ising energy field can only be defined within correlation functions (see~\cite{GarbanKupiainenEnergyField}).
Tackling this existence problem in the case of the percolation energy field and its logarithmic partner is likely to require a combination of the tools used in this paper and in~\cite{Cam23}, and possibly new ideas, due to the particular nature of the fields involved.

\subsection{Definitions and main results}
\label{subsec::def_result}
Let $\mathcal{T}$ denote the triangular lattice and let $\mathcal{H}$ denote the hexagonal lattice dual to $\mathcal{T}$. Each vertex of $\mathcal{T}$ corresponds to a face (that is, a hexagon) of $\mathcal{H}$ in a natural way and we often identify them. Assume that $\mathcal{T}$ and $\mathcal{H}$ are embedded in $\mathbb{C}$ in such a way that one of the vertices of $\mathcal{T}$ coincides with the origin of $\mathbb{C}$ (see Figure~\ref{fig::embedding}).
We consider critical site percolation on the rescaled triangular lattice $a\mathcal{T}$, where $a>0$ is a scaling factor and each vertex of $a\mathcal{T}$ is assigned a black or white label independently, with equal probability. We denote the resulting probability measure on percolation configurations by $\mathbb{P}^a$. 
%For a subgraph $G$ of $\mathcal{T}$, we define its (outer) boundary $\partial G$ as
%\begin{equation*} 
%	\partial G:=\{v\in \mathcal{T}\setminus G: \exists u\in G \text{ such that }u\sim v \},
%\end{equation*}
%where $u\sim v$ denotes that $u$ and $v$ are adjacent in $G$.
We call a sequence of vertices $(v_1,\ldots,v_{n+1})$ of $a\mathcal{T}$ a black (respectively, white) \emph{path} if $v_1,\ldots,v_{n+1}$ are all black (resp., white) vertices and $v_j \sim v_{j+1}$ for $j=1,\ldots,n$, where $v_{j}\sim v_{j+1}$ denotes that $v_j$ and $v_{j+1}$ are adjacent in $a\mathcal{T}$.
%If $v_{n+1}=v_1$, then $(v_1,\ldots,v_{n+1})$ is called a black (resp., white) \emph{circuit}. 

For $z\in \mathbb{C}$ and $\delta>0$, we define $B_\delta(z)=\{w\in \mathbb{C}: |z-w|<\delta\}$ and write
\begin{equation*}
	\pi_a:=\mathbb{P}^a\Big[0\xleftrightarrow{B}\partial B_{1}(0)\Big],
\end{equation*}
where $\{0\xleftrightarrow{B}\partial B_{1}(0)\}$ denotes a \emph{one-arm event}, i.e., the event that there exists a black path connecting $0$ to $\partial B_{1}(0)$. For distinct $z_1,z_2,z_3\in \mathbb{C}$, define 
\begin{align} \label{eqn::def_F}
	F(z_1,z_2,z_3):=\vert z_1-z_2\vert^{-\frac{5}{4}} \vert z_1-z_3\vert^{-\frac{5}{4}}|z_3-z_2|^{\frac{25}{24}}.
\end{align}

\begin{figure}
	\includegraphics[width= 0.3\textwidth]{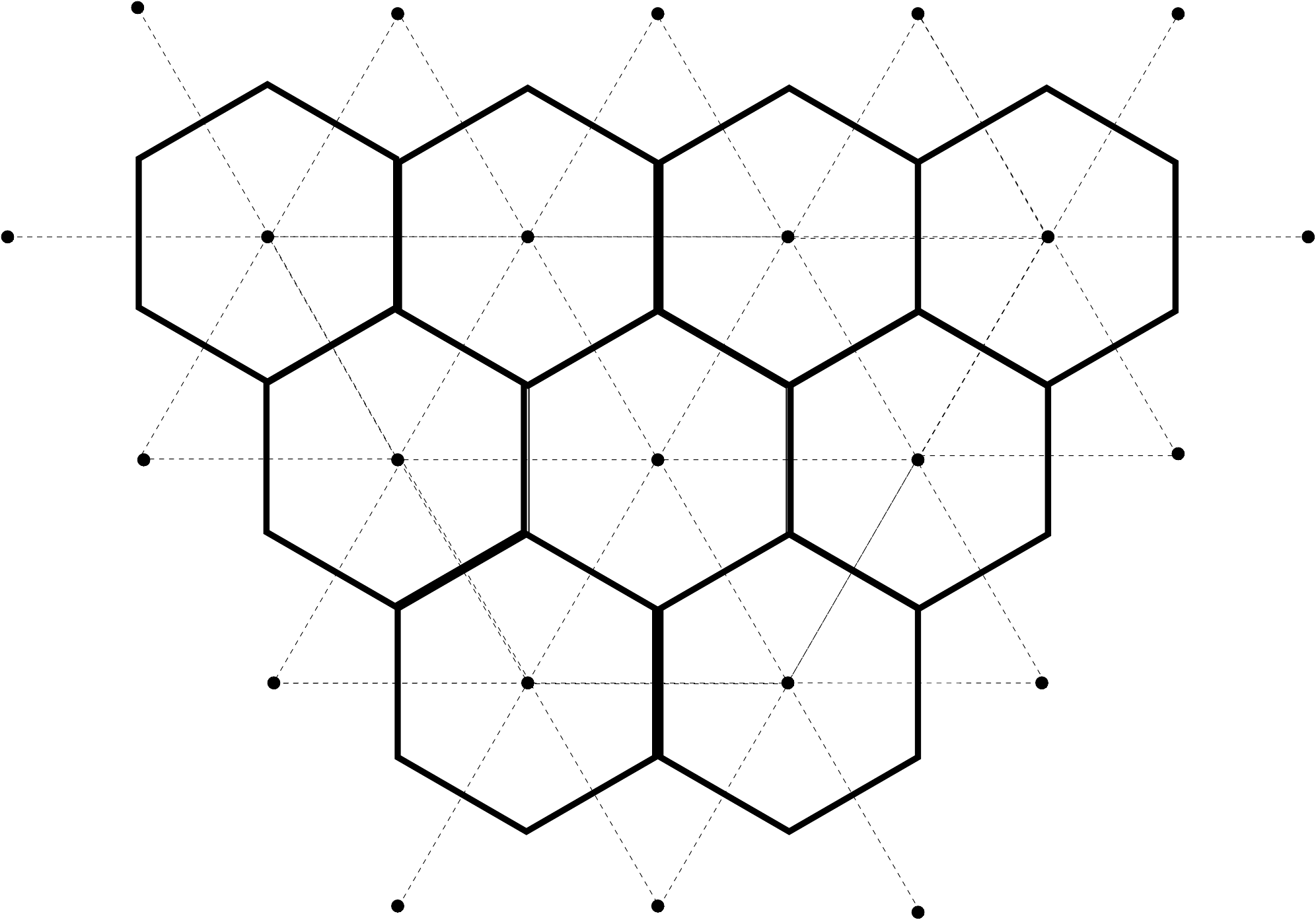}
	\caption{Embedding of the triangular and hexagonal lattices in $\mathbb{C}$.}
	\label{fig::embedding}
\end{figure}

Let $\{\mathcal{C}_j^a\}_j$ denote the collection of black clusters {(maximal connected components of the graph consisting of black vertices, with the adjacency relation $\sim$)} on $a\mathcal{T}$ and assign to each cluster $\mathcal{C}_j^a$ a random sign (spin) $\sigma_j=\pm 1$, where $\{\sigma_j\}_j$ is a collection of symmetric, $(\pm1)$-valued, i.i.d. random variables. Then for each $z^a\in a\mathcal{T}$, we let 
\begin{equation} \label{def:lattice-field}
	S_{z^a}=\begin{cases}
		\sigma_j,& \text{ if } z^a\in \mathcal{C}_j^a,\\
		0, & \text{ if }z^a \text{ is white}. 
	\end{cases}
\end{equation} 
We denote by $\langle \cdot \rangle^a$ the expectation with respect to the distribution of $\{S_{z^a}\}_{z^a\in a\mathcal{T}}$. 

The interest and relevance of the lattice field \eqref{def:lattice-field} stems from the fact that it provides an explicit version of the percolation \emph{density} or \emph{spin field} discussed in the physics literature.
As shown by one of us in the recent work \cite{Cam23}, its correlation functions, when appropriately rescaled, have a conformally covariant scaling limit as does the field itself: suppose that $z_1,\ldots,z_{2n}\in \mathbb{C}$ are distinct points and that $z_1^a,\ldots, z_{2n}^a\in a\mathcal{T}$ satisfy $\lim_{a\to 0}z_j^a=z_j$ for $1\leq j\leq 2n$, then we have 
\begin{align} \label{eqn::cvg_spin_correlation}
	\langle \psi(z_1)\cdots \psi(z_{2n})\rangle:=\lim_{a\to 0} \pi_a^{-2n}\times \langle S_{z_1^a}\cdots S_{z_{2n}^a}\rangle^a\in (0,\infty);
\end{align}
moreover, let $\varphi$ be any non-constant M\"obius transformation such that $\varphi(z_j)\neq \infty$ for $1\leq j\leq 2n$, then we have
\begin{align} \label{eqn::cov_psi}
	\big\langle  \psi\big(\varphi(z_1)\big)\cdots \psi\big(\varphi(z_{2n})\big)\big\rangle= \langle \psi(z_1)\cdots \psi(z_{2n})\rangle\times \prod_{j=1}^{2n}\vert \varphi'(z_j)\vert^{-\frac{5}{48}}. 
\end{align}
In another recent work~\cite{CF24}, we proved that there exist constants $C_1,C_L\in(0,\infty)$ such that, as $z_1,z_2\to z\in \mathbb{C}\setminus \{z_3,z_4\}$, 
\begin{align} \label{eqn::OPE1}
	\begin{split}
		\langle \psi(z_1)\psi(z_2)\psi(z_3)\psi(z_4)\rangle
		= &C_1|z_2-z_1|^{-\frac{5}{24}}\left(|z_4-z_3|^{-\frac{5}{24}}- C_L |z_2-z_1|^{\frac{5}{4}} F(z,z_3,z_4)\log\left|z_2-z_1\right|\right)\\
		&\qquad+o\big(|z_2-z_1|^{\frac{25}{24}}\vert \log|z_2-z_1|\vert\big),
	\end{split}
\end{align}
where $F(z,z_3,z_4)$ is the function defined in~\eqref{eqn::def_F} and the constant $C_1>0$ is such that
\begin{align} \label{eqn::C_1}
		\langle\psi(z_1)\psi(z_2)\rangle=\sqrt{C_1}\vert z_2-z_1 \vert^{-\frac{5}{24}}.
\end{align}

Indeed, the ideas in the proof of~\eqref{eqn::OPE1} given in~\cite{CF24} are sufficient to prove the following strengthening of Theorem~1.1 of~\cite{CF24}.
%show that, 
{\begin{theorem}
   As $z_1,z_2\to z\in \mathbb{C}\setminus \{z_3,z_4\}$,
\begin{align} \label{eqn::OPE2}
	\begin{split}
		\langle \psi(z_1)\psi(z_2)\psi(z_3)\psi(z_4)\rangle =&\, C_1|z_2-z_1|^{-\frac{5}{24}}\Big(|z_4-z_3|^{-\frac{5}{24}}- C_L |z_2-z_1|^{\frac{5}{4}} F(z,z_3,z_4)  \log\left|\tfrac{(z_2-z_1)(z_4-z_3)}{(z_3-z_1)(z_4-z_2)}\right|\\
		&\qquad\qquad\qquad\qquad \qquad+C_0 |z_2-z_1|^{\frac{5}{4}} F(z,z_3,z_4)\Big)+o\big(|z_2-z_1|^{\frac{25}{24}}\big),
	\end{split}
\end{align}
for some $C_0\in \mathbb{R}$.
\end{theorem}}
For completeness, we sketch the proof of~\eqref{eqn::OPE2} in Section~\ref{sec::sketch_four_spin}. 

\begin{remark}
We point out that the constants $C_L$ and $C_0$ are expected to be universal, in the sense that they should not depend on the lattice.
\end{remark}

\paragraph*{The percolation energy field $\phi$.} For each vertex $z^a\in a\mathcal{T}$, we write $z^{(a,-)}$ for $z^a-a$ and $z^{(a,+)}$ for $z^a+a$, and define 
\begin{align} \label{def::energy}
	\mathcal{E}_{z^a}:=S_{z^{(a,-)}}S_{z^{(a,+)}}-\langle S_{z^{(a,-)}}S_{z^{(a,+)}}\rangle^a,
\end{align}
which we call the lattice \emph{percolation energy field}. The name is motivated by the analogy to the energy field of the Ising model.

\begin{remark}
Obviously, we could have defined the energy field using any of the three axes of symmetry of $\mathcal{T}$.
\end{remark}

\begin{theorem} \label{thm::energy_spin}
	Let $z_1,\ldots,z_n\in \mathbb{C}$ be $n$ distinct points. Suppose that $z_1^a,\ldots,z_n^a\in a\mathcal{T}$ satisfy $\lim_{a\to 0}z_j^a=z_j$ for $1\leq j\leq n$. Then there exists a constant $C_2\in (0,\infty)$ such that
\begin{align}
		\langle \phi(z_1)\psi(z_2) \psi(z_3)\rangle :=&\lim_{a\to 0} \big(a^{\frac{5}{4}}|\log a|\big)^{-1} \pi_a^{-2}\times \langle \mathcal{E}_{z_1^a}S_{z_2^a}S_{z_3^a}\rangle^a= C_2 F(z_1,z_2,z_3), \label{eqn::three_point_energy_density}\\
		\langle \phi(z_1)\phi(z_2)\rangle:=&\lim_{a\to 0} \big(a^{\frac{5}{4}}\vert \log a\vert\big)^{-2} \times \langle \mathcal{E}_{z_1^a}\mathcal{E}_{z_2^a}\rangle^a=0, \label{eqn::energy_two_point}
\end{align}
	where $F(z_1,z_2,z_3)$ is the function defined in~\eqref{eqn::def_F}. If $n=\ell+2k$ for some integers $\ell,k\geq 1$, then
	\begin{align}
		\langle \phi(z_1)\cdots\phi(z_\ell)\psi(z_{\ell+1})\ldots \psi(z_{\ell+2k})\rangle:=&\lim_{a\to 0}\big(a^{\frac{5}{4}}\vert \log a\vert\big)^{-\ell} \pi_a^{-2k}\times \langle \mathcal{E}_{z_1^a}\ldots \mathcal{E}_{z_\ell^a}S_{z_{\ell+1}^a}\ldots S_{z_{\ell+2k}^a}\rangle^a\in (0,\infty),\label{eqn::mixed_correlation_spin_energy}\\
		\langle \phi(z_1)\cdots \phi(z_\ell)\rangle:=&\lim_{a\to 0} \big(a^{\frac{5}{4}}\vert\log a\vert\big)^{-\ell}\times \langle \mathcal{E}_{z_1^a}\cdots \mathcal{E}_{z_\ell^a}\rangle^a=0; \label{eqn::energy_n_point}
	\end{align} 
	moreover, let $\varphi$ be any non-constant M\"obius transformation such that $\varphi(z_j)\neq \infty$ for $1\leq j\leq n$, then %for the function $\langle \phi(z_1)\cdots \phi(z_\ell)\psi (z_{\ell+1})\cdots \psi(z_{\ell+2k})\rangle$ defined in~\eqref{eqn::mixed_correlation_spin_energy}, 
    we have 
	\begin{align}\label{eqn::cov_mixed_correlation_spin_energy}
		\begin{split}
			& \big\langle \phi\big(\varphi(z_1)\big)\cdots \phi\big(\varphi(z_\ell)\big)\psi\big(\varphi(z_{\ell+1})\big)\cdots \psi\big(\varphi(z_{\ell+2k})\big)\big\rangle \\
            & \qquad = \langle \phi(z_1)\cdots\phi(z_\ell)\psi(z_{\ell+1})\cdots \psi(z_{\ell+2k})\rangle \times \Big(\prod_{i=1}^{\ell}\vert \varphi'(z_i)\vert^{-\frac{5}{4}}\Big) \Big(\prod_{j=1}^{2k}\vert \varphi'(z_{\ell+j})\vert^{-\frac{5}{48}}\Big).
		\end{split}
	\end{align}
\end{theorem}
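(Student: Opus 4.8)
The plan is to reduce every energy correlation to a spin correlation, via the defining identity \eqref{def::energy}, and then to feed the result into the four-point scaling limit and its logarithmic OPE \eqref{eqn::OPE1}--\eqref{eqn::OPE2}. Writing $w_1=z_1^{(a,-)}$, $w_2=z_1^{(a,+)}$, the definition gives the exact identity
\begin{align*}
\langle \mathcal{E}_{z_1^a} S_{z_2^a} S_{z_3^a}\rangle^a
=\cov^a\big(S_{w_1}S_{w_2},\,S_{z_2^a}S_{z_3^a}\big)
=\langle S_{w_1}S_{w_2}S_{z_2^a}S_{z_3^a}\rangle^a-\langle S_{w_1}S_{w_2}\rangle^a\langle S_{z_2^a}S_{z_3^a}\rangle^a,
\end{align*}
and, for a general insertion, each factor $\mathcal{E}_{z_i^a}$ becomes the pair of spins at the neighbours $z_i^a\pm a$ with its mean subtracted. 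The subtraction is what builds the connected (covariance) structure and what will cancel the leading, non-logarithmic contribution.

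For \eqref{eqn::three_point_energy_density} I would start from the fact that, for fixed distinct points $u_1,u_2,z_2,z_3$, the relation \eqref{eqn::cvg_spin_correlation} gives $\langle S_{u_1^a}S_{u_2^a}S_{z_2^a}S_{z_3^a}\rangle^a\sim \pi_a^2\langle\psi(u_1)\psi(u_2)\psi(z_2)\psi(z_3)\rangle$ and $\langle S_{u^a}S_{v^a}\rangle^a\sim\pi_a\langle\psi(u)\psi(v)\rangle$. Hence, at fixed separation, the covariance above equals $\pi_a^2$ times the connected four-point function $\langle\psi(u_1)\psi(u_2)\psi(z_2)\psi(z_3)\rangle-\langle\psi(u_1)\psi(u_2)\rangle\langle\psi(z_2)\psi(z_3)\rangle$, whose behaviour as $u_1,u_2\to z_1$ is, by \eqref{eqn::OPE1} and \eqref{eqn::C_1}, purely the logarithmic term $-C_1C_L|u_2-u_1|^{25/24}F(z_1,z_2,z_3)\log|u_2-u_1|$, the identity-channel terms cancelling in the difference. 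Evaluating the surviving term at the lattice separation $|u_2-u_1|=2a$ and using $\pi_a^2(2a)^{25/24}\asymp a^{5/4}$ yields $\langle\mathcal{E}_{z_1^a}S_{z_2^a}S_{z_3^a}\rangle^a\asymp a^{5/4}|\log a|\,\pi_a^2\,F(z_1,z_2,z_3)$, with $C_2$ a universal multiple of $C_1C_L$. The four-arm exponent $5/4$ is the signature of the mechanism: $S_{w_1}S_{w_2}-\langle S_{w_1}S_{w_2}\rangle^a$ correlates with a distant cluster only when the two neighbours of $z_1^a$ sit on distinct arms, i.e.\ on a four-arm configuration around $z_1$ (probability $\asymp a^{5/4}$ from the lattice to a macroscopic scale), and the $|\log a|$ is the scale-invariant (marginal) integral over the intermediate scales at which this four-arm region hooks onto the macroscopic cluster carrying $z_2\leftrightarrow z_3$.

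The vanishing statements \eqref{eqn::energy_two_point} and \eqref{eqn::energy_n_point} come from the same reduction together with a counting of logarithms. In the two-point case $\langle\mathcal{E}_{z_1^a}\mathcal{E}_{z_2^a}\rangle^a=\cov^a(S_{w_1}S_{w_2},S_{w_1'}S_{w_2'})$ is a connected four-spin correlation with two merging pairs, one at $z_1$ and one at $z_2$; each pair contributes a four-arm factor, giving order $a^{5/2}$ up to logarithms, and the point is that strictly fewer than two powers of $|\log a|$ appear. The logarithm attached to a given energy is produced by the logarithmically divergent scale integral over the range on which its four-arm region joins a macroscopic cluster anchored by a spin field; with no spin present the two four-arm regions can only couple to each other, which removes at least one of the $\ell$ logarithms. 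Thus $\langle\mathcal{E}_{z_1^a}\mathcal{E}_{z_2^a}\rangle^a=o\big(a^{5/2}|\log a|^2\big)$ and, after dividing by $(a^{5/4}|\log a|)^2$, the limit is $0$; the same logarithmic-deficit argument gives \eqref{eqn::energy_n_point} for all $\ell$. By contrast, in the mixed correlation \eqref{eqn::mixed_correlation_spin_energy} the $2k\ge 2$ spins anchor a macroscopic cluster network onto which each of the $\ell$ energies couples, so all $\ell$ logarithms survive; peeling off the energies one at a time, using at each step the higher-point analogue of \eqref{eqn::OPE1} established by the methods of \cite{CF24}, produces a finite nonzero limit.

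Conformal covariance \eqref{eqn::cov_mixed_correlation_spin_energy} I would obtain from the covariance \eqref{eqn::cov_psi} of the spin correlations together with the local nature of the fusion defining $\mathcal{E}$. Under a M\"obius map $\varphi$ the neighbour separation transforms, to leading order, as $|z_i^{(a,+)}-z_i^{(a,-)}|\mapsto|\varphi'(z_i)|\,|z_i^{(a,+)}-z_i^{(a,-)}|$, so the two merging spins (each of weight $5/48$) contribute $|\varphi'(z_i)|^{5/24}$ and the factor $|z_i^{(a,+)}-z_i^{(a,-)}|^{25/24}$ a further $|\varphi'(z_i)|^{25/24}$, for a total weight $|\varphi'(z_i)|^{5/24+25/24}=|\varphi'(z_i)|^{5/4}$ per energy, while the additive term $\log|\varphi'(z_i)|$ produced by the logarithm is subleading with respect to $|\log a|$ and drops out; this is exactly why $\phi$ transforms as a primary of weight $5/4$ despite belonging to a logarithmic multiplet. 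For the three-point function this covariance is already manifest in the transformation rule of $F$, which carries weights $5/4,5/48,5/48$. The main obstacle is the diagonal limit used in the second paragraph: the OPE \eqref{eqn::OPE1} is established only for fixed separation in the regime $a\ll|u_2-u_1|\ll 1$, and one must justify pushing $|u_2-u_1|$ down to the lattice cutoff $2a$---interchanging $a\to0$ with $u_1,u_2\to z_1$ and correctly turning $\log|u_2-u_1|$ into $\log a$. This demands a uniform quasi-multiplicativity/RSW control of the four-arm event bridging the lattice and macroscopic scales, and is where the bulk of the technical work lies.
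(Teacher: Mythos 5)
Your reduction of every energy insertion to a connected (mean-subtracted) spin correlation, and your identification of the mechanism (a four-arm event of probability $\asymp a^{5/4}$ at each energy point, plus a marginal sum over intermediate scales producing one $|\log a|$ per energy that couples to a spin-anchored cluster), is exactly the heuristic underlying the paper. But your actual derivation of \eqref{eqn::three_point_energy_density} consists of evaluating the continuum OPE \eqref{eqn::OPE1} at the lattice separation $|u_2-u_1|=2a$, and this step is not licensed: \eqref{eqn::OPE1}--\eqref{eqn::OPE2} are statements about the limit $z_1,z_2\to z$ of \emph{continuum} correlation functions, i.e.\ taken \emph{after} $a\to 0$, whereas the theorem concerns lattice correlations whose internal separation is tied to the cutoff. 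You name this interchange of limits as ``the main obstacle'' and ``where the bulk of the technical work lies,'' but you provide no mechanism for it --- and that mechanism \emph{is} the proof. The paper never substitutes $2a$ into the OPE; instead it performs the dyadic decomposition directly at the lattice level (the terms $T^a_m$ in \eqref{def::T^a_m}), proves by coupling/Cauchy-sequence arguments (Lemma~\ref{lem::three_point_spin_energy_aux}) that for each fixed fraction $y$ of the $\sim|\log a|$ scales the rescaled lattice term $a^{-5/4}\pi_a^{-2}T^a_{m_y}$ converges to a fixed constant times the corresponding continuum per-scale quantity, establishes uniform RSW bounds so that dominated convergence applies to the sum over scales, and only then compares with \eqref{eqn::OPE2} (via Remark~\ref{rem::truncated_four_point}) to identify the limit as $C_2F(z_1,z_2,z_3)$. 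Without an argument of this type your outline yields at best $\asymp$ bounds, not the existence of the limit and not the constant.

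Two further gaps of the same nature. First, for \eqref{eqn::mixed_correlation_spin_energy} your plan of ``peeling off the energies one at a time'' does not address why the limit is \emph{nonzero}: since the pure-energy correlations \eqref{eqn::energy_two_point}, \eqref{eqn::energy_n_point} vanish under the analogous normalization precisely because of cancellations between the telescoping sums, positivity of the mixed correlation is delicate; in the paper it rests on the strict inequality $V_3>\tfrac12$ from \cite{CF24}, which makes each per-scale term in the double sum converge to $\big(V_3-\tfrac12\big)^2\,\hat{U}(z_1,z_2,z_3,z_4)>0$ (Lemma~\ref{lem::four_point_energy_spin_aux}). Second, for \eqref{eqn::cov_mixed_correlation_spin_energy} your weight bookkeeping $\tfrac{5}{24}+\tfrac{25}{24}=\tfrac54$ per energy gives the right exponent but is carried out on the lattice field, on which a M\"obius map does not act (it does not preserve $a\mathcal{T}$, nor the separation $2a$); the paper instead routes through the nonlocal field $\phi^{\delta}$ at macroscopic separation $\delta$, proves $\lim_{\delta\to 0}\delta^{-\frac{25}{12}}|\log\delta|^{-2}\langle\phi^{\delta}(z_1)\phi^{\delta}(z_2)\psi(z_3)\psi(z_4)\rangle=\tfrac{\hat{C}}{C}\langle\phi(z_1)\phi(z_2)\psi(z_3)\psi(z_4)\rangle$ (Steps $1^*$--$4^*$), and deduces covariance from \eqref{eqn::cov_psi} applied at fixed $\delta$, where the transformation rule is meaningful. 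So your proposal reproduces the paper's heuristics and exponent arithmetic, but the steps it defers are exactly the ones that constitute the paper's proof.
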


Equation~\eqref{eqn::cvg_spin_correlation} and Theorem~\ref{thm::energy_spin} reveal that the lattice fields $S_{z^a}$ and $\mathcal{E}_{z^a}$ have a natural normalization in terms of the lattice spacing $a$. It is therefore convenient to introduce rescaled versions of those fields:
\begin{align}
    & \psi_a(z^a) := \pi_a^{-1} S_{z^a}, \label{def::renormalized-spin} \\
    & \phi_a(z^a) := \big(a^{5/4}\vert\log a \big\vert)^{-1} \mathcal{E}_{z^a}. \label{def::normalized-energy}
\end{align}
For example, using this notation, equations \eqref{eqn::three_point_energy_density} and \eqref{eqn::energy_two_point} become
\begin{align}
    & \langle \phi(z_1)\psi(z_2) \psi(z_3)\rangle := \lim_{a \to 0} \langle \phi_a(z^a_1) \psi_a(z^a_2) \psi_a(z^a_3)\rangle^a = C_2 F(z_1,z_2,z_3), \label{eqn::three_point_energy_density_new} \\
    & \langle \phi(z_1)\phi(z_2)\rangle := \lim_{a\to 0} \langle \phi_a(z_1^a)\phi_a(z_2^a)\rangle^a=0.
    \label{eqn::energy_two_point_new}
\end{align}
These equations suggest that, in some appropriate sense, the lattice fields $\psi_a$ and $\phi_a$ converge to continuum fields $\psi$ and $\phi$.
The convergence of $\psi_a$ to a continuum field (a generalized function) $\psi$ in an appropriate Sobolev space is proved in~\cite{Cam23}.
The convergence of $\phi_a$ beyond its correlations is an interesting question, but we will not address it here.
%Sometimes, it will nevertheless be convenient to think of $\phi$ as the limit of $\phi_a$:
%\begin{align}
%    \phi_a \stackrel{c}{\to} \phi \text{ as } a \to 0,
%\end{align}
%where by the notation $\stackrel{c}{\to}$ we mean that the convergence takes place within (certain) correlations, as in equations \eqref{eqn::three_point_energy_density_new} and \eqref{eqn::energy_two_point_new}.
For recent results on the convergence of other lattice fields, including the Ising spin and energy fields, the reader is referred to \cite{HonglerSmirnovIsingEnergy,ChelkakHonglerIzyurovConformalInvarianceCorrelationIsing,CamiaGarbanNewman,CamiaGarbanNewmanFieldII,CamiaJiangNewmanCMEReview} and~\cite{GarbanKupiainenEnergyField}.

Equation~\eqref{eqn::energy_two_point_new} may suggest that the field $\phi$ is trivial, but this is clearly not the case. This unusual behavior is due to the logarithm in the normalization of $\phi_a$, which is related to the logarithmic nature of the percolation CFT (see \cite{CF24,CamiaFeng2024logarithmic}).

\begin{remark}
        To simplify the notation, we write $f\lesssim g$ if $f/g$ is bounded by a finite constant from above, and $f \asymp g$ if $f \lesssim g$ and $g \lesssim f$.
		%We adopt the same notation as in Theorem~\ref{thm::energy_spin}. 
        We emphasize that, using the same strategy as in our proof of Theorem~\ref{thm::energy_spin}, one can show that, with the notation of Theorem~\ref{thm::energy_spin},
{		\begin{align*}
			\langle \mathcal{E}_{z_1^a}\mathcal{E}_{z_2^a}\rangle^a \asymp a^{\frac{5}{2}} \vert \log a \vert \quad \text{as }a\to 0. 
		\end{align*}        
        The mechanism that leads to the presence of $\vert\log a \vert$ is shown in Figure~\ref{fig::log_geometry_energy}.
        More generally, for any $\ell \geq 2$,} 
		\begin{align*}
			\langle \mathcal{E}_{z_1^a}\cdots \mathcal{E}_{z_\ell^a}\rangle^a \asymp a^{\frac{5}{4}\ell} \vert \log a \vert^{\ell-1} \quad \text{as }a\to 0,
		\end{align*}
        and this asymptotic behavior is the reason for~\eqref{eqn::energy_n_point}.
\end{remark}

\paragraph*{The logarithmic partner $\hat{\phi}$ of the energy field $\phi$.} Let $\delta>0$ and  assume that $\delta$ is not a multiple of $a$.
For $z^a\in a\mathcal{T}$, let $z^{(a,\delta,-)}$ (resp., $z^{(a,\delta,+)}$) be the vertex in $a\mathcal{T}$ closest to $z^a-\delta$ (resp., $z^a+\delta$), and define 
\begin{align} \label{def::Edelta}
\mathcal{E}^{\delta}_{z^a}:=S_{z^{(a,\delta,-)}}S_{z^{(a,\delta,+)}}-\langle S_{z^{(a,\delta,-)}}S_{z^{(a,\delta,+)}}\rangle^a.
\end{align}
This is a non-local version of the energy operator defined above.

For $z\in \mathbb{C}$, we also write $z^{(\delta,-)}$ for $z-\delta$ and $z^{(\delta,+)}$ for $z+\delta$. 
%For $\delta>0$ and $z^a\in a\mathcal{T}$, we denote by $\{z^a\xleftrightarrow{BWBW} \partial B_{\delta}(z^a)\}$ the event that there are four paths with alternating labels (black/white) connecting $z^a$ to $\partial B_{\delta}(z^a)$ and define $\mathcal{F}_{\delta}(z^a):=\{z^a\xleftrightarrow{BWBW} \partial B_{\delta}(z^a)\}\cap \{z^a \text{ is white}\}$.
%%; and we say that $z^a$ is a \emph{$\delta$-pivotal point} if $\mathcal{F}_{\delta}(z^a)$ happens. 
%Define
%\begin{align*}
%	\mathcal{E}_{z^a}^{\delta}:= S_{z^{(a,-)}}S_{z^{(a,+)}} \mathbb{1}_{\mathcal{F}_{\delta}(z^a)}-\langle S_{z^{(a,-)}}S_{z^{(a,+)}} \mathbb{1}_{\mathcal{F}_{\delta}(z^a)}\rangle^a. 
%\end{align*}
%
%For two vertices $z_1^a,z_2^a\in a\mathcal{T}$ and a subset $C$ of $\mathbb{C}$, we denote by $\{z_1^a\xleftrightarrow{B}z_2^a\}$ the event that $z_1^a$ and $z_2^a$ belong to the same black cluster and denote by $\{z_1^a\centernot{\xleftrightarrow{B}}z_2^a\}$ its complement; let $\{z_1^a\xleftrightarrow[C]{B}z_2^a\}$ denote the event that $z_1^a$ and $z_2^a$ belong to the same black cluster but there is no black path fully contained in $C$ connecting $z_1^a$ to $z_2^a$.
Note that, for $z_1^a,z_2^a,z_3^a\in a\mathcal{T}$ and $\delta_1,\delta_2>0$, we have 
\begin{align*}
	\begin{split}
		\langle \mathcal{E}_{z_1^a}^{\delta_1} S_{z_2^a}S_{z_3^a}\rangle^a=& \langle S_{z_1^{(a,\delta_1,-)}}S_{z_1^{(a,\delta_1,+)}}S_{z_2^a}S_{z_3^a}\rangle^a- \langle S_{z_1^{(a,\delta_1,-)}}S_{z_1^{(a,\delta_1,+)}}\rangle^a\langle S_{z_2^a}S_{z_3^a}\rangle ^a,\\
		\langle \mathcal{E}_{z_1^a}^{\delta_1}\mathcal{E}_{z_2^a}\rangle^a= &\langle S_{z_1^{(a,\delta_1,-)}} S_{z_1^{(a,\delta_1,+)}}\mathcal{E}_{z_2^a} \rangle^a,\\
		\langle \mathcal{E}^{\delta_1}_{z_1^a}\mathcal{E}^{\delta_2}_{z_2^a}\rangle^a=& \langle S_{z_1^{(a,\delta_1,-)}} S_{z_1^{(a,\delta_1,+)}}
S_{z_2^{(a,\delta_2,-)}} S_{z_2^{(a,\delta_2,+)}}\rangle^a-\langle S_{z_1^{(a,\delta_1,-)}} S_{z_1^{(a,\delta_1,+)}}
\rangle^a\langle
S_{z_2^{(a,\delta_2,-)}} S_{z_2^{(a,\delta_2,+)}}\rangle^a.
	\end{split}
\end{align*}

Assume that $z_1,z_2,z_3\in \mathbb{C}$ are distinct points and that $z_j^a\in a\mathcal{T}$ satisfy $\lim_{a\to 0} z_j^a=z_j$ for $1\leq j\leq 3$. 
Then, according to~\eqref{eqn::cvg_spin_correlation} and~\eqref{eqn::three_point_energy_density}, we have 
\begin{align} %\label{eqn::def_hatphi_delta}
%	\begin{split}
&\lim_{a\to 0}\pi_a^{-4} \times \langle \mathcal{E}^{\delta}_{z_1^a}S_{z_2^a}S_{z_3^a}\rangle^a = \langle \psi(z_1^{(\delta,-)})\psi(z_1^{(\delta,+)})\psi(z_2)\psi(z_3)\rangle -\langle \psi(z_1^{(\delta,-)})\psi(z_1^{(\delta,+)})\rangle \langle\psi(z_2)\psi(z_3)\rangle, \label{eq::Eps_delta_S_S} \\
&\lim_{a\to 0} a^{-\frac{5}{4}}|\log a|^{-1} \pi_a^{-2} \langle \mathcal{E}^{\delta}_{z_1^a}\mathcal{E}_{z_2^a}\rangle^a = \langle \psi(z_1^{(\delta,-)})\psi(z_1^{(\delta,+)})\phi(z_2)\rangle=C_2F(z_2,z_1^{(\delta,-)},z_1^{(\delta,+)}), \label{eq::Eps_delta_Eps} \\
&\lim_{a\to 0}\pi_a^{-4}\times \langle \mathcal{E}_{z_1^a}^{\delta_1}\mathcal{E}^{\delta_2}_{z_2^a}\rangle^a \nonumber \\
& \qquad = \langle \psi (z_1^{(\delta_1,-)}) \psi(z_1^{(\delta_1,+)})\psi(z_2^{(\delta_2,-)})\psi(z_2^{(\delta_2,+)})\rangle -\langle \psi(z_1^{(\delta_1,-)})\psi(z_1^{(\delta_1,+)})\rangle 
\langle \psi(z_2^{(\delta_2,-)})\psi(z_2^{(\delta_2,+)})\rangle. \label{eq::Eps_delta_Eps_delta}	%\end{split}
\end{align}
%\begin{align} \label{eqn::def_hatphi_delta}
%	\begin{split}
%\langle \hat{\phi}^{\delta}(z_1)\psi(z_2)\psi(z_3)\rangle:=&\lim_{a\to 0}\pi_a^{-4} \times \langle \mathcal{E}^{\color{red}\delta}_{z_1^a}S_{z_2^a}S_{z_3^a}\rangle^a\\
%=& \langle \psi(z_1^{(\delta,-)})\psi(z_1^{(\delta,+)})\psi(z_2)\psi(z_3)\rangle -\langle \psi(z_1^{(\delta,-)})\psi(z_1^{(\delta,+)})\rangle \langle\psi(z_2)\psi(z_3)\rangle,\\
%\langle \hat{\phi}^{\delta}(z_1)\phi(z_2)\rangle:=&\lim_{a\to 0} a^{-\frac{5}{4}}|\log a|^{-1} \pi_a^{-2} \langle \mathcal{E}^{\delta}_{z_1^a}\mathcal{E}_{z_2^a}\rangle^a\\
%=&\langle \psi(z_1^{(\delta,-)})\psi(z_1^{(\delta,+)})\phi(z_2)\rangle=C_2F(z_2,z_1^{(\delta,-)},z_1^{(\delta,+)}),\\
%\langle \hat{\phi}^{\delta_1}(z_1)\hat{\phi}^{\delta_2}(z_2)\rangle:=& \lim_{a\to 0}\pi_a^{-4}\times \langle \mathcal{E}_{z_1^a}^{\delta_1}\mathcal{E}^{\delta_2}_{z_2^a}\rangle^a \\
%=& \langle \psi (z_1^{(\delta_1,-)}) \psi(z_1^{(\delta_1,+)})\psi(z_2^{(\delta_2,-)})\psi(z_2^{(\delta_2,+)})\rangle -\langle \psi(z_1^{(\delta_1,-)})\psi(z_1^{(\delta_1,+)})\rangle 
%\langle \psi(z_2^{(\delta_2,-)})\psi(z_2^{(\delta_2,+)})\rangle. 	\end{split}
%\end{align}

%Eq.~\eqref{eqn::discrete_log_partner} suggests that, in the discrete level, the field $\mathcal{E}^{\delta}$ is a mixture of the percolation energy field and the ``four-leg operator".    

Now let
\begin{align} \label{def::Edelta-renormalized}
    \eta^{\delta}_a(z^a) := \pi_a^{-2} \mathcal{E}^{\delta}_{z^a}.
\end{align}
and define the lattice \emph{logarithmic partner} of the energy field to be the following ``mixture'' of $\phi_a$ and $\eta^{\delta}_a$:
\begin{align} \label{def::log_partner}
    \hat{\phi}^{\delta}_a(z^a) := (2\delta)^{-\frac{25}{24}} \eta^{\delta}_a(z^a) + \frac{C_1C_L}{C_2}\log(2\delta)\phi_a(z^a).
\end{align}
{ We remark that the combination $\frac{C_1C_L}{C_2}$ is chosen for convenience and could be replaced by any other constant, as explained in Remark~\ref{rem::log_partner} below.}

\begin{theorem} \label{thm::mixed}
%	For the functions $\langle \hat{\phi}^{\delta}(z_1)\psi(z_2)\psi(z_3)\rangle$, $\langle\hat{\phi}^{\delta}(z_1)\phi(z_2)\rangle$ and $ \langle \hat{\phi}^{\delta_1}(z_1)\hat{\phi}^{\delta_2}(z_2)\rangle$ defined in~\eqref{eqn::def_hatphi_delta}, 
We have 
    \begin{align}
		& \langle \hat{\phi} (z_1)\psi (z_2)\psi(z_3)\rangle := \lim_{\delta\to 0}\lim_{a \to 0} \langle \hat{\phi}^{\delta}_a(z^a_1)\psi_a(z^a_2)\psi_a(z^a_3)\rangle^a = C_1\left(C_0-C_L \log \frac{|z_3-z_2|}{|z_3-z_1||z_2-z_1|}\right)F(z_1,z_2,z_3), 	\label{eqn::hat_psi_three_point}\\
		& \langle \hat{\phi}(z_1)\phi(z_2)\rangle := \lim_{\delta\to 0}\lim_{a \to 0}\langle \hat{\phi}^{\delta}_a(z^a_1)\phi_a(z^a_2)\rangle^a=C_2 |z_2-z_1|^{-\frac{5}{2}}, \label{eqn::hat_energy_two_point}\\
		& \langle \hat{\phi}(z_1)\hat{\phi}(z_2)\rangle := \lim_{\delta_1,\delta_2\to 0}\lim_{a \to 0} \langle \hat{\phi}^{\delta_1}_a(z^a_1)\hat{\phi}^{\delta_2}_a(z^a_2)\rangle^a = C_1 \frac{C_0+2C_L\log |z_2-z_1|}{|z_2-z_1|^{\frac{5}{2}}}, \label{eqn::hat_hat_two_point}
	\end{align}
%	\begin{align}
%		\langle \hat{\phi} (z_1)\psi (z_2)\psi(z_3)\rangle :=& \lim_{\delta\to 0} (2\delta)^{\color{red}-\frac{25}{24}} \times \Big(\langle \hat{\phi}^{\delta}(z_1)\psi(z_2)\psi(z_3)\rangle+ \tfrac{C_1C_L}{C_2} (2\delta)^{\frac{25}{24}}\log(2\delta)\langle \phi(z_1)\psi(z_2)\psi(z_3)\rangle\Big)
%	\notag\\
%		=&C_1\left(C_0-C_L \log \frac{|z_3-z_2|}{|z_3-z_1||z_2-z_1|}\right)F(z_1,z_2,z_3), 	\label{eqn::hat_psi_three_point}\\
%		\langle \hat{\phi}(z_1)\phi(z_2)\rangle:=&\lim_{\delta\to 0}	(2\delta)^{-\frac{25}{24}}\times \langle \hat{\phi}^{\delta}(z_1)\phi(z_2)\rangle=C_2 |z_2-z_1|^{-\frac{5}{2}}, \label{eqn::hat_energy_two_point}\\
%		\langle \hat{\phi}(z_1)\hat{\phi}(z_2)\rangle :=&\lim_{\delta_2\to 0}\lim_{\delta_1\to 0} (2\delta_2)^{-\frac{25}{24}} (2\delta_1)^{-\frac{25}{24}} \\
%	&	\quad\times \Big( \langle \hat{\phi}^{\delta_1}(z_1)\hat{\phi}^{\delta_2}(z_2)\rangle
%	+\tfrac{C_1C_L}{C_2}(2\delta_1)^{\frac{25}{24}}\big(\log (2\delta_1)+\log (2\delta_2)\big) \langle \phi(z_1)\psi(z_2^{(\delta_2,-)})\psi(z_2^{{(\delta_2,+)}})\rangle\Big)	\notag\\
%		=& \frac{C_0+2C_L\log |z_2-z_1|}{|z_2-z_1|^{\frac{5}{2}}}, \label{eqn::hat_hat_two_point}
%	\end{align}
where $C_1$, $C_0$ and $C_L$ are the constants in~\eqref{eqn::C_1} and~\eqref{eqn::OPE2}, and $C_2$ is the constant in~\eqref{eqn::three_point_energy_density}.  
\end{theorem}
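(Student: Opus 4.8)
The plan is to reduce every correlator in Theorem~\ref{thm::mixed} to spin-field quantities that are already in hand: the two-point function \eqref{eqn::C_1}, the four-point OPE \eqref{eqn::OPE2}, the energy three-point function \eqref{eqn::three_point_energy_density}, the vanishing energy two-point function \eqref{eqn::energy_two_point_new}, and the three reduction identities \eqref{eq::Eps_delta_S_S}--\eqref{eq::Eps_delta_Eps_delta}. In each case I expand $\hat{\phi}^{\delta}_a$ through its definition \eqref{def::log_partner} into the piece $(2\delta)^{-\frac{25}{24}}\eta^{\delta}_a$ and the piece $\frac{C_1C_L}{C_2}\log(2\delta)\,\phi_a$, send $a\to 0$ first (the only place the lattice enters, via \eqref{eq::Eps_delta_S_S}--\eqref{eq::Eps_delta_Eps_delta}), and only then send $\delta\to 0$. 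The recurring mechanism is that the $(2\delta)^{-\frac{25}{24}}\eta^{\delta}_a$ piece produces a $\log(2\delta)$ divergence through the OPE, which is cancelled exactly by the $\log(2\delta)\,\phi_a$ piece; the mixing coefficient $\frac{C_1C_L}{C_2}$ in \eqref{def::log_partner} is chosen precisely to make this cancellation work.

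For \eqref{eqn::hat_psi_three_point}, after $a\to 0$ the $\eta^{\delta}_a$ piece gives, by \eqref{eq::Eps_delta_S_S}, the connected four-point function obtained by fusing the pair $z_1^{(\delta,-)},z_1^{(\delta,+)}$ (at separation $2\delta$) to $z_1$. Applying \eqref{eqn::OPE2} to this pair, the leading $|z_3-z_2|^{-\frac{5}{24}}$ term cancels against the disconnected product $\langle\psi(z_1^{(\delta,-)})\psi(z_1^{(\delta,+)})\rangle\langle\psi(z_2)\psi(z_3)\rangle$, leaving $C_1(2\delta)^{\frac{25}{24}}F(z_1,z_2,z_3)\big(C_0-C_L\log|\cdots|\big)$. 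Multiplying by $(2\delta)^{-\frac{25}{24}}$ and expanding the logarithm isolates a term $-C_1C_LF(z_1,z_2,z_3)\log(2\delta)$; this is exactly cancelled by the $\log(2\delta)\,\phi_a$ piece, whose $a\to 0$ limit is $C_1C_L\log(2\delta)F(z_1,z_2,z_3)$ by \eqref{eqn::three_point_energy_density}. The remaining expression is finite in $\delta$ and, as $\delta\to 0$, collapses to \eqref{eqn::hat_psi_three_point}.

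For \eqref{eqn::hat_energy_two_point}, the $\phi_a\phi_a$ cross-term vanishes as $a\to 0$ by \eqref{eqn::energy_two_point_new}, so only the $\eta^{\delta}_a\phi_a$ term survives; its $a\to 0$ limit is $C_2F(z_2,z_1^{(\delta,-)},z_1^{(\delta,+)})$ by \eqref{eq::Eps_delta_Eps}, and since this $F$ carries a factor $(2\delta)^{\frac{25}{24}}$, multiplying by $(2\delta)^{-\frac{25}{24}}$ and sending $\delta\to 0$ gives $C_2|z_2-z_1|^{-\frac52}$. For \eqref{eqn::hat_hat_two_point} I expand $\hat{\phi}^{\delta_1}_a(z^a_1)\hat{\phi}^{\delta_2}_a(z^a_2)$ into four terms. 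The $\phi_a\phi_a$ term again vanishes by \eqref{eqn::energy_two_point_new}. The two mixed terms reduce, exactly as in the previous step via \eqref{eq::Eps_delta_Eps}, to $C_1C_L\log(2\delta_2)\,Q(\delta_1)$ and $C_1C_L\log(2\delta_1)\,Q(\delta_2)$, where $Q(\delta):=|z_2-z_1+\delta|^{-\frac54}|z_2-z_1-\delta|^{-\frac54}$. The $\eta^{\delta_1}_a\eta^{\delta_2}_a$ term reduces by \eqref{eq::Eps_delta_Eps_delta} to a connected four-point function, to which I apply \eqref{eqn::OPE2} by fusing the pair $z_1^{(\delta_1,-)},z_1^{(\delta_1,+)}$; after dividing by $(2\delta_1)^{\frac{25}{24}}(2\delta_2)^{\frac{25}{24}}$ this yields $C_1Q(\delta_2)\big(C_0-C_L[\log(2\delta_1)+\log(2\delta_2)-\log|z_2-z_1-\delta_2+\delta_1|-\log|z_2-z_1+\delta_2-\delta_1|]\big)$. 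Summing the four contributions, the $\log(2\delta_1)$ divergence cancels exactly against the mixed term $C_1C_L\log(2\delta_1)Q(\delta_2)$, while the $\log(2\delta_2)$ divergence combines with the other mixed term into $C_1C_L\log(2\delta_2)\,[Q(\delta_1)-Q(\delta_2)]$; sending $\delta_1,\delta_2\to 0$ leaves $C_1(C_0+2C_L\log|z_2-z_1|)|z_2-z_1|^{-\frac52}$, which is \eqref{eqn::hat_hat_two_point}.

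The delicate step is the $\eta^{\delta_1}_a\eta^{\delta_2}_a$ term, where both pairs must fuse. The OPE \eqref{eqn::OPE2} only controls the fusion of one pair with the spectator pair held fixed, so its remainder is $o((2\delta_1)^{\frac{25}{24}})$ as $\delta_1\to 0$ for fixed $\delta_2$, not uniformly in $\delta_2$. I therefore take the limits iteratively, $\delta_1\to 0$ first: then the factor $F(z_1,z_2^{(\delta_2,-)},z_2^{(\delta_2,+)})=Q(\delta_2)(2\delta_2)^{\frac{25}{24}}$ supplies the compensating $(2\delta_2)^{\frac{25}{24}}$, so the divided remainder is $(2\delta_2)^{-\frac{25}{24}}o((2\delta_1)^{\frac{25}{24}})\to 0$; moreover $Q(\delta_1)\to|z_2-z_1|^{-\frac52}=Q(0)$, so the leftover $\log(2\delta_2)[Q(\delta_1)-Q(\delta_2)]$ becomes $\log(2\delta_2)\,[Q(0)-Q(\delta_2)]$, and since $Q$ is even in $\delta$ with $Q(0)-Q(\delta_2)=O(\delta_2^2)$ this tends to $0$. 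Because the exact pre-limit expression is symmetric under $(\delta_1,z_1)\leftrightarrow(\delta_2,z_2)$, the reversed iteration gives the same value, so $\lim_{\delta_1,\delta_2\to 0}$ is unambiguous. I expect this interchange-of-limits / double-fusion bookkeeping to be the main technical obstacle; the remaining algebra, including the tracking of the cancelling logarithms, is routine.
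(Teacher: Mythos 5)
Your treatment of \eqref{eqn::hat_psi_three_point} and \eqref{eqn::hat_energy_two_point} is correct and coincides with the paper's: both follow by expanding \eqref{def::log_partner}, applying the reduction identities \eqref{eq::Eps_delta_S_S}--\eqref{eq::Eps_delta_Eps}, and letting the $\log(2\delta)\,\phi_a$ counterterm cancel the logarithm produced by \eqref{eqn::OPE2}, respectively observing that the $\phi_a\phi_a$ cross-term dies by \eqref{eqn::energy_two_point}.

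For \eqref{eqn::hat_hat_two_point} you take a genuinely different route. The paper does not iterate the $\delta$-limits: it first establishes a \emph{joint} asymptotic expansion, \eqref{eqn::OPE3}, valid as $\delta_1,\delta_2\to 0$ simultaneously with error $o\big((2\delta_1)^{\frac{25}{24}}(2\delta_2)^{\frac{25}{24}}\big)$, by re-running the annulus decomposition of Section~\ref{sec::sketch_four_spin} with \emph{two} families of concentric disks ($M_1\sim-\log\delta_1$ of them around $z_1$ and $M_2\sim-\log\delta_2$ around $z_2$), and then identifies $C_L^*=C_L$, $C_0^*=C_0$ by comparison with \eqref{eqn::OPE2}. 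You instead use only \eqref{eqn::OPE2}, fusing one pair while the other is a fixed spectator, and take the limits iteratively, $\delta_1\to 0$ first. Your bookkeeping for that iterated limit is correct: the $\log(2\delta_1)$ terms cancel exactly, and the leftover $C_1C_L\log(2\delta_2)\big[Q(\delta_1)-Q(\delta_2)\big]$, with $Q(\delta):=|z_2-z_1+\delta|^{-\frac54}|z_2-z_1-\delta|^{-\frac54}$, indeed tends to $0$ after $\delta_1\to 0$ since $Q(0)-Q(\delta_2)=O(\delta_2^2)$. The attraction of your route is that it needs no percolation input beyond \eqref{eqn::OPE2}; the price is that it only controls iterated limits.

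That price is where the genuine gap lies. Your final claim --- that since the pre-limit expression is symmetric under $(\delta_1,z_1)\leftrightarrow(\delta_2,z_2)$ and both iterated limits agree, ``$\lim_{\delta_1,\delta_2\to 0}$ is unambiguous'' --- rests on a false principle: equality of the two iterated limits does not imply existence of the two-variable limit (compare $\delta_1\delta_2/(\delta_1^2+\delta_2^2)$, whose iterated limits at the origin both vanish while the joint limit fails to exist). Along a joint sequence such as $\delta_2=e^{-1/\delta_1^4}$, neither application of \eqref{eqn::OPE2} is legitimate, because its remainder is $o\big((2\delta_1)^{\frac{25}{24}}\big)$ only for \emph{fixed} spectator points, and symmetry gives you no control there. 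This uniformity in the double fusion is precisely what \eqref{eqn::OPE3} supplies, and it is obtained in the paper by new probabilistic work (the two-family annulus decomposition), not by an interchange-of-limits argument. So, as written, your proposal proves the weaker iterated-limit version of \eqref{eqn::hat_hat_two_point}; to recover the statement as the paper proves it, you would need to supply a joint estimate of the type \eqref{eqn::OPE3} --- you correctly identified the double-fusion step as the main obstacle, but it cannot be dissolved by bookkeeping alone.
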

\begin{proof}
Equation~\eqref{eqn::hat_psi_three_point} follows from~\eqref{eqn::C_1},~\eqref{eqn::OPE2},~\eqref{eqn::three_point_energy_density} and~\eqref{eq::Eps_delta_S_S}, while~\eqref{eqn::hat_energy_two_point} is a direct consequence of~\eqref{eqn::energy_two_point} and~\eqref{eq::Eps_delta_Eps}.
It remains to prove~ \eqref{eqn::hat_hat_two_point}.

Using the strategy in the proof of~\eqref{eqn::OPE2}, which is sketched in Section~\ref{sec::sketch_four_spin}, one can show that there exist constants $C_L^*\in (0,\infty)$ and $C_0^{*}\in \mathbb{R}$ such that, as $\delta_1,\delta_2\to 0$, 
\begin{align} \label{eqn::OPE3}
\begin{split}
		&	\big\langle\psi\big(z_1^{(\delta_1,-)}\big)\psi\big(z_1^{(\delta_1,+)}\big) \psi\big(z_2^{(\delta_2,-)}\big)\psi\big(z_2^{(\delta_2,+)}\big)\big\rangle\\
			&\qquad= C_1 \big\vert z_1^{(\delta_1,+)}-z_1^{(\delta_1,-)}\big\vert^{-\frac{5}{24}} \big\vert z_2^{(\delta_2,+)}-z_2^{(\delta_2,-)}\big\vert^{-\frac{5}{24}}\\
	&\qquad	\quad-C_1C_L^* \big\vert z_1^{(\delta_1,+)}-z_1^{(\delta_1,-)}\big\vert^{\frac{25}{24}}F\Big(z_1,z_2^{(\delta_2,+)},z_2^{(\delta_2,-)}\Big) \log \Big\vert\tfrac{\big(z_1^{(\delta_1,+)}-z_1^{(\delta_1,-)}\big)\big(z_2^{(\delta_2,+)}-z_2^{(\delta_2,-)}\big)}{\big(z_2^{(\delta_2,+)}-z_1^{(\delta_1,+)}\big)\big(z_2^{(\delta_2,-)}-z_1^{(\delta_1,-)}\big)}\Big\vert\\
	&\qquad\quad+C_1C_0^*|z_1^{(\delta_1,+)}-z_1^{(\delta_1,-)}|^{\frac{25}{24}}F\Big(z_1,z_2^{(\delta_2,+)},z_2^{(\delta_2,-)}\Big)\\
	&\qquad\quad+o\Big(\big\vert z_1^{(\delta_1,+)}-z_1^{(\delta_1,-)}\big\vert^{\frac{25}{24}}\big\vert z_2^{(\delta_2,+)}-z_2^{(\delta_2,-)}\big\vert^{\frac{25}{24}}\Big).
\end{split}
\end{align}
Compared with the proof of~\eqref{eqn::OPE2}, the main difference here is that we have to use two collections of disks, $B_m^{(1)}:=\{w: |w-z_1|\leq 2^m(2\delta_1)\}$ for $1\leq m\leq  M_1$ and $B_m^{(2)}:=\{w: |w-z_2|\leq 2^m(2\delta_2)\}$ for $1\leq m\leq M_2$, instead of one, where $M_1\sim -\log \delta_1$ and $M_2\sim -\log \delta_2$.

Comparing~\eqref{eqn::OPE3} with~\eqref{eqn::OPE2}, we obtain
\begin{align}
C_L^*=C_L,\quad \text{and}\quad C_0^*=C_0.
\end{align}
Combining these two observations with~\eqref{eqn::C_1},~\eqref{eqn::energy_two_point},~\eqref{eq::Eps_delta_Eps} and~\eqref{eq::Eps_delta_Eps_delta} gives~\eqref{eqn::hat_hat_two_point}, as desired. 
\end{proof}

\begin{remark}
    Theorems~\ref{thm::energy_spin} and~\ref{thm::mixed} show that the correlations of the energy field and its logarithmic partner have the structure expected of the correlations of a \emph{logarithmic pair} in a logarithmic CFT (see~\cite{Gur93} and~\cite{creutzig2013logarithmic}). {From an algebraic perspective, a primary field is an eigenvector of the dilation operator corresponding to the highest-weight vector of a representation of the Virasoro algebra, and its two- and three-point functions are fixed by conformal invariance, up to multiplicative constants. In the case of a logarithmic CFT, there are highest-weight representations where the dilation operator is not diagonalizable and, therefore, when expressed as a matrix in Jordan form, contains Jordan blocks. In such a situation, an eigenvector, corresponding to a primary field, is accompanied by one or more Jordan partners. In the simplest case of a single Jordan partner, it is possible to check that conformal invariance fixes the functional form of the two-point functions of the primary field and its partner to be as in Theorems~\ref{thm::energy_spin} and~\ref{thm::mixed} (see, for example, Section~1.1 of~\cite{creutzig2013logarithmic}). The appearance of a logarithm in~\eqref{eqn::hat_hat_two_point} explains why the Jordan partner of the primary field is called logarithmic partner.}
\end{remark}

\begin{remark} \label{rem::log_partner}
    {Because of~\eqref{eqn::energy_two_point}, the functional form of the two-point functions~\eqref{eqn::hat_energy_two_point} and~\eqref{eqn::hat_hat_two_point} would not change if we changed the constant $\frac{C_1C_L}{C_0}$ in~\eqref{def::log_partner}. Therefore, the logarithmic partner is not unique: it is defined up to a constant that determines the amount of ``mixing'' with the energy field (see, for example, the discussion after Eq.~(1.10) of~\cite{creutzig2013logarithmic}). Our choice in~\eqref{def::log_partner} is one of convenience, since it gives a simple expression for the three-point function in~\eqref{eqn::hat_psi_three_point}.}
\end{remark}

For the reader's convenience, we end this section with a heuristic discussion of the meaning of the fields introduced above.
The density or spin field defined in~\eqref{def:lattice-field} and~\eqref{def::renormalized-spin} corresponds to the presence of a one-arm event at a particular location. In CFT parlance, one would say that a one-arm event is ``inserted'' at a point in space.
This explains the factor $\pi_a^{-1}$ in the renormalized version~\eqref{def::renormalized-spin} of the field.
To understand this, consider the $n$-point function $\langle\psi_a(z^a_1)\ldots\psi_a(z^a_n)\rangle^a$.
If the cluster of at least one of the hexagons $z^a_i$ contains no other $z^a_j$, then by symmetry, averaging over the values of $S_{z^a_i}$ gives $0$ (and $n$ must be even).
Hence, for the $n$-point function to be nonzero, each hexagon $z^a_i$ must be connected to at least one other hexagon $z^a_j$, producing a one-arm event at each hexagon involved in the $n$-point function.

In the case of an $n$-point function containing the energy field (see~\eqref{def::energy} and~\eqref{def::normalized-energy}), the contribution coming from configurations in which $z^{(a,-)}$ and $z^{(a,+)}$ are connected by a ``microscopic'' path in the scaling limit, which would produce a divergence, is canceled by removing the mean. The effect of this is that the configurations that contribute to the $n$-point function contain a four-arm event at $z^a$, which explains the factor $a^{-5/4}$ in~\eqref{def::normalized-energy}, where $\frac{5}{4}$ is the four-arm exponent~\cite{SmirnovWernerCriticalExponents}.
The presence of the logarithm in the normalization factor is less intuitive and more complex; it can be explained, as shown in the proof of~\eqref{eqn::three_point_energy_density} given in Section~\ref{subsec::mixed_cor} below, by the same mechanism identified in~\cite{CamiaFeng2024logarithmic,CF24} to explain the logarithmic divergence in the four-point function $\langle \psi(z_1)\psi(z_2)\psi(z_3)\psi(z_4)\rangle$ (see~\eqref{eqn::OPE1}).

The lattice field defined in~\eqref{def::Edelta} (or its renormalized version~\eqref{def::Edelta-renormalized}) is a non-local version of the energy field. The additional cutoff $\delta>0$ is at first kept fixed while $a \to 0$. This corresponds to the ``insertion'' of two spin fields, or equivalently two one-arm events, as explained above, at $z^{(a,\delta,-)}$ and $z^{(a,\delta,+)}$, which explains the renormalization factor $\pi_a^{-2}$ in~\eqref{def::Edelta-renormalized}. 

The factor $(2\delta)^{-\frac{25}{24}}$ in the normalization of $\hat\phi^{\delta}_a$ in front of $\eta^{\delta}_a$ can be understood by observing that $\langle \psi(z^{(\delta,-)})\psi(z^{(\delta,+)})\rangle \sim (2\delta)^{-\frac{5}{24}}$, while the ``insertion'' of two separate one-arm events at $z^{(\delta,-)}$ and $z^{(\delta,+)}$ produces a four-arm event whose probability goes to zero like $(2\delta)^{\frac{5}{4}}$ as $\delta \to 0$. 
The additional term containing the energy field and the logarithm of $2\delta$ is needed to obtain the correlations in Theorem~\ref{thm::mixed} and is related to the asymptotic expansion~\eqref{eqn::OPE2}.
In other words, while the ``mixing'' of $\eta^{\delta}_a$ and $\phi_a$ in~\eqref{def::log_partner}, with the corresponding coefficients, may appear arbitrary and complicated, it is natural given the expansion~\eqref{eqn::OPE2} and is precisely what is needed to yield the expected structure of mixed correlation functions between a field (the energy field, in this case) and its logarithmic partner (see, e.g., \cite{creutzig2013logarithmic}).

\section{Preliminary results}
%Sketch of the proof of the asymptotic expansion~\eqref{eqn::OPE2}}
\label{sec::sketch_four_spin}

\subsection{Percolation interfaces and their scaling limit}
We briefly recall the definition of percolation interfaces, which are curves that separate black and white clusters. %The scaling limit of the full collection of interfaces for critical site percolation on the triangular lattice is obtained in~\cite{CamiaNewmanPercolationFull}.
For critical site percolation on the triangular lattice, {building on Smirnov's seminal paper~\cite{Smirnov:Critical_percolation_in_the_plane}, the full collection of such interfaces was shown in~\cite{CamiaNewmanPercolationFull}} to have a weak limit as the lattice spacing is sent to zero.

We let $a>0$ and consider critical Bernoulli site percolation on $a\mathcal{T}$. Given a percolation configuration, the percolation interfaces between black and white clusters are polygonal circuits (with probability one) composed of edges of the hexagonal lattice $a\mathcal{H}$ dual to the triangular lattice $a\mathcal{T}$. We view these circuits as curves in the complex plane $\mathbb{C}$ and give them an orientation in such a way that they wind counterclockwise around black clusters and clockwise around white clusters (in other words, they are oriented in such a way that black hexagons are on the left and white hexagons are on the right). Note that the interfaces form a nested collection of loops with alternating orientations and a natural tree structure. 

In order to state the weak convergence of the collection of percolation interfaces, we need to specify a topology on the space of collections of loops. First, we introduce a distance function $\Delta$ on $\mathbb{C}\times \mathbb{C}$,
%\begin{equation*}
%	\Delta(u,v):=\inf_{f} \int_0^1 \frac{|f'(t)|^2}{1+|f'(t)|^2}\ud t,
%\end{equation*}
%{\color{red} \tt Federico: I think this should be}
\begin{equation*}
	\Delta(u,v):=\inf_{f} \int_0^1 \frac{|f'(t)|}{1+|f(t)|^2}\ud t,
\end{equation*}
where the infimum is over all differentiable curves $f: [0,1]\to \mathbb{C}$ with $f(0)=u$ and $f(1)=v$. 
Second, for two planar oriented curves $\gamma_1,\gamma_2:[0,1]\to \mathbb{C}$, we define
\begin{equation}  \label{eqn::curve_metric}
	\dist\left(\gamma_1,\gamma_2\right):=\inf_{\psi,\tilde{\psi}}\sup_{t\in[0,1]} \Delta\left( \gamma_1(\psi(t)), \gamma_2(\tilde{\psi}(t))\right),
\end{equation}
where the infimum is taken over all increasing homeomorphisms $\psi,\tilde{\psi}:[0,1]\to [0,1]$. Note that planar oriented loops can be viewed as planar oriented curves. Third, we define a distance between two closed sets of loops, $\Gamma_1$ and $\Gamma_2$, as follows:
\begin{equation} \label{eqn::loop_metric}
	\mathrm{Dist}(\Gamma_1,\Gamma_2):=\inf\{\epsilon>0: \forall \gamma_1\in \Gamma_1 \enspace \exists \gamma_2\in \Gamma_2 \text{ such that }\dist(\gamma_1,\gamma_2)\leq \epsilon \text{ and vice versa}\}. 
\end{equation}
The space $X$ of collections of loops with this distance is a separable metric space.

It was shown in~\cite{CamiaNewmanPercolationFull} that, as $a\to 0$, the collection of percolation interfaces has a unique limit in distribution in the topology induced by~\eqref{eqn::loop_metric}. We denote this limit by $\Lambda$ and call it the \textit{full scaling limit of percolation}.
%We call this limit the \textit{full scaling limit of percolation} and let $\mathbb{P}$ denote its distribution. We let $\Lambda$ denote a loop configuration distributed according to $\mathbb{P}$.  
As explained in~\cite{SLECLE6}, $\Lambda$ is distributed like the full-plane, nested $\mathrm{CLE}_6$. It is invariant, in a distributional sense, under all non-constant M\"obius transformations~\cite{CamiaNewmanPercolationFull,GwynneMillerQianCLE}.

For $z\in \mathbb{C}$ and $0<r<R$, we denote by $\mathcal{F}_{r,R}(z)$ the event that there are four {monochromatic paths that cross the annulus $B_{R}(z)\setminus B_{r}(z)$ producing an alternating pattern (black/white/black/white).} One can express the event $\mathcal{F}_{r,R}(z)$ in terms of percolation interface loops. More precisely, $\mathcal{F}_{r,R}(z)$ means that there are four distinct segments of interface loops, with alternating orientations, crossing the annulus $B_{R}(z)\setminus B_{r}(z)$. Using the loop definition of $\mathcal{F}_{r,R}(z)$ and the percolation full scaling limit $\Lambda$ in terms of interface loops given in~\cite{CamiaNewmanPercolationFull}, we can define the analog of $\mathcal{F}_{r,R}(z)$ in the continuum, which we still denote by $\mathcal{F}_{r,R}(z)$. Moreover, since the polychromatic boundary $3$-arm exponent for critical site percolation is strictly larger than $1$~\cite{SmirnovWernerCriticalExponents}, the event $\mathcal{F}_{r,R}(z)$ is a continuity event for the law of $\Lambda$.

\subsection{The four-point function of the density field $\psi$}
%Sketch of the proof of~\eqref{eqn::OPE2}}

In~\cite{CF24}, the asymptotic expansion~\eqref{eqn::OPE1} is proved, but the same techniques can be used to prove the slightly stronger version~\eqref{eqn::OPE2}, which is used in the proof of Theorem~\ref{thm::mixed}. Since the ideas needed to prove~\eqref{eqn::OPE2} will also be used in the proof of Theorem~\ref{thm::energy_spin}, below we provide a sketch of the proof of~\eqref{eqn::OPE2}. For details of some technical steps, we refer to~\cite[Section~3]{CF24}.

Let $z_1,z_2,z_3,z_4\in \mathbb{C}$ be distinct points and assume that $z_j^a\in a\mathcal{T}$ satisfy $\lim_{a\to 0}z_j^a=z_j$ for $1\leq j\leq 4$. A simple calculation shows that 
\begin{align} \label{eq::4pf-connection_probs}
		\langle S_{z_1^a}\cdots S_{z_4^a}\rangle^a=&\mathbb{P}^a\Big[z_1^a\xleftrightarrow{B}z_2^a,\; z_3^a\xleftrightarrow{B}z_4^a\Big]+\mathbb{P}^a\Big[z_1^a\xleftrightarrow{B}z_3^a\centernot{\xleftrightarrow{B}}z_2^a\xleftrightarrow{B} z_4^a\Big]+\mathbb{P}^a\Big[z_1^a\xleftrightarrow{B}z_4^a\centernot{\xleftrightarrow{B}}z_2^a\xleftrightarrow{B}z_3^a\Big], 
\end{align}
where $\{ z_i^a\xleftrightarrow{B}z_j^a \}$ denotes the event that $z^a_i$ and $z^a_j$ belong to the same black cluster and $\{ z_i^a\centernot{\xleftrightarrow{B}}z_j^a \}$ denotes its complement.
Then, one can write
\begin{align}\label{eqn::spin_four_connec_discrete}
\begin{split}
	& \langle \psi(z_1)\cdots \psi(z_4)\rangle=\lim_{a\to 0}\pi_a^{-4}\times \langle S_{z_1^a}\cdots S_{z_4^a}\rangle^a\\
	& \quad = P\Big[z_1\xleftrightarrow{B}z_3,z_2\xleftrightarrow{B}z_4\Big]+ P\Big[z_1\xleftrightarrow{B}z_3\centernot{\xleftrightarrow{B}}z_2\xleftrightarrow{B}z_4\Big]+ P\Big[z_1\xleftrightarrow{B}z_4\centernot{\xleftrightarrow{B}} z_2\xleftrightarrow{B}z_3\Big],
\end{split}
\end{align}
where the existence of the limits
\begin{align*}    P\Big[z_1\xleftrightarrow{B}z_2,z_3\xleftrightarrow{B}z_4\Big]:=&\lim_{a\to 0} \pi_a^{-4}\times \mathbb{P}^a\Big[z_1^a\xleftrightarrow{B} z_2^a,\;z_3^a\xleftrightarrow{B}z_4^a\Big],\\
P\Big[z_1\xleftrightarrow{B}z_3\centernot{\xleftrightarrow{B}}z_2\xleftrightarrow{B}z_4\Big]:=&\lim_{a\to 0}\pi_a^{-4}\times\mathbb{P}^a\Big[z_1^a\xleftrightarrow{B}z_3^a\centernot{\xleftrightarrow{B}}z_2^a\xleftrightarrow{B} z_4^a\Big],\\
P\Big[z_1\xleftrightarrow{B}z_4\centernot{\xleftrightarrow{B}} z_2\xleftrightarrow{B}z_3\Big]:=&\lim_{a\to 0} \pi_a^{-4}\times \mathbb{P}^a\Big[z_1^a\xleftrightarrow{B}z_4^a\centernot{\xleftrightarrow{B}}z_2^a\xleftrightarrow{B}z_3^a\Big],
\end{align*}
was proved in~\cite[Proof of Theorem~1.5]{Cam23}.

%and where the definitions of the remaining two terms on the right-hand side of~\eqref{eqn::spin_four_connec_discrete} are similar.

We fix $z_3,z_4$ and $z\in \mathbb{C}\setminus \{z_3,z_4\}$. We will eventually let $z_1,z_2\to z$. Choose $\epsilon>0$ such that $B_{2\epsilon}(z)\cap \{z_3,z_4\}=\emptyset$.
Note that, when $|z_2-z_1|$ is small enough, the event $\{z_1\xleftrightarrow{B}z_3\centernot{\xleftrightarrow{B}}z_2\xleftrightarrow{B}z_4\}$ implies that there are four paths with alternating labels (black/white) crossing the annulus $B_{\epsilon}(z)\setminus B_{|z_2-z_1|}(z)$.
%; we denote by $\mathcal{F}_{|z_2-z_1|,\epsilon}(z)$ the latter event. 
Since the four-arm exponent equals $\frac{5}{4}$ and the one-arm exponent equals $\frac{5}{48}$~\cite{SmirnovWernerCriticalExponents}, one can show that 
\begin{align} \label{eq::13_24}
P\Big[z_1\xleftrightarrow{B}z_3\centernot{\xleftrightarrow{B}}z_2\xleftrightarrow{B}z_4\Big] \asymp |z_2-z_1|^{-\frac{5}{24}+\frac{5}{4}},\quad \text{as }z_1,z_2\to z.
\end{align}
This can be understood, for example, using (2.47) of \cite{Cam23} with $\varepsilon$ of the order of $\vert z_2-z_1 \vert$ and $G$ representing a four-arm event, or using the arguments sketched in~\cite[Section~2.3]{Cam23}. Intuitively, the exponent $\frac{5}{4}$ arises because the event
\[
\{z_1\xleftrightarrow{B}z_3\centernot{\xleftrightarrow{B}}z_2\xleftrightarrow{B}z_4\}
\]
implies the occurrence of a four-arm event centered at $\tfrac{z_1+z_2}{2}$, whereas the exponent $\frac{5}{24}$ comes from the normalization by two one-arm events, centered at $z_1$ and $z_2$, respectively.
%{\color{red} Federico: It is not clear from the statement of~\cite[Lemma~3.5]{CF24} that it is relevant here, so I added some other references that are more obviously relevant.}
In order to improve~\eqref{eqn::OPE1} and obtain~\eqref{eqn::OPE2}, we need to strengthen~\cite[Lemma~3.2]{CF24} and show that
%One can proceed as in {\color{red}\cite[Proof of Theorem~1.4 (1)]{CF24}} to show that
\begin{align} \label{eqn::four_spin_unpair}
	\lim_{z_1,z_2\to z}|z_2-z_1|^{-\frac{25}{24}}\times 	P\Big[z_1\xleftrightarrow{B}z_3\centernot{\xleftrightarrow{B}}z_2\xleftrightarrow{B}z_4\Big] \in (0,\infty).
\end{align}
To that end, for any $\epsilon>0$ such that $B_{\epsilon}(\frac{z_1+z_2}{2})\cap\{ z_3,z_4 \}=\emptyset$, we define
{\small\begin{align*}
P\Big[\mathcal{F}_{2|z_1-z_2|,\epsilon}\Big(\frac{z_1+z_2}{2}\Big)\Big]:=&\,\lim_{a\to 0}\mathbb{P}^a\Big[\mathcal{F}_{2|z_1^a-z_2^a|,\epsilon}\Big(\frac{z_1^a+z_2^a}{2}\Big)\Big],\\
P\Big[z_1\xleftrightarrow{B}z_3\centernot{\xleftrightarrow{B}}z_2\xleftrightarrow{B}z_4 \big| \mathcal{F}_{2| z_1-z_2 |,\epsilon}\Big(\frac{z_1+z_2}{2}\Big)\Big]:=&\,\lim_{a\to 0} \pi_a^{-4}\times\mathbb{P}^a\Big[z_1^a\xleftrightarrow{B}z_3^a\centernot{\xleftrightarrow{B}}z_2^a\xleftrightarrow{B}z_4^a \big| \mathcal{F}_{2| z_1^a-z_2^a |,\epsilon}\Big(\frac{z_1^a+z_2^a}{2}\Big)\Big]\\
	=& \, P\Big[z_1\xleftrightarrow{B}z_3\centernot{\xleftrightarrow{B}}z_2\xleftrightarrow{B}z_4\Big]/ P\Big[\mathcal{F}_{2|z_1-z_2|,\epsilon}\Big(\frac{z_1+z_2}{2}\Big)\Big],
\end{align*}}where the existence of the first limit follows directly from the convergence of the collection of percolation interfaces~\cite{CamiaNewmanPercolationFull}. Then we can write 
\begin{align}
\begin{split} \label{eqn::four_spin_unpair_aux}
	& \lim_{z_1,z_2\to z}|z_2-z_1|^{-\frac{25}{24}} \times P\Big[z_1\xleftrightarrow{B}z_3\centernot{\xleftrightarrow{B}}z_2\xleftrightarrow{B}z_4\Big] \\
    & \quad = \lim_{z_1,z_2\to z}|z_2-z_1|^{\frac{5}{24}} \times P\Big[z_1\xleftrightarrow{B}z_3\centernot{\xleftrightarrow{B}}z_2\xleftrightarrow{B}z_4 \vert \mathcal{F}_{2| z_1-z_2 |,\epsilon}\Big(\frac{z_1+z_2}{2}\Big)\Big] \\
    & \qquad \times\lim_{z_1,z_2\to z}|z_2-z_1|^{-\frac{5}{4}} \times P\Big[ \mathcal{F}_{2| z_1-z_2 |,\epsilon}\Big(\frac{z_1+z_2}{2}\Big) \Big],
\end{split}
\end{align}
where the existence of the last limit follows from~\cite[Corollary~2.4]{CF24}.
The existence of the other limit on the right-hand side of~\eqref{eqn::four_spin_unpair_aux} can be derived as follows. %{\color{red}\cancel{It is shown in~\cite{GarbanPeteSchrammPivotalClusterInterfacePercolation,Cam23} that there exists a constant $C_1\in (0,\infty)$ such that }
%\begin{align}\label{eqn::two_point_spin}
	%	\cancel{P\Big[z_1\xleftrightarrow{B}z_2\Big]:=\lim_{a\to 0}\pi_a^{-2}\times \mathbb{P}^a\Big[z_1^a\xleftrightarrow{B}z_2^a\Big]=\langle\psi(z_1)\psi(z_2)\rangle=\sqrt{C_1} |z_2-z_1|^{-\frac{5}{24}}.}
%\end{align}}

\begin{lemma}\label{lem::four_arm_coupling_argument}
  Let $\{z_1^{(n)}\}_{n\geq 1}$ and $\{z_2^{(n)}\}_{n\geq 1}$ be two sequences of complex numbers satisfying $z_1^{(n)}\neq z_2^{(n)}$ and $z_1^{(n)}\to z$, $z_2^{(n)}\to z$ as $n\to \infty$. Then
    	\begin{align*}
		\left\{|z_2^{(n)}-z_1^{(n)}|^{\frac{5}{24}}\times P\Big[z^{(n)}_1\xleftrightarrow{B}z_3\centernot{\xleftrightarrow{B}}z^{(n)}_2\xleftrightarrow{B}z_4 \vert \mathcal{F}_{2| z^{(n)}_1-z^{(n)}_2 |,\epsilon}\Big(\frac{z^{(n)}_1+z^{(n)}_2}{2}\Big)\Big]\right\}_{n=1}^{\infty}
	\end{align*}
    is a Cauchy sequence.
\end{lemma}
\noindent Lemma~\ref{lem::four_arm_coupling_argument} can be proved by adapting the proof of~\cite[Lemma~2.3]{CF24}. Since this argument will be used repeatedly later in the article, we provide a proof of Lemma~\ref{lem::four_arm_coupling_argument} in the next section.
 
Lemma~\ref{lem::four_arm_coupling_argument} gives the existence of the limit
\begin{equation*}
\lim_{z_1,z_2\to z}|z_2-z_1|^{\frac{5}{24}} \times P\Big[z_1\xleftrightarrow{B}z_3\centernot{\xleftrightarrow{B}}z_2\xleftrightarrow{B}z_4 \vert \mathcal{F}_{2| z_1-z_2 |,\epsilon}\Big(\frac{z_1+z_2}{2}\Big)\Big].    
\end{equation*}
Similarly, one can show that
\begin{align} \label{eqn::lim1-4/2-3}
	\lim_{z_1,z_2\to z} |z_2-z_1|^{-\frac{25}{24}} \times P\Big[z_1\xleftrightarrow{B}z_4\centernot{\xleftrightarrow{B}}z_2\xleftrightarrow{B} z_3\Big] \in (0,\infty). 
\end{align}

It remains to treat the term $P\Big[z_1\xleftrightarrow{B}z_2,z_3\xleftrightarrow{B}z_4\Big]$.
The necessary ingredients to complete the proof of~\eqref{eqn::OPE2} are in~\cite[Proof of Lemma~3.1]{CF24}. Below we provide some details for the reader's convenience. It is shown in~\cite{GarbanPeteSchrammPivotalClusterInterfacePercolation,Cam23} that there exists a constant $C_1\in (0,\infty)$ such that 
\begin{align}\label{eqn::two_point_spin}
		P\Big[z_1\xleftrightarrow{B}z_2\Big]:=\lim_{a\to 0}\pi_a^{-2}\times \mathbb{P}^a\Big[z_1^a\xleftrightarrow{B}z_2^a\Big]=\langle\psi(z_1)\psi(z_2)\rangle=\sqrt{C_1} |z_2-z_1|^{-\frac{5}{24}}.
\end{align}
Using~\eqref{eqn::two_point_spin}, we can write
\begin{align} \label{eqn::difference_four_spin}
	P\Big[z_1\xleftrightarrow{B}z_2,\;z_3\xleftrightarrow{B}z_4\Big]- C_1 |z_2-z_1|^{-\frac{5}{24}} |z_4-z_3|^{-\frac{5}{24}}= P\Big[z_1\xleftrightarrow{B}z_2,\;z_3\xleftrightarrow{B}z_4\Big]- P\Big[z_1\xleftrightarrow{B}z_2\Big] P\Big[z_3\xleftrightarrow{B}z_4\Big].
\end{align}
	
Given two subsets of the plane, $C$ and $D$,  we consider the following events:
\begin{itemize}
	\item $\{z_1\xleftrightarrow{B;C}z_2\}$: there is a black path connecting $z_1$ to $z_2$ contained in $C$;
	\item $\{z_1\xleftrightarrow[D]{B}z_2\}$: $z_1$ and $z_2$ belong to the same black cluster but there is no black path fully contained in $D$;
	\item $\{z_1\xleftrightarrow[D]{B;C}z_2\}$: there is a black path connecting $z_1$ to $z_2$ contained in $C$ but no black path fully contained in $D$. 
\end{itemize}
Now consider disks $B_m=B_m(z_1,z_2)=\{w: |w-\frac{z_1+z_2}{2}|\leq 2^m |z_2-z_1|\}$ for $m=1,\ldots, M$, where $M$ is chosen so that $2^M\sim 1/|z_2-z_1|$, that is, $M\sim -\log| z_2-z_1|$, and so that $z_3$ and $z_4$ are outside $B_M$. 

On the one hand, using the independence of labels at different vertices in percolation, we can write
\begin{align}\label{eqn::four_spin_decom1}
\begin{split}
	P\Big[z_1\xleftrightarrow{B}z_2,\; z_3\xleftrightarrow{B}z_4\Big]=&\,P\Big[z_1\xleftrightarrow{B;B_1}z_2,\; z_3\xleftrightarrow{B}z_4\Big]+\sum_{m=2}^M P\Big[z_1\xleftrightarrow[B_{m-1}]{B;B_m}z_2,\; z_3\xleftrightarrow{B}z_4\Big]\\
	&+P\Big[z_1\xleftrightarrow[B_M]{B}z_2,\; z_3\xleftrightarrow{B}z_4\Big]\\
	=&\,P\Big[z_1\xleftrightarrow{B;B_1}z_2,\; z_3\xleftrightarrow[B_1^c]{B}z_4\Big]+P\Big[z_1\xleftrightarrow{B;B_1}z_2\Big]P\Big[ z_3\xleftrightarrow{B;B_1^c}z_4\Big]\\
	%&+P\Big[z_1\xleftrightarrow[B_M]{B}z_2,\; z_3\xleftrightarrow{B}z_4\Big]\\
	&+\sum_{m=2}^M \left(P\Big[z_1\xleftrightarrow[B_{m-1}]{B;B_m}z_2,\; z_3\xleftrightarrow[B_m^c]{B}z_4\Big]+P\Big[z_1\xleftrightarrow[B_{m-1}]{B;B_m}z_2\Big]P\Big[ z_3\xleftrightarrow{B;B_m^c}z_4\Big]\right)\\
	&+P\Big[z_1\xleftrightarrow[B_M]{B}z_2,\; z_3\xleftrightarrow{B}z_4\Big],
\end{split}
\end{align}
where
\begin{align*}
P\Big[z_1\xleftrightarrow{B;B_1}z_2,\; z_3\xleftrightarrow{B}z_4\Big]:=&\lim_{a\to 0} \pi_a^{-4}\times  \mathbb{P}^a\Big[z_1^a\xleftrightarrow{B;B_1}z_2^a,\, z_3^a\xleftrightarrow{B}z_4^a\Big],\\
P\Big[z_1\xleftrightarrow[B_{m-1}]{B;B_m}z_2,\; z_3\xleftrightarrow{B}z_4\Big]:=&\lim_{a\to 0}\pi_a^{-4}\times \mathbb{P}^a\Big[z_1^a\xleftrightarrow[B_{m-1}]{B;B_m}z_2^a,\, z_3^a\xleftrightarrow{B}z_4^a\Big],\\
P\Big[z_1\xleftrightarrow[B_M]{B}z_2,\,z_3\xleftrightarrow{B}z_4\Big]:=&\lim_{a\to 0} \pi_a^{-4}\times  \mathbb{P}^a\Big[z_1^a\xleftrightarrow[B_M]{B}z_2^a,\, z_3^a\xleftrightarrow{B}z_4^a\Big],\\
P\Big[z_1\xleftrightarrow{B;B_1}z_2,\, z_3\xleftrightarrow[B_1^c]{B}z_4\Big]:=&\lim_{a\to 0}\pi_a^{-4}\times \mathbb{P}^a\Big[z_1^a\xleftrightarrow{B;B_1}z_2^a,\, z_3^a\xleftrightarrow[B_1^c]{B}z_4^a\Big],\\
P\Big[z_1\xleftrightarrow{B;B_1}z_2\Big]:=&\lim_{a\to0} \pi_a^{-2}\times \mathbb{P}^a\Big[z_1^a\xleftrightarrow{B;B_1}z_2^a\Big],\\
P\Big[z_3\xleftrightarrow{B;B_1^c}z_4\Big]:=&\lim_{a\to 0}\pi_a^{-2}\times \mathbb{P}^a\Big[z_3^a\xleftrightarrow{B;B_1^c}z_4^a\Big],\\
P\Big[z_1\xleftrightarrow[B_{m-1}]{B;B_m}z_2,\, z_3\xleftrightarrow[B_m^c]{B}z_4\Big]:=&\lim_{a\to 0}\pi_a^{-4}\times \mathbb{P}^a\Big[z_1^a\xleftrightarrow[B_{m-1}]{B;B_m}z_2^a,\, z_3^a\xleftrightarrow[B_m^c]{B}z_4^a\Big],\\
P\Big[z_1\xleftrightarrow[B_{m-1}]{B;B_m}z_2\Big]:=&\lim_{a\to 0}\pi_a^{-2}\times \mathbb{P}^a\Big[z_1^a\xleftrightarrow[B_{m-1}]{B;B_m}z_2^a\Big],\\
P\Big[z_3\xleftrightarrow{B;B_m^c}z_4\Big]:=&\lim_{a\to 0}\pi_a^{-2}\times \mathbb{P}^a\Big[z_3^a\xleftrightarrow{B;B_m^c}z_4^a\Big].
\end{align*}
The existence of these limits can be shown using the strategy in~\cite[Proof of Theorem~1.5]{Cam23}, without additional essential difficulties. On the other hand, we can write
\begin{align} \label{eqn::four_spin_decom2}
\begin{split}
	P\Big[z_1\xleftrightarrow{B}z_2\Big]P\Big[z_3\xleftrightarrow{B}z_4\Big]=&P\Big[z_1\xleftrightarrow{B;B_1}z_2\Big]P\Big[ z_3\xleftrightarrow[B_1^c]{B}z_4\Big]+P\Big[z_1\xleftrightarrow{B;B_1}z_2\Big]P\Big[ z_3\xleftrightarrow{B;B_1^c}z_4\Big]\\
&+\sum_{m=2}^M \left(P\Big[z_1\xleftrightarrow[B_{m-1}]{B;B_m}z_2\Big]P\Big[ z_3\xleftrightarrow[B_m^c]{B}z_4\Big]+P\Big[z_1\xleftrightarrow[B_{m-1}]{B;B_m}z_2\Big]P\Big[ z_3\xleftrightarrow{B;B_m^c}z_4\Big]\right)\\
&+P\Big[z_1\xleftrightarrow[B_M]{B}z_2\Big]P\Big[ z_3\xleftrightarrow{B}z_4\Big],
\end{split}
\end{align}
where
\begin{align*}
P\Big[z_3\xleftrightarrow[B_1^c]{B}z_4\Big]:=&\lim_{a\to 0} \pi_a^{-2}\times \mathbb{P}^a\Big[z_3^a\xleftrightarrow[B_1^c]{B}z_4^a\Big], \\
P\Big[z_1\xleftrightarrow[B_M]{B}z_2\Big]:=&\lim_{a\to 0} \pi_a^{-2}\times \mathbb{P}^a\Big[z_1^a\xleftrightarrow[B_M]{B}z_2^a\Big],
\end{align*}
and the existence of the limits can again be shown using the strategy in~\cite[Proof of Theorem~1.5]{Cam23}.
Combining~\eqref{eqn::four_spin_decom1} with~\eqref{eqn::four_spin_decom2} yields
\begin{align} \label{eqn::four_spin_decom}
\begin{split}
		&P\Big[z_1\xleftrightarrow{B}z_2,\;z_3\xleftrightarrow{B}z_4\Big]- P\Big[z_1\xleftrightarrow{B}z_2\Big] P\Big[z_3\xleftrightarrow{B}z_4\Big]\\
	&\qquad\qquad=\underbrace{P\Big[z_1\xleftrightarrow{B;B_1}z_2,\; z_3\xleftrightarrow[B_1^c]{B}z_4\Big]-P\Big[z_1\xleftrightarrow{B;B_1}z_2\Big]P\Big[ z_3\xleftrightarrow[B_1^c]{B}z_4\Big]}_{T_1}\\
	&\qquad\qquad\quad +\sum_{m=2}^M\bigg(\underbrace{P\Big[z_1\xleftrightarrow[B_{m-1}]{B;B_m}z_2,\ z_3\xleftrightarrow[B_m^c]{B}z_4\Big]-P\Big[z_1\xleftrightarrow[B_{m-1}]{B;B_m}z_2\Big]P\Big[z_3\xleftrightarrow[B_m^c]{B}z_4\Big]}_{T_m}\bigg)\\
	&\qquad\qquad\quad+\underbrace{P\Big[z_1\xleftrightarrow[B_M]{B}z_2,\; z_3\xleftrightarrow{B}z_4\Big]-P\Big[z_1\xleftrightarrow[B_M]{B}z_2\Big]P\Big[ z_3\xleftrightarrow{B}z_4\Big]}_{T_{M+1}}.
\end{split}
\end{align}

For the term $T_1$ (resp., $T_{M+1}$), note that the event $\{z_3\xleftrightarrow[B_1^c]{B}z_4\}$ (resp., $\{z_1\xleftrightarrow[B_M]{B}z_2\}$)
implies that there exist four monochromatic paths, with alternating labels (black/white/black/white), crossing the annulus $B_{2^M |z_2-z_1|}(\frac{z_1+z_2}{2})\setminus B_{2|z_2-z_1|}(\frac{z_1+z_2}{2})$. Consequently, as in the case of~\eqref{eqn::four_spin_unpair} above, one can show that
\begin{align}\label{eqn::four_spin_decom_aux0}
	\lim_{z_1,z_2\to z} |z_2-z_1|^{-\frac{25}{24}} \, T_1\in \mathbb{R} \quad \text{and}\quad \lim_{z_1,z_2\to z} |z_2-z_1|^{-\frac{25}{24}} \,  T_{M+1}\in \mathbb{R}. 
\end{align}

\begin{figure}
	\includegraphics[width= 0.5\textwidth]{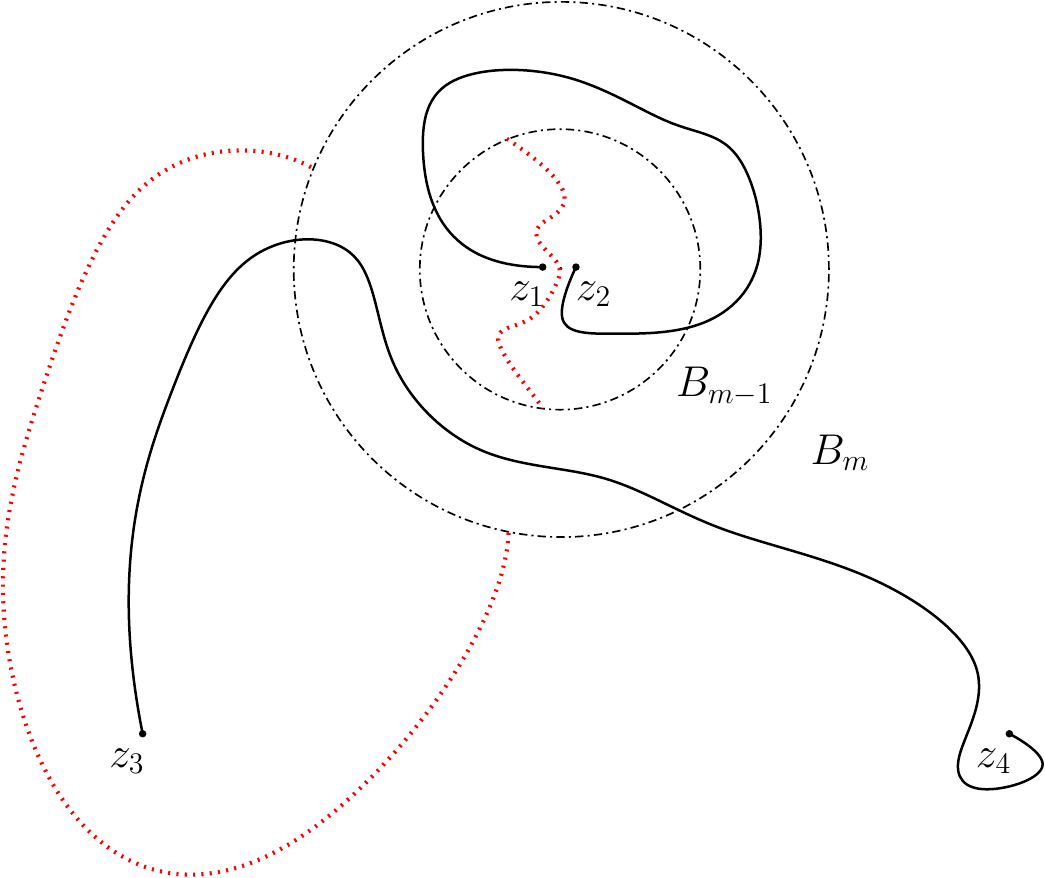}
	\caption{The event $\{ z_1 \xleftrightarrow[B_{m-1}]{B;B_m} z_2,\; z_3 \xleftrightarrow[B_m^c]{B} z_4 \}$ in the term $T_m$ in~\eqref{eqn::four_spin_decom}. The black, solid lines denote black paths, while the red, dotted lines denote white paths. $z_1$ and $z_2$ are contained in $B_{m-1}$. They are not connected by a black path within the disk $B_{m-1}$, but are connected within the larger disk $B_{m}$, with radius twice that of $B_{m-1}$. $z_3$ and $z_4$ are connected by a black path, but not outside $B_m$.  The number, $M$, of disks one can insert between the two groups of points $\{z_1,z_2\}$ and $\{z_3,z_4\}$ is of order $-\log{\vert z_2-z_1 \vert}$.}
	\label{fig::log_geometry}
\end{figure}

Let $m\in \{2,\ldots,M\}$ and consider the terms $T_m$ in~\eqref{eqn::four_spin_decom} (see Figure~\ref{fig::log_geometry} for an illustration of the event $\{ z_1 \xleftrightarrow[B_{m-1}]{B;B_m} z_2,\; z_3 \xleftrightarrow[B_m^c]{B} z_4 \}$). We write
\begin{align*}
	T_m=&\bigg(P\Big[z_1\xleftrightarrow[B_{m-1}]{B}z_2,\ z_3\xleftrightarrow[B_m^c]{B}z_4\big|z_1\xleftrightarrow{B;B_m}z_2\Big]  -P\Big[z_1\xleftrightarrow[B_{m-1}]{B}z_2 \big| z_1\xleftrightarrow{B;B_m}z_2\Big]P\Big[z_3\xleftrightarrow[B_m^c]{B}z_4\Big]\bigg) \\
&\qquad\qquad\qquad	\times P\Big[z_1\xleftrightarrow{B;B_m}z_2\Big],
\end{align*}
where 
\begin{align*}
	P\Big[z_1\xleftrightarrow[B_{m-1}]{B}z_2,\ z_3\xleftrightarrow[B_m^c]{B}z_4\big|z_1\xleftrightarrow{B;B_m}z_2\Big] :=&\,\frac{P\Big[z_1\xleftrightarrow[B_{m-1}]{B;B_m}z_2,\ z_3\xleftrightarrow[B_m^c]{B}z_4\Big] }{P\Big[z_1\xleftrightarrow{B;B_m}z_2\Big]}\\
	=&\,\pi_a^{-2}\times\lim_{a\to 0} \mathbb{P}^a\Big[ z_1^a\xleftrightarrow[B_{m-1}]{B}z_2^a,\; z_3^a\xleftrightarrow[B_m^c]{B}z_4^a \big | z_1^a\xleftrightarrow{B;B_m}z_2^a\Big],\\
	P\Big[z_1\xleftrightarrow[B_{m-1}]{B}z_2 \big| z_1\xleftrightarrow{B;B_m}z_2\Big]:=&\, \frac{P\Big[z_1\xleftrightarrow[B_{m-1}]{B;B_m}z_2\Big]}{P\Big[z_1\xleftrightarrow{B;B_m}z_2\Big]}=\lim_{a\to 0} \mathbb{P}^a\Big[z_1^a\xleftrightarrow[B_{m-1}]{B}z_2^a \big| z_1^a\xleftrightarrow{B;B_m}z_2^a\Big]
\end{align*}
and the existence of the limits follows from the same arguments as in the proof of~\cite[Theorem~1.5]{Cam23}.

On the one hand, the event $\{z_1\xlongleftrightarrow[B_{m-1}]{B}z_2\}$ implies that the annulus $B_{m-1}\setminus B_1$ is crossed by two black paths and two white paths. Since the four-arm exponent is equal to $\frac{5}{4}$~\cite{SmirnovWernerCriticalExponents}, using the up-to-constant estimate of the percolation four-arm probability from~\cite[Theorem~1.1]{ZhanGreen2SLE}, we conclude that
\[
P\big(z_1\xlongleftrightarrow[B_{m-1}]{B}z_2 \,\big|\, z_1\xlongleftrightarrow{B;B_m}z_2\big)
\asymp \big((1/2)^{m-2}\big)^{5/4}.
\] On the other hand, the event $\{z_3\xlongleftrightarrow[B_{m}^c]{B}z_4\}$ implies that the annulus $B_{M}\setminus B_m$ is crossed by two black paths and two white paths, alternating in color. Note, again using the four-arm exponent, that the probability of the latter event is of order $(2^m|z_2-z_1|)^{5/4}$. Recall that we will eventually let $z_1,z_2\to z$ for some $z\not\in \{z_3,z_4\}$ and that $\epsilon>0$ is chosen so that $B_{2\epsilon}(z)\cap \{z_3,z_4\}=\emptyset$.
Therefore, we can further write 
\begin{align}\label{eqn::Tm_decom}
\begin{split}
		& T_m = \bigg(P\Big[z_1\xleftrightarrow[B_{m-1}]{B}z_2,\ z_3\xleftrightarrow[B_m^c]{B}z_4\big|z_1\xleftrightarrow{B;B_m}z_2,\, \mathcal{F}_{2|z_2-z_1|,2^{m-1}|z_2-z_1|}(\tfrac{z_1+z_2}{2}),\, \mathcal{F}_{2^m|z_2-z_1|,\epsilon}(\tfrac{z_1+z_2}{2})\Big]  \\
		& \quad - P\Big[z_1\xleftrightarrow[B_{m-1}]{B}z_2 \big| z_1\xleftrightarrow{B;B_m}z_2,\, \mathcal{F}_{2|z_2-z_1|,2^{m-1}|z_2-z_1|}(\tfrac{z_1+z_2}{2})\Big]P\Big[z_3\xleftrightarrow[B_m^c]{B}z_4\big|\mathcal{F}_{2^m|z_2-z_1|,\epsilon}(\tfrac{z_1+z_2}{2})\Big]\bigg) \\
	&\qquad	\times \underbrace{P\Big[z_1\xleftrightarrow{B;B_m}z_2\Big]}_{\sim |z_2-z_1|^{-\frac{5}{24}}}\underbrace{P\Big[\mathcal{F}_{2|z_2-z_1|,2^{m-1}|z_2-z_1|}(\tfrac{z_1+z_2}{2})\big | z_1\xleftrightarrow{B;B_m}z_2\Big] P\Big[\mathcal{F}_{2^m|z_2-z_1|,\epsilon}(\tfrac{z_1+z_2}{2})\Big]}_{\sim |z_2-z_1|^{\frac{5}{4}}},
\end{split}
\end{align}
where
\begin{align*}
	P\Big[\mathcal{F}_{2|z_2-z_1|,2^{m-1}|z_2-z_1|}(\tfrac{z_1+z_2}{2})\big| z_1\xleftrightarrow{B;B_m}z_2\Big]:=&\,\frac{P\Big[\mathcal{F}_{2|z_2-z_1|,2^{m-1}|z_2-z_1|}(\tfrac{z_1+z_2}{2}),\;  z_1\xleftrightarrow{B;B_m}z_2\Big]}{P\Big[z_1\xleftrightarrow{B;B_m}z_2\Big]},\\
	P\Big[z_3\xleftrightarrow[B_m^c]{B}z_4\big|\mathcal{F}_{2^m|z_2-z_1|,\epsilon}(\tfrac{z_1+z_2}{2})\Big]:= &\,\frac{P\Big[z_3\xleftrightarrow[B_m^c]{B}z_4,\; \mathcal{F}_{2^m|z_2-z_1|,\epsilon}(\tfrac{z_1+z_2}{2})\Big]}{P\Big[\mathcal{F}_{2^m|z_2-z_1|,\epsilon}(\tfrac{z_1+z_2}{2})\Big]},\\
	P\Big[z_1\xleftrightarrow[B_{m-1}]{B}z_2 \big| z_1\xleftrightarrow{B;B_m}z_2,\, \mathcal{F}_{2|z_2-z_1|,2^{m-1}|z_2-z_1|}(\tfrac{z_1+z_2}{2})\Big]:=& \,\frac{P\Big[z_1\xleftrightarrow[B_{m-1}]{B;B_m}z_2, \, \mathcal{F}_{2|z_2-z_1|,2^{m-1}|z_2-z_1|}(\tfrac{z_1+z_2}{2})\Big]}{P\Big[z_1\xleftrightarrow{B;B_m}z_2, \, \mathcal{F}_{2|z_2-z_1|,2^{m-1}|z_2-z_1|}(\tfrac{z_1+z_2}{2})\Big]},
\end{align*}
\begin{align*}
	&P\Big[z_1\xleftrightarrow[B_{m-1}]{B}z_2, z_3\xleftrightarrow[B_m^c]{B}z_4\big|z_1\xleftrightarrow{B;B_m}z_2,\, \mathcal{F}_{2|z_2-z_1|,2^{m-1}|z_2-z_1|}(\tfrac{z_1+z_2}{2}),\, \mathcal{F}_{2^m|z_2-z_1|,\epsilon}(\tfrac{z_1+z_2}{2})\Big] \\
	&\qquad\qquad\qquad	:=\frac{P\Big[z_1\xleftrightarrow[B_{m-1}]{B;B_m}z_2, z_3\xleftrightarrow[B_m^c]{B}z_4,\, \mathcal{F}_{2|z_2-z_1|,2^{m-1}|z_2-z_1|}(\tfrac{z_1+z_2}{2}),\, \mathcal{F}_{2^m|z_2-z_1|,\epsilon}(\tfrac{z_1+z_2}{2})\Big]}{P\Big[z_1\xleftrightarrow{B;B_m}z_2,\, \mathcal{F}_{2|z_2-z_1|,2^{m-1}|z_2-z_1|}(\tfrac{z_1+z_2}{2}),\, \mathcal{F}_{2^m|z_2-z_1|,\epsilon}(\tfrac{z_1+z_2}{2})\Big]} ,
\end{align*}
and where
\begin{align*}
	P\Big[\mathcal{F}_{2|z_2-z_1|,2^{m-1}|z_2-z_1|}(\tfrac{z_1+z_2}{2}),\;  z_1\xleftrightarrow{B;B_m}z_2\Big]:=&\lim_{a\to 0} \pi_a^{-2}\times \mathbb{P}^a\Big[\mathcal{F}_{2|z_2^a-z_1^a|,2^{m-1}|z_2^a-z_1^a|}\big(\tfrac{z_1^a+z_2^a}{2}\big),\;  z_1^a\xleftrightarrow{B;B_m}z_2^a\Big],\\
	P\Big[z_3\xleftrightarrow[B_m^c]{B}z_4,\; \mathcal{F}_{2^m|z_2-z_1|,\epsilon}(\tfrac{z_1+z_2}{2})\Big]:=&\lim_{a\to 0}\pi_a^{-2}\times \mathbb{P}^a\Big[z_3^a\xleftrightarrow[B_m^c]{B}z_4^a,\; \mathcal{F}_{2^m|z^a_2-z^a_1|,\epsilon}\big(\tfrac{z^a_1+z^a_2}{2}\big)\Big],\\
    P\Big[\mathcal{F}_{2|z_2-z_1|,2^{m-1}|z_2-z_1|}\Big(\frac{z_1+z_2}{2}\Big)\Big] :=& \lim_{a \to 0} \mathbb{P}^a\Big[\mathcal{F}_{2|z^a_2-z^a_1|,2^{m-1}|z^a_2-z^a_1|}\Big(\frac{z^a_1+z^a_2}{2}\Big)\Big], \\
	P\Big[z_1\xleftrightarrow[B_{m-1}]{B;B_m}z_2, \, \mathcal{F}_{2|z_2-z_1|,2^{m-1}|z_2-z_1|}(\tfrac{z_1+z_2}{2})\Big]:=&\lim_{a\to 0}\pi_a^{-2}\times \mathbb{P}^a \Big[z^a_1\xleftrightarrow[B_{m-1}]{B;B_m}z^a_2, \, \mathcal{F}_{2|z^a_2-z^a_1|,2^{m-1}|z^a_2-z^a_1|}\big(\tfrac{z^a_1+z^a_2}{2}\big)\Big],\\
	P\Big[z_1\xleftrightarrow{B;B_m}z_2, \, \mathcal{F}_{2|z_2-z_1|,2^{m-1}|z_2-z_1|}(\tfrac{z_1+z_2}{2})\Big]:=&\lim_{a\to 0}\pi_a^{-2}\times \mathbb{P}^a \Big[z^a_1\xleftrightarrow{B;B_m}z^a_2, \, \mathcal{F}_{2|z^a_2-z^a_1|,2^{m-1}|z^a_2-z^a_1|}\big(\tfrac{z^a_1+z^a_2}{2}\big)\Big],
\end{align*}
\begin{align*}
	&P\Big[z_1\xleftrightarrow[B_{m-1}]{B;B_m}z_2, z_3\xleftrightarrow[B_m^c]{B}z_4,\, \mathcal{F}_{2|z_2-z_1|,2^{m-1}|z_2-z_1|}(\tfrac{z_1+z_2}{2}),\, \mathcal{F}_{2^m|z_2-z_1|,\epsilon}(\tfrac{z_1+z_2}{2})\Big]\\
	&\qquad\qquad:=\lim_{a\to 0} \pi_a^{-4}\times \mathbb{P}^a\Big[z^a_1\xleftrightarrow[B_{m-1}]{B;B_m}z^a_2, z^a_3\xleftrightarrow[B_m^c]{B}z^a_4,\, \mathcal{F}_{2|z^a_2-z^a_1|,2^{m-1}|z^a_2-z^a_1|}\big(\tfrac{z^a_1+z^a_2}{2}\big),\, \mathcal{F}_{2^m|z^a_2-z^a_1|,\epsilon}\big(\tfrac{z^a_1+z^a_2}{2}\big)\Big],\\
	&P\Big[z_1\xleftrightarrow{B;B_m}z_2,\, \mathcal{F}_{2|z_2-z_1|,2^{m-1}|z_2-z_1|}(\tfrac{z_1+z_2}{2}),\, \mathcal{F}_{2^m|z_2-z_1|,\epsilon}(\tfrac{z_1+z_2}{2})\Big]\\
	&\qquad\qquad:=\lim_{a\to 0} \pi_a^{-2}\times \mathbb{P}^a\Big[z^a_1\xleftrightarrow{B;B_m}z^a_2,\, \mathcal{F}_{2|z^a_2-z^a_1|,2^{m-1}|z^a_2-z^a_1|}\big(\tfrac{z^a_1+z^a_2}{2}\big),\, \mathcal{F}_{2^m|z^a_2-z^a_1|,\epsilon}\big(\tfrac{z^a_1+z^a_2}{2}\big)\Big].
\end{align*}
The existence of these limits can be shown using the strategy in~\cite[Proof of Theorem~1.5]{Cam23}. Note that the power of the normalization term $\pi_a$ is determined by the number of one-arm events involved.

To set up a Cauchy sequence argument, let $\{z_1^{(n)}\}_{n\geq 1}$ and $\{z_2^{(n)}\}_{n\geq 1}$ be two sequences of complex numbers satisfying $z_1^{(n)}\neq z_2^{(n)}$ and $z_1^{(n)}\to z$, $z_2^{(n)}\to z$ as $n\to \infty$. To keep track of the influence of $n$, we write $B^{(n)}_m=B^{(n)}_m\big(z_1^{(n)},z_2^{(n)}\big)=\{w: |w-\frac{z^{(n)}_1+z^{(n)}_2}{2}|\leq 2^m |z^{(n)}_2-z^{(n)}_1|\}$ for $m=1,\ldots, M_n$, where $M_n$ is chosen so that $2^{M_n}\sim 1/|z^{(n)}_2-z^{(n)}_1|$,  that is, $M_n\sim -\log| z^{(n)}_2-z^{(n)}_1|$, and so that $z_3$ and $z_4$ are outside $B_{M_n}$. We also write $T_m^{(n)}$ for the term $T_m$ associated with $z_1^{(n)}$ and $z_2^{(n)}$.
%$M_n$ for the number of annuli $M$ described above, and write $T_m^{(n)}$ for the term $T_m$.
\begin{lemma} \label{lem::OPE2}
    There exists a constant $c\in (0,\infty)$, which depends only on $z$, $z_1$, $z_4$ and $\epsilon$, such that
    \begin{enumerate}
        \item for any fixed $m\geq 1$ and all $n_1,n_2$ satisfying $\min \{M_{n_1},M_{n_2}\}\geq m$, we have 
	\begin{align}\label{eqn::generic_m_limit_aux1}
		\big| |z_2^{(n_1)}-z_1^{(n_1)}|^{-\frac{25}{24}}T_m^{(n_1)}-|z_2^{(n_2)}-z_1^{(n_2)}|^{-\frac{25}{24}}T_m^{(n_2)}\big| \lesssim 2^{-c \min \{M_{n_1}-m, M_{n_2}-m\}},
	\end{align} \label{item::lem::OPE1_1}
        \item for any $m_1,m_2\geq 1$ and all $n$ satisfying $M_n\geq \max\{m_1,m_2\}$, we have 
	\begin{align}\label{eqn::generic_m_limit_aux2}
		\big|  |z_2^{(n)}-z_1^{(n)}|^{-\frac{25}{24}}T_{m_1}^{(n)}-  |z_2^{(n)}-z_1^{(n)}|^{-\frac{25}{24}}T_{m_2}^{(n)} \big| \lesssim 2^{-c\min\{m_1, m_2, M_{n}-m_1, M_{n}-m_2\}}.
	\end{align} \label{item::lem::OPE1_2}
    \end{enumerate}
\end{lemma}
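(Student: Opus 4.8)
The plan is to analyze the factored representation of $T_m^{(n)}$ recorded in~\eqref{eqn::Tm_decom}. Abbreviating $r_n:=|z_2^{(n)}-z_1^{(n)}|$, that identity writes $T_m^{(n)}$ as a product of three pieces: a \emph{conditional covariance} between the inner event $\{z_1^{(n)}\xleftrightarrow[B_{m-1}^{(n)}]{B}z_2^{(n)}\}$ and the outer event $\{z_3\xleftrightarrow[(B_m^{(n)})^c]{B}z_4\}$; the restricted connection probability $P\big[z_1^{(n)}\xleftrightarrow{B;B_m^{(n)}}z_2^{(n)}\big]\sim r_n^{-5/24}$; and the product of an inner four-arm probability $P[\mathcal{F}_{2r_n,2^{m-1}r_n}]$ and an outer one $P[\mathcal{F}_{2^m r_n,\epsilon}]$. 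The first step is to pin down the last two pieces. By quasi-multiplicativity of the four-arm event and the four-arm exponent $5/4$, the two four-arm probabilities — separated only by the single intermediate annulus $B_m^{(n)}\setminus B_{m-1}^{(n)}$ of bounded modulus — multiply to a quantity comparable to $P[\mathcal{F}_{2r_n,\epsilon}]\sim r_n^{5/4}$, with a ratio that converges to a limit independent of $m$ at a rate exponential in $\min\{m,M_n-m\}$; likewise the restricted connection probability converges to the unrestricted one as $m$ grows, with corrections exponentially small in $m$ since a connection avoiding $B_m^{(n)}$ must feed an extra arm out to its boundary. Multiplying by the normalization $r_n^{-25/24}$ then leaves the conditional covariance times a prefactor whose $m$- and $n$-dependence is already of the required exponential order, so both assertions reduce to stability estimates for the conditional covariance.

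The key structural observation is that this conditional covariance does \emph{not} vanish in the limit: it measures how the four arms reaching $B_m^{(n)}$ from the outer region match, across the bounded-modulus annulus $B_m^{(n)}\setminus B_{m-1}^{(n)}$, to the cluster structure realizing the inner event, and this matching is a genuine, scale-invariant effect. By the scale invariance of the full scaling limit~\cite{CamiaNewmanPercolationFull}, the matching is asymptotically the same at every scale $m$ lying well inside $\{2,\dots,M_n\}$; the two extreme scales — the innermost one, $r_n$, carrying $z_1^{(n)},z_2^{(n)}$, and the outermost one, $\epsilon$, carrying $z_3,z_4$ — influence it only through the intervening annular scales. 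I would quantify this with the coupling of conditional measures built in the proof of~\cite[Lemma~2.3]{CF24} (see also~\cite[Proof of Theorem~1.5]{Cam23}): for two choices of points or of scale index, one produces configurations agreeing outside or inside a chosen buffer, and an RSW/quasi-multiplicativity argument shows that any resulting discrepancy in the covariance must cross each intervening annulus, an event of probability bounded away from one per annulus. This yields an exponential factor $2^{-ck}$ in the number $k$ of insulating scales, uniformly in $a$ after passing to the $a\to0$ limit through the convergence of the collection of interfaces.

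It remains to count the insulating scales in each case. For the estimate~\eqref{eqn::generic_m_limit_aux1} (fixed $m$, indices $n_1,n_2$ with $M_{n_i}\ge m$), the inner configuration is fixed relative to $r_{n_i}$, so the only discrepancy sits in the outer region and is insulated by the $\sim M_{n_i}-m$ scales between the radii $2^m r_{n_i}$ and $\epsilon$; this gives $2^{-c\min\{M_{n_1}-m,\,M_{n_2}-m\}}$. For the estimate~\eqref{eqn::generic_m_limit_aux2} (fixed $n$, scales $m_1,m_2$), the deviation of the normalized covariance at a scale $m$ from its bulk value is insulated on the inner side by the $\sim m$ scales down to $r_n$ and on the outer side by the $\sim M_n-m$ scales up to $\epsilon$, hence is $\lesssim 2^{-c\min\{m,\,M_n-m\}}$; applying this at $m_1$ and $m_2$ and using the triangle inequality produces the four-fold minimum $2^{-c\min\{m_1,\,m_2,\,M_n-m_1,\,M_n-m_2\}}$.

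The main obstacle is the decorrelation estimate for the conditional covariance, and specifically handling the conditioning on the rare four-arm events $\mathcal{F}_{2r_n,2^{m-1}r_n}$ and $\mathcal{F}_{2^m r_n,\epsilon}$, which precludes a naive spatial-Markov argument. As in~\cite[Lemma~2.3]{CF24}, I would work directly with the conditional measures and use RSW bounds to show that, with probability bounded away from one in each buffer annulus, the four-arm interface there can be locally resampled without disturbing the conditioning, thereby decoupling the inner and outer behavior at the claimed exponential rate. Once this is established, the exponential rates from four-arm quasi-multiplicativity, from the restricted-connection correction, and from the coupling combine, and one takes $c$ to be the smallest of them.
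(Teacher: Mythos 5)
Your proposal follows essentially the same route as the paper's proof: both start from the decomposition~\eqref{eqn::Tm_decom}, isolate the power-law factors (the restricted connection probability and the product of four-arm probabilities) using scale invariance, quasi-multiplicativity and the bound $P\big[z_1\xleftrightarrow[B_m]{B}z_2\big]/P\big[z_1\xleftrightarrow{B}z_2\big]\lesssim 2^{-\frac{5}{4}m}$, and then control the remaining conditional-covariance factor by the coupling of conditional measures as in~\cite[Lemma~2.3]{CF24}, with the exponential rates obtained by counting the insulating annular scales ($M_n-m$ for claim 1; $\min\{m,M_n-m\}$ plus a triangle inequality for claim 2). This matches the paper's argument in structure and in all key ingredients, so the proposal is correct.
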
 

We postpone the proof of Lemma~\ref{lem::OPE2} to the next section.
It follows from~\eqref{eqn::generic_m_limit_aux1} that, for each $m\geq 1$, there exists a function $f_m(z,z_3,z_4)\in \mathbb{R}$ such that, for all $n\geq 1$ with $M_n\geq m$, we have 
\begin{align}\label{eqn::generic_m_limit}
	\big||z_2^{(n)}-z_1^{(n)}|^{-\frac{25}{24}} \, T_m^{(n)}-f_m(z,z_3,z_4)\big| \lesssim & \; 2^{-c(M_n-m)}.
\end{align}
Letting $n\to \infty$ in~\eqref{eqn::generic_m_limit_aux2} and using~\eqref{eqn::generic_m_limit}, the triangular inequality gives
	\begin{align*}
		\big|f_{m_1}(z,z_3,z_4)-f_{m_2}(z,z_3,z_4)\big|\lesssim  2 \times 2^{-c \min\{m_1,m_2\}}, \quad \text{for all }m_1,m_2\geq 1,
	\end{align*}
	which implies the existence of the limit $\lim_{m\to \infty} f_{m}(z,z_3,z_4):=f(z,z_3,z_4)\in (0,\infty)$ and the estimate
	\begin{align} \label{eqn::generic_m_limit_aux0}
		\big| f_m(z,z_3,z_4)-f(z,z_3,z_4)\big|\lesssim  2\times 2^{-c m},\quad \text{for all }m\geq 1. 
	\end{align}
 %   {\color{red} \tt Federico: I think there should be an additional 2, but it doesn't change anything.}
    The fact that $f(z,z_3,z_4)>0$ is subtle and its proof requires several steps. Intuitively, the FKG inequality suggests that $f(z,z_3,z_4)\geq 0$, while strict positivity requires a more detailed analysis. The argument is the same as that leading to (3.19) of~\cite{CF24}, so we only list the main steps, referring to~\cite{CF24} for the details.
	\begin{itemize}
		\item First, using a standard application of RSW estimates (see e.g., the proofs of Lemmas 2.1 and 2.2 of~\cite{CamiaNewman2009ising}), the two quantities 
{\small		\begin{align*}
	& |z_2-z_1|^{\frac{5}{24}}P\Big[z_1\xleftrightarrow{B;B_m}z_2\Big],\\
		& |z_2-z_1|^{-\frac{5}{4}}P\Big[\mathcal{F}_{2|z_2-z_1|,2^{m-1}|z_2-z_1|}(\tfrac{z_1+z_2}{2})\big | z_1\xleftrightarrow{B;B_m}z_2\Big] P\Big[\mathcal{F}_{2^m|z_2-z_1|,\epsilon}(\tfrac{z_1+z_2}{2})\Big]
\end{align*}}
 are bounded away from $0$ uniformly in $m$ as $z_1,z_2 \to z$. 
 \item Second, using the coupling results concerning four-arm events in~\cite[Lemmas~2.1 and~2.7]{CF24} and the strategy in the proof of Lemma~\ref{lem::four_arm_coupling_argument}, one can show that the limit below exists:
 \begin{align*}
 	&\lim_{z_1,z_2\to z} \bigg(P\Big[z_1\xleftrightarrow[B_{m-1}]{B}z_2,\ z_3\xleftrightarrow[B_m^c]{B}z_4\big|z_1\xleftrightarrow{B;B_m}z_2,\, \mathcal{F}_{2|z_2-z_1|,2^{m-1}|z_2-z_1|}(\tfrac{z_1+z_2}{2}),\, \mathcal{F}_{2^m|z_2-z_1|,\epsilon}(\tfrac{z_1+z_2}{2})\Big]  \\
 	& \quad - P\Big[z_1\xleftrightarrow[B_{m-1}]{B}z_2 \big| z_1\xleftrightarrow{B;B_m}z_2,\, \mathcal{F}_{2|z_2-z_1|,2^{m-1}|z_2-z_1|}(\tfrac{z_1+z_2}{2})\Big]P\Big[z_3\xleftrightarrow[B_m^c]{B}z_4\big|\mathcal{F}_{2^m|z_2-z_1|, \epsilon}(\tfrac{z_1+z_2}{2})\Big]\bigg)\\
 	&\quad = \lim_{z_1,z_2\to z} P\Big[z_1\xleftrightarrow[B_{m-1}]{B}z_2 \big| z_1\xleftrightarrow{B;B_m}z_2,\, \mathcal{F}_{2|z_2-z_1|,2^{m-1}|z_2-z_1|}(\tfrac{z_1+z_2}{2})\Big]\\
 &	\qquad\times \lim_{z_1,z_2\to z} \bigg(P\Big[z_3\xleftrightarrow[B_m^c]{B}z_4\big|z_1\xleftrightarrow[B_{m-1}]{B;B_m}z_2,\,  \mathcal{F}_{2^m|z_2-z_1|,\epsilon}(\tfrac{z_1+z_2}{2})\Big]  \\
 	& \qquad \qquad \qquad - P\Big[z_3\xleftrightarrow[B_m^c]{B}z_4\big|\mathcal{F}_{2^m|z_2-z_1|,\epsilon}(\tfrac{z_1+z_2}{2})\Big]\bigg),
 \end{align*}
 where
 \begin{align*}
 	P\Big[z_3\xleftrightarrow[B_m^c]{B}z_4\big|z_1\xleftrightarrow[B_{m-1}]{B;B_m}z_2,\,  \mathcal{F}_{2^m|z_2-z_1|,\epsilon}(\tfrac{z_1+z_2}{2})\Big]:=\frac{P\Big[z_1\xleftrightarrow[B_{m-1}]{B;B_m}z_2,\, z_3\xleftrightarrow[B_m^c]{B}z_4,\, \mathcal{F}_{2^m|z_2-z_1|,\epsilon}(\tfrac{z_1+z_2}{2})\Big] }{P\Big[z_1\xleftrightarrow[B_{m-1}]{B;B_m}z_2,\,  \mathcal{F}_{2^m|z_2-z_1|,\epsilon}(\tfrac{z_1+z_2}{2})\Big] } ,
 \end{align*}
 and where 
{\small \begin{align*}
		P\Big[z_1\xleftrightarrow[B_{m-1}]{B;B_m}z_2,\, z_3\xleftrightarrow[B_m^c]{B}z_4,\, \mathcal{F}_{2^m|z_2-z_1|,\epsilon}(\tfrac{z_1+z_2}{2})\Big]:=&\lim_{a\to 0}\pi_a^{-4} \\
        & \quad\times \mathbb{P}^a\Big[z^a_1\xleftrightarrow[B_{m-1}]{B;B_m}z^a_2,\, z^a_3\xleftrightarrow[B_m^c]{B}z^a_4,\, \mathcal{F}_{2^m|z^a_2-z^a_1|,\epsilon}\big(\tfrac{z^a_1+z^a_2}{2}\big)\Big],\\
		P\Big[z_1\xleftrightarrow[B_{m-1}]{B;B_m}z_2,\,  \mathcal{F}_{2^m|z_2-z_1|,\epsilon}(\tfrac{z_1+z_2}{2})\Big] :=&\lim_{a\to 0}\pi_a^{-2}\times  \mathbb{P}^a\Big[z^a_1\xleftrightarrow[B_{m-1}]{B;B_m}z^a_2,\,  \mathcal{F}_{2^m|z^a_2-z^a_1|,\epsilon}\big(\tfrac{z^a_1+z^a_2}{2}\big)\Big].
\end{align*}}The existence of these limits can be shown using the strategy in~\cite[Proof of Theorem~1.5]{Cam23}.
Moreover, a standard application of RSW estimates (see, e.g., the proofs of Lemmas 2.1 and 2.2 of~\cite{CamiaNewman2009ising}) implies that the limit
\begin{align*}
 \lim_{z_1,z_2\to z} P\Big[z_1\xleftrightarrow[B_{m-1}]{B}z_2 \big| z_1\xleftrightarrow{B;B_m}z_2,\, \mathcal{F}_{2|z_2-z_1|,2^{m-1}|z_2-z_1|}(\tfrac{z_1+z_2}{2})\Big]
\end{align*}
is bounded away from $0$ uniformly in $m$.

\item Third, according to~\cite[Lemma~3.6]{CF24} and its proof, we have
\begin{align*}
	&\lim_{m\to \infty} \lim_{z_1,z_2\to z} \bigg(P\Big[ z_3\xleftrightarrow[B_m^c]{B}z_4\big|z_1\xleftrightarrow[B_{m-1}]{B;B_m}z_2,\,  \mathcal{F}_{2^m|z_2-z_1|,\epsilon}(\tfrac{z_1+z_2}{2})\Big]  \\
	&\qquad - P\Big[z_3\xleftrightarrow[B_m^c]{B}z_4\big|\mathcal{F}_{2^m|z_2-z_1|,\epsilon}(\tfrac{z_1+z_2}{2})\Big]\bigg) >0,
\end{align*}
 This last step is the most subtle of the three.
	\end{itemize}
It follows from the above three steps, combined with~\eqref{eqn::Tm_decom}, \eqref{eqn::generic_m_limit} and~\eqref{eqn::generic_m_limit_aux0}, that
\begin{equation} \label{eqn::f}
    f(z,z_3,z_4)=\lim_{m\to \infty}f_m(z,z_3,z_4)=\lim_{m\to \infty}\lim_{z_1,z_2\to z} |z_2-z_1|^{-\frac{25}{24}}T_m>0.
\end{equation}    
	
Combining~\eqref{eqn::spin_four_connec_discrete}, \eqref{eqn::two_point_spin} and~\eqref{eqn::four_spin_decom} with~\eqref{eqn::four_spin_unpair}, \eqref{eqn::lim1-4/2-3}, \eqref{eqn::four_spin_decom_aux0} and~\eqref{eqn::f},
%\eqref{eqn::generic_m_limit} and~\eqref{eqn::generic_m_limit_aux0}
we have the existence of the following limits:
	{\small\begin{align}\label{eqn::OPE2_limit}
			\begin{split}
				&\lim_{z_1,z_2\to z } \Big(\langle \psi(z_1)\psi(z_2)\psi(z_3)\psi(z_4)\rangle -C_1|z_2-z_1|^{-\frac{5}{24}} |z_4-z_3|^{-\frac{5}{24}} - M|z_2-z_1|^{\frac{25}{24}} f(z,z_3,z_4)\Big) |z_2-z_1|^{-\frac{25}{24}}\\
				& = \lim_{z_1,z_2\to z } \Big(\langle \psi(z_1)\psi(z_2)\psi(z_3)\psi(z_4)\rangle -C_1|z_2-z_1|^{-\frac{5}{24}} |z_4-z_3|^{-\frac{5}{24}} +\frac{\log |z_2-z_1|}{\log 2}|z_2-z_1|^{\frac{25}{24}} f(z,z_3,z_4)\Big) |z_2-z_1|^{-\frac{25}{24}},
			\end{split}
	\end{align}}where we have also used the observation that $M \sim -\log_2 |z_2-z_1|$ to get the equality.	

    \begin{remark} \label{rem::truncated_four_point}
		In particular, we have
		\begin{align*}
			\lim_{z_1,z_2\to z} \frac{P\Big[z_1\xleftrightarrow{B}z_2,\; z_3\xleftrightarrow{B}z_4\Big]-P\Big[z_1\xleftrightarrow{B}z_2\Big]P\Big[z_3\xleftrightarrow{B}z_4\Big]}{|z_2-z_1|^{\frac{25}{24}}|\log |z'-z||}= \frac{1}{\log 2} \lim_{m\to \infty}\lim_{z_1,z_2\to z} |z_2-z_1|^{-\frac{25}{24}} \times T_m.
		\end{align*}
\end{remark}

	Since the four-point function $\langle \psi(z_1)\psi(z_2)\psi(z_3)\psi(z_4)\rangle $ satisfies the M\"obius covariance rule~\eqref{eqn::cov_psi}, we conclude that, for any non-constant M\"obius transformation $\varphi$ with $\varphi(z_j)\neq \infty$ for $1\leq j\leq 4$, we have
	\begin{align} \label{eqn::OPE2_COV}
		f\big(\varphi(z),\varphi(z_3),\varphi(z_4)\big)=f(z,z_3,z_4)\times |\varphi'(z)|^{-\frac{5}{4}} |\varphi'(z_3)|^{-\frac{5}{48}}|\varphi'(z_4)|^{-\frac{5}{48}},
	\end{align} 
	which implies that 
	\begin{align}\label{eqn::OPE2_limit_aux1}
		f(z,z_3,z_4)= CF(z,z_3,z_4)\quad  \text{for some constant }C\in (0,\infty),
	\end{align}
    where $F$ is the function defined in~\eqref{eqn::def_F}.
%    \[F(z_1,z_2,z_3):=\vert z_1-z_2\vert^{-\frac{5}{4}} \vert z_1-z_3\vert^{-\frac{5}{4}}|z_3-z_2|^{\frac{25}{24}}.\]
    Plugging~\eqref{eqn::OPE2_limit_aux1} into~\eqref{eqn::OPE2_limit} and comparing it with~\eqref{eqn::OPE1}, we obtain
	\begin{align*}
		\frac{C}{\log 2}=C_1C_L ,
	\end{align*}
    where the factor $\log 2$ comes from the dyadic annulus decomposition.
	%In order to get a conformally covariant limit, we now consider

    The considerations above imply the existence of the following limit:
	\begin{align*}
		&\Big(\langle \psi (z_1)\psi(z_2)\psi (z_3) \psi (z_4)\rangle -C_1 |z_2-z_1|^{-\frac{5}{24}} |z_4-z_3|^{-\frac{5}{24}} + C_1C_L |z_2-z_1|^{\frac{25}{24}} F(z,z_3,z_4) \log\left|\tfrac{(z_2-z_1)(z_4-z_3)}{(z_3-z_1)(z_4-z_2)}\right|\Big)\\
		&\qquad\qquad\qquad \times |z_2-z_1|^{-\frac{25}{24}} \enspace \longrightarrow \enspace \hat{f}(z,z_3,z_4) \in \mathbb{R},\quad \text{as }z_1,z_2\to z. 
	\end{align*}
	%For the same reason, 
    Moreover, since $\langle \psi (z_1)\psi(z_2)\psi (z_3) \psi (z_4)\rangle$ and $F(z,z_3,z_4)$ are conformally covariant and the cross-ratio is a conformally invariant quantity, the expression within parentheses is manifestly conformally covariant. Therefore, the function $\hat{f}(z,z_3,z_4)$ satisfies the same M\"obius covariance rule as the function $f$ above (see~\eqref{eqn::OPE2_COV}), which implies that
	\begin{align*}
		\hat{f}(z,z_3,z_4)= C_1C_0 F(z,z_3,z_4) \quad \text{for some }C_0\in \mathbb{R}
	\end{align*}
	and completes the proof of~\eqref{eqn::OPE2}.

\subsection{Proof of technical lemmas}

In this section, we prove Lemmas~\ref{lem::four_arm_coupling_argument} and~\ref{lem::OPE2}, used in the proof of~\eqref{eqn::OPE2}, as well as Lemma~\ref{lem::limit_two_point}, which will be used in Section~\ref{subsec::mixed_cor} below.

We begin with the proof of Lemma~\ref{lem::four_arm_coupling_argument}.
\begin{proof}[Proof of Lemma~\ref{lem::four_arm_coupling_argument}]
Using the translation invariance of percolation, it suffices to prove the following claim. Suppose that $z_3^{(n)}\to z_3$ and $z_4^{(n)}\to z_4$ as $n\to \infty$, and that $z_1^{(n)}\neq z_2^{(n)}$ satisfy
\[
z_1^{(n)}+z_2^{(n)}=0
\quad\text{and}\quad
z_1^{(n)},z_2^{(n)}\to 0
\qquad\text{as } n\to\infty.
\]
Then the sequence
\begin{align*}
\left\{
|z_2^{(n)}-z_1^{(n)}|^{\frac{5}{24}}
\times
P\Big[
z_1^{(n)}\xleftrightarrow{B}z_3^{(n)}
\centernot{\xleftrightarrow{B}}
z_2^{(n)}\xleftrightarrow{B}z_4^{(n)}
\,\Big|\,
\mathcal{F}_{2|z_1^{(n)}-z_2^{(n)}|,\epsilon}(0)
\Big]
\right\}_{n=1}^{\infty}
\end{align*}
is Cauchy. This reduction simplifies the argument, since the annuli
$B_{\epsilon}(\tfrac{z_1^{(n)}+z_2^{(n)}}{2})\setminus B_{2|z_2^{(n)}-z_1^{(n)}|}(\tfrac{z_1^{(n)}+z_2^{(n)}} {2})$ now
have the same center for all $n$. Without loss of generality, we may assume that $|z_j^{(n)}-z_j|\leq \epsilon/20$ for $1\leq j\leq 4$ and all $n\geq 1$. For each $n\geq 1$ and $1\leq j\leq 4$, let $z_j^{(a,n)}$ be a vertex in $a\mathcal{T}$ such that $|z_j^{(a,n)}-z_j^{(n)}|\leq a$. 

Recall that
\begin{align*}
&    P\Big[
z_1^{(n)}\xleftrightarrow{B}z_3^{(n)}
\centernot{\xleftrightarrow{B}}
z_2^{(n)}\xleftrightarrow{B}z_4^{(n)}
\,\Big|\,
\mathcal{F}_{2|z_1^{(n)}-z_2^{(n)}|,\epsilon}(0)
\Big]\\
&\qquad=\lim_{a\to 0} \pi_a^{-4}\times \mathbb{P}^a\Big[
z_1^{(a,n)}\xleftrightarrow{B}z_3^{(a,n)}
\centernot{\xleftrightarrow{B}}
z_2^{(a,n)}\xleftrightarrow{B}z_4^{(a,n)}
\,\Big|\,
\mathcal{F}_{2|z_1^{(a,n)}-z_2^{(a,n)}|,\epsilon}(0)
\Big].
\end{align*}
For $0<r_1<r_2$, define events
\begin{align*}
    \hat{\mathcal{F}}_{r_2}(z_1^{(a,n)},z_2^{(a,n)}):=&\Big\{z_1^{(a,n)}\xleftrightarrow{B}\partial B_{r_2}(0),\; z_2^{(a,n)}\xleftrightarrow{B}\partial B_{r_2}(0),\; \big(z_1^{(a,n)}\xleftrightarrow{B;B_{r_2}(0)}z_2^{(a,n)}\big)^c\Big\},\\
        \mathring{\mathcal{F}}_{r_1,r_2}(0):= &\mathcal{F}_{r_1,r_2}(0)\setminus \{\text{there exist five disjoint paths, not all with the same label,} \\
        & \qquad\qquad\qquad\text{ crossing the annulus }B_{r_2}(0)\setminus B_{r_1}(0)\}. 
\end{align*}
    Since the five-arm exponent for critical percolation is strictly larger than its four-arm exponent~\cite{SmirnovWernerCriticalExponents}, we have
    \begin{align*}
        {\limsup_{a\to 0}} \, \mathbb{P}^a\Big[\mathcal{F}_{r_1,r_2}(0)\setminus \mathring{\mathcal{F}}_{r_1,r_2}(0)\big|\mathcal{F}_{r_1,r_2}(0) \Big]=o(1), \quad \text{as }\frac{r_1}{r_2} \to 0.
    \end{align*}
    Therefore, we can write
    \begin{equation} \label{eqn::four_arm_coupling_argument_aux1}
\begin{split}
           & \pi_a^{-4}\times \mathbb{P}^a\Big[
z_1^{(a,n)}\xleftrightarrow{B}z_3^{(a,n)}
\centernot{\xleftrightarrow{B}}
z_2^{(a,n)}\xleftrightarrow{B}z_4^{(a,n)}
\,\Big|\,
\mathcal{F}_{2|z_1^{(a,n)}-z_2^{(a,n)}|,\epsilon}(0)
\Big]\\
&= \pi_a^{-2}\times  \mathbb{P}^a\Big[
 \hat{\mathcal{F}}_{\epsilon}(z_1^{(a,n)},z_2^{(a,n)}) \,\Big|\,
\mathcal{F}_{2|z_1^{(a,n)}-z_2^{(a,n)}|,\epsilon}(0)
\Big]  \\
 &\quad\times \Big( \pi_a^{-2}\times \mathbb{P}^a\Big[
z_1^{(a,n)}\xleftrightarrow{B}z_3^{(a,n)}
\centernot{\xleftrightarrow{B}}
z_2^{(a,n)}\xleftrightarrow{B}z_4^{(a,n)}
\,\Big|\,
\hat{\mathcal{F}}_{\epsilon}(z_1^{(a,n)},z_2^{(a,n)})\cap \mathring{\mathcal{F}}_{2|z_1^{(a,n)}-z_2^{(a,n)}|,\epsilon}(0)
\Big]+o(1)\Big) + o(1),
\end{split}
    \end{equation}
    as $a\to 0$ and then $n\to \infty$. 

We will adopt the four-arm event coupling argument developed in~\cite{GarbanPeteSchrammPivotalClusterInterfacePercolation}. To this end, we will use the notion, introduced in~\cite{GarbanPeteSchrammPivotalClusterInterfacePercolation}, of (outward) ``faces" induced by arms of alternating labels. We briefly recall its definition.

 Let $r>0$ and, for $1\leq j\leq 4$, let $y_j\in\partial B_r(0)$ be on the boundary of some hexagon $\tilde{y}_j\in a\mathcal{H}$ which intersects $\partial B_r(0)$, where $y_1,\ldots,y_4$ are chosen in counterclockwise order. A \textit{configuration of faces} $\boldsymbol{\eta}$ around the circle $\partial B_r(0)$ with endpoints $y_1,\ldots,y_4$ is a collection of four oriented simple paths $(\eta_1,\eta_2,\eta_3,\eta_4)$ consisting of hexagons of $a\mathcal{H}$ such that, for $j=1,2,3,4$ (with the convention that $y_5=y_1$),
		\begin{itemize}
			\item $y_j$ is on the boundary of the first hexagon of the path $\eta_j$ and $y_{j+1}$ is on the boundary of the last hexagon of $\eta_j$;
			\item $\eta_j$ is a path consisting of black (resp., white) hexagons if $j$ is odd (resp., even);
			\item there are no hexagons in $\eta_j$ that are entirely contained in $\big(B_r(0)\big)^c$.
		\end{itemize}

\noindent We claim that, on the event $\mathring{\mathcal{F}}_{2|z_1^{(a,n)}-z_2^{(a,n)}|,\epsilon}(0)$, the four arms crossing the annulus $B_{\epsilon}(0)\setminus B_{2|z_1^{(a,n)}-z_2^{(a,n)}|}(0)$ naturally determine a configuration of faces around the circle $\partial B_{\epsilon}(0)$. Indeed, let $H$ be the union of all hexagons {of $a\mathcal{H}$ that intersect} $B_{\epsilon}(0)\setminus B_{2|z_1^{(a,n)}-z_2^{(a,n)}|}(0)$ with the following property: if a hexagon is black (resp., white), then it is connected to both $\partial B_{\epsilon}(0)$ and $\partial B_{2|z_1^{(a,n)}-z_2^{(a,n)}|}(0)$ by black (resp., white) paths. Then the hexagons in $a\mathcal{H}$ lying on the boundary of the connected component of $\mathbb{C}\setminus H$ that is connected to infinity form a natural configuration of faces around the circle $\partial B_{\epsilon}(0)$, which we denote by
\[\bs{\eta}^{(n)}= (\eta_1^{(n)},\eta_2^{(n)},\eta_3^{(n)},\eta_4^{(n)}).\]
In particular, on the event 
\[\hat{\mathcal{F}}_{\epsilon}(z_1^{(a,n)},z_2^{(a,n)})\cap \mathring{\mathcal{F}}_{2|z_1^{(a,n)}-z_2^{(a,n)}|,\epsilon}(0),\]
we fix the order of faces $\bs{\eta}^{(n)}= (\eta_1^{(n)},\eta_2^{(n)},\eta_3^{(n)},\eta_4^{(n)})$ so that $\eta_1^{(n)}$ is in the same black cluster as $z_1^{(a,n)}$ and that $\eta_3^{(n)}$ is in the same black cluster as $z_2^{(a,n)}$.

{We now continue the proof and deal first} with the last conditional probability in~\eqref{eqn::four_arm_coupling_argument_aux1}. We write
{\small\begin{equation}\label{eqn::four_arm_coupling_argument_aux2}
\begin{split}
        &\pi_a^{-2}\times \mathbb{P}^a\Big[
z_1^{(a,n)}\xleftrightarrow{B}z_3^{(a,n)}
\centernot{\xleftrightarrow{B}}
z_2^{(a,n)}\xleftrightarrow{B}z_4^{(a,n)}
\,\Big|\,
\hat{\mathcal{F}}_{\epsilon}(z_1^{(a,n)},z_2^{(a,n)})\cap \mathring{\mathcal{F}}_{2|z_1^{(a,n)}-z_2^{(a,n)}|,\epsilon}(0)
\Big]\\
& = \underbrace{\mathbb{P}^a\Big[
z_1^{(a,n)}\xleftrightarrow{B}z_3^{(a,n)}
\centernot{\xleftrightarrow{B}}
z_2^{(a,n)}\xleftrightarrow{B}z_4^{(a,n)}
\,\Big|\,
\hat{\mathcal{F}}_{\epsilon}(z_1^{(a,n)},z_2^{(a,n)})\cap \mathring{\mathcal{F}}_{2|z_1^{(a,n)}-z_2^{(a,n)}|,\epsilon}(0),\; z_j^{(a,n)}\xleftrightarrow{B} \partial B_{\epsilon}(z_j^{(a,n)}), \; \;{j=3,4}\Big]}_{T^{(a,n)}}\\
&\quad \times \pi_a^{-2}\times \mathbb{P}^a\Big[ z_3^{(a,n)}\xleftrightarrow{B} \partial B_{\epsilon}(z_3^{(a,n)})\Big]\times \mathbb{P}^a\Big[ z_4^{(a,n)}\xleftrightarrow{B} \partial B_{\epsilon}(z_4^{(a,n)})\Big].
\end{split}
\end{equation}}It follows from~\cite{GarbanPeteSchrammPivotalClusterInterfacePercolation} (see the first limit in the third displayed equation on page 999) that 
\begin{align*} 
    \lim_{a\to 0}\pi_a^{-2}\times \mathbb{P}^a\Big[ z_3^{(a,n)}\xleftrightarrow{B} \partial B_{\epsilon}(z_3^{(a,n)})\Big]\times \mathbb{P}^a\Big[ z_4^{(a,n)}\xleftrightarrow{B} \partial B_{\epsilon}(z_4^{(a,n)})\Big] =\epsilon^{-\frac{5}{24}}.
\end{align*}

We now analyze the conditional probability $T^{(a,n)}$ on the right hand side of~\eqref{eqn::four_arm_coupling_argument_aux2}. Using the four-arm event coupling argument in~\cite[Proof of Proposition~3.1]{GarbanPeteSchrammPivotalClusterInterfacePercolation} and the one-arm event coupling argument in~\cite[Proof of Lemma~2.1]{Cam23}, one can show the existence of a coupling $\mathring{\mathbb{P}}^a[\cdot]$ between the conditional measures 
\[\mathbb{P}^a\Big[\cdot
\,\Big|\,
\hat{\mathcal{F}}_{\epsilon}(z_1^{(a,n)},z_2^{(a,n)})\cap \mathring{\mathcal{F}}_{2|z_1^{(a,n)}-z_2^{(a,n)}|,\epsilon}(0),\; z_j^{(a,n)}\xleftrightarrow{B} \partial B_{\epsilon}(z_j^{(a,n)}), \; \;{j=3,4}\Big],\quad \text{for }n\geq 1,\]
that is, a joint distribution on $(\Lambda^{(a,1)},\ldots, \Lambda^{(a,n)},\ldots)$ such that 
\[\Lambda^{(a,n)}\sim \mathbb{P}^a\Big[\cdot
\,\Big|\,
\hat{\mathcal{F}}_{\epsilon}(z_1^{(a,n)},z_2^{(a,n)})\cap \mathring{\mathcal{F}}_{2|z_1^{(a,n)}-z_2^{(a,n)}|,\epsilon}(0),\; z_j^{(a,n)}\xleftrightarrow{B} \partial B_{\epsilon}(z_j^{(a,n)}), \; \;j=1,2\Big],\quad \text{for }n\geq 1,\]
with the following property. There exists a constant $c\in (0,\infty)$ that is independent of $a>0$ and $n\geq 1$ and events $\{\mathcal{O}^{(a,n)}\}_{n\geq 1}$ such that the following holds.
\begin{itemize}
    \item The probabilities of the events $\{\mathcal{O}_n\}_{n\geq 1}$ decay to $0$ as $a\to 0$ and then $n\to \infty$:
    \[\mathring{\mathbb{P}}^a[\mathcal{O}^{(a,n)}] \lesssim \Big(\max \{|z_3^{(a,n)}-z_3|, |z_4^{(a,n)}-z_4|, |z_1^{(a,n)}|, |z_2^{(a,n)}|\}\Big)^c;\]
    \item if the event $\mathcal{O}^{(a,n)}$ happens, then 
    \begin{itemize}
        \item for all $n_1,n_2\geq n$, we have $\bs{\eta}^{(n_1)}=\bs{\eta}^{(n_2)}$;
        \item there exist two closed paths of black hexagons, $\mathtt{C}^{(a,n)}_1\subseteq B_{\epsilon/3}(z_3)\setminus B_{2|z_3^{(a,n)}-z_3|}(z_3)$ and $\mathtt{C}^{(a,n)}_2\subseteq B_{\epsilon/3}(z_4)\setminus B_{2|z_4^{(a,n)}-z_4|}(z_4)$, surrounding $z_3$ and $z_4$, respectively, in all $\Lambda^{(a,m)}$ for {all $m\geq n$;}
        \item for any $n_1,n_2\geq n$, the percolation configurations on hexagons that are not surrounded\footnote{Note that the concatenation of the four faces in $\bs{\eta}^{(n)}$ forms a closed path {of hexagons around the origin}.} by any of $\{\mathtt{C}_1^{(a,n)},\mathtt{C}_2^{(a,n)}, \bs{\eta}^{(n)}\}$ are the same under $\Lambda^{(a,n_1)}$ and $\Lambda^{(a,n_2)}$.
    \end{itemize}
\end{itemize}
Note that, on the event $\mathcal{O}^{(a,n)}$, for any $m\geq n$, the event $\{z_1^{(m)}\xleftrightarrow{B}z_3^{(m)}
\centernot{\xleftrightarrow{B}}
z_2^{(m)}\xleftrightarrow{B}z_4^{(m)}\}$ {implies} $\{\mathtt{C}_1^{(a,m)}\xleftrightarrow{B} \eta_1^{(m)}\centernot{\xleftrightarrow{B}} \mathtt{C}_2^{(a,m)}\xleftrightarrow{B}\eta_3^{(m)}\}$. Therefore, for any $n_1,n_2\geq n$, we have 
\begin{align*}
    \Big| T^{(a,n_1)}-T^{(a,n_2)}\Big|\leq \mathring{\mathbb{P}}^a[\mathcal{O}^{(a,n)}]\lesssim \Big(\max \{|z_3^{(a,n)}-z_3|, |z_4^{(a,n)}-z_4|, |z_1^{(a,n)}|, |z_2^{(a,n)}|\}\Big)^c.
\end{align*}
Letting $a\to 0$ shows that $\{\lim_{a\to 0}T^{(a,n)}\}_{n\geq 1}$
is a Cauchy sequence, where the existence of the limits as $a\to 0$ can be shown using the strategy in~\cite[Proof of Theorem~1.5]{Cam23}, without additional essential difficulties.

Next, one can use a similar argument to show that  
\[\Big\{|z_2^{(n)}-z_1^{(n)}|^{\frac{5}{24}}\times \lim_{a\to 0} \pi_a^{-2}\times  \mathbb{P}^a\Big[
 \hat{\mathcal{F}}_{\epsilon}(z_1^{(a,n)},z_2^{(a,n)}) \,\Big|\,
\mathcal{F}_{2|z_1^{(a,n)}-z_2^{(a,n)}|,\epsilon}(0)
\Big]\Big\}_{n\geq 1}\]
 is a Cauchy sequence, where the existence of the limits as $a\to 0$ can also be shown using the strategy in~\cite[Proof of Theorem~1.5]{Cam23}, without additional essential difficulties.

 Combining these conclusions with~\eqref{eqn::four_arm_coupling_argument_aux1} concludes the proof.
\end{proof}

Next, we prove Lemma~\ref{lem::OPE2}.
\begin{proof}[Proof of Lemma~\ref{lem::OPE2}]
We decompose the term $T_m=T_m^{(n)}$ as in~\eqref{eqn::Tm_decom}.

    We begin with the proof of claim~\ref{item::lem::OPE1_1}. 
\begin{itemize}
\item First, we write
\begin{align*}
h(z_1,z_2):= P\Big[z_1\xleftrightarrow{B;B_m}z_2\Big]= P\Big[z_1\xleftrightarrow{B;B_{2^m|z_2-z_1|}\big(\tfrac{z_1+z_2}{2}\big)} z_2\Big].
\end{align*}
It follows from~\cite[Proof of Theorem~1.4]{Cam23} that, for any non-constant M\"obius transformation $\varphi$ which is a composition of translation and rotation, we have 
\begin{align*}
h(\varphi(z_1),\varphi(z_2))=|\varphi'(z_1)|^{-\frac{5}{48}} |\varphi'(z_2)|^{-\frac{5}{48}}\times h(z_1,z_2).
\end{align*}
Therefore, the quantity $h(z_1,z_2)/|z_2-z_1|^{-\frac{5}{24}}$ is a constant, which implies that
\begin{align*}
\frac{P\Big[z_1^{(n_1)}\xleftrightarrow{B;{B^{(n_1)}_m}}z_2^{(n_1)}\Big]}{|z_2^{(n_1)}-z_1^{(n_1)}|^{-\frac{5}{24}}} = \frac{P\Big[z_1^{(n_2)}\xleftrightarrow{B;{B^{(n_2)}_m}}z_2^{(n_2)}\Big]}{|z_2^{(n_2)}-z_1^{(n_2)}|^{-\frac{5}{24}}}.
\end{align*}
\item Second, we write (to simplify the notation, we write $z_1$ for $z_1^{(n)}$, $z_2$ for $z_2^{(n)}$  and $B_m$ for $B_m^{(n)}$ when there is no ambiguity)
\begin{align*}
  & |z_2-z_1|^{-\frac{5}{4}}\times P\Big[\mathcal{F}_{2|z_2-z_1|,2^{m-1}|z_2-z_1|}(\tfrac{z_1+z_2}{2})\big | z_1\xleftrightarrow{B;B_m}z_2\Big] P\Big[\mathcal{F}_{2^m|z_2-z_1|,\epsilon}(\tfrac{z_1+z_2}{2})\Big]\\
  &=\underbrace{\frac{P\Big[\mathcal{F}_{2|z_2-z_1|,2^{m-1}|z_2-z_1|}(\tfrac{z_1+z_2}{2})\big | z_1\xleftrightarrow{B;B_m}z_2\Big]}{P\Big[\mathcal{F}_{2|z_2-z_1|,2^{m-1}|z_2-z_1|}(\tfrac{z_1+z_2}{2})\Big]} }_{T_{m,1}^{(n)}}\times \underbrace{\frac{P\Big[\mathcal{F}_{2|z_2-z_1|,2^{m-1}|z_2-z_1|}(\tfrac{z_1+z_2}{2})\Big]P\Big[\mathcal{F}_{2^m|z_2-z_1|,\epsilon}(\tfrac{z_1+z_2}{2})\Big]}{P\Big[\mathcal{F}_{2|z_2-z_1|,\epsilon}(\tfrac{z_1+z_2}{2})\Big]}}_{T^{(n)}_{m,2}}\\
  &\qquad\times \underbrace{P\Big[\mathcal{F}_{2|z_2-z_1|,\epsilon}(\tfrac{z_1+z_2}{2})\Big]\times |z_2-z_1|^{-\frac{5}{4}}}_{T^{(n)}_{m,3}}.
\end{align*}
As for $h(z_1,z_2)/|z_2-z_1|^{-\frac{5}{24}}$ in the previous step, $T_{m,1}^{(n)}$ is independent of $n$, so that $T_{m,1}^{(n_1)}=T_{m,1}^{(n_2)}$.
The term $T^{(n)}_{m,3}$ is independent of $m$ and satisfies $\lim_{n\to \infty}T_{m,3}^{(n)}\in (0,\infty)$ (see, e.g.,~\cite[Corollary~2.4]{CF24}). For the term $T_{m,2}^{(n)}$, we write
\begin{eqnarray*}
    \frac{1}{T_{m,2}^{(n)}} & = & P\Big[\mathcal{F}_{2|z_2-z_1|,\epsilon}(\tfrac{z_1+z_2}{2}) \big|\mathcal{F}_{2|z_2-z_1|,2^{m-1}|z_2-z_1|}(\tfrac{z_1+z_2}{2}),\; \mathcal{F}_{2^m|z_2-z_1|,\epsilon}(\tfrac{z_1+z_2}{2})\Big] \\
    & = & P\Big[ \mathcal{F}_{2, \epsilon/|z_2-z_1|}(0) \Big|\mathcal{F}_{2,2^{m-1}}(0),\; \mathcal{F}_{2^m, \epsilon/|z_2-z_1|}(0)\Big]\\
    & = & \lim_{a\to 0} \mathbb{P}^a\Big[\mathcal{F}_{2,\epsilon/|z_2^a-z_1^a|}(0) \big| \mathcal{F}_{2,2^{m-1}}(0), \; \mathcal{F}_{2^m, \epsilon/|z_2^a-z_1^a|}(0)\Big],
\end{eqnarray*}
where {the events in the first and second lines can be expressed in terms of the full-plane, nested $\mathrm{CLE}_6$ and therefore} the second equality follows from the conformal invariance of $\mathrm{CLE}_6$~\cite{CamiaNewmanPercolationFull,GwynneMillerQianCLE}.
For each fixed $n$, let $z_1^{(a,n)}$ and $ z_2^{(a,n)}$ be vertices in $a\mathcal{T}$ such that $z_1^{(a,n)}\to z_1^{(n)}$ and $z_2^{(a,n)}\to z_2^{(n)}$ as $a\to 0$. Then, as in the proof of Lemma~\ref{lem::four_arm_coupling_argument}, for each $a>0$, one can couple the two conditional measures
\[\mathbb{P}^a\Big[ \cdot \big|\mathcal{F}_{2,2^{m-1}}(0),\; \mathcal{F}_{2^m, \epsilon/\big|z_2^{(a,n_1)}-z_1^{(a,n_1)}\big|}(0)\Big] \text{ and }\mathbb{P}^a\Big[ \cdot \big|\mathcal{F}_{2,2^{m-1}}(0),\; \mathcal{F}_{2^m, \epsilon/\big|z_2^{(a,n_2)}-z_1^{(a,n_2)}\big|}(0)\Big]\]
and show (by letting $a\to 0$) that there is a constant $c_*\in (0,\infty)$ such that, for all $n_1,n_2\geq 1$ with $\min\{M_{n_1}, M_{n_2}\}\geq m$,
\begin{align} \label{eqn::four_arm_quasi_multiply}
    \Big|T_{m,2}^{(n_1)}-T_{m,2}^{(n_2)}\Big|\lesssim \left(\min\left\{\frac{\epsilon}{|z_2^{(n_1)}-z_1^{(n_1)}|}, \frac{\epsilon}{|z_{2}^{(n_2)}-z_1^{(n_2)}|}\right\}\right)^{-c_*}.
\end{align}
\item Third, we write
\begin{align*}
   & P\Big[z_1\xleftrightarrow[B_{m-1}]{B}z_2,\ z_3\xleftrightarrow[B_m^c]{B}z_4\big|z_1\xleftrightarrow{B;B_m}z_2,\, \mathcal{F}_{2|z_2-z_1|,2^{m-1}|z_2-z_1|}(\tfrac{z_1+z_2}{2}),\, \mathcal{F}_{2^m|z_2-z_1|,\epsilon}(\tfrac{z_1+z_2}{2})\Big]  \\
		& \qquad - P\Big[z_1\xleftrightarrow[B_{m-1}]{B}z_2 \big| z_1\xleftrightarrow{B;B_m}z_2,\, \mathcal{F}_{2|z_2-z_1|,2^{m-1}|z_2-z_1|}(\tfrac{z_1+z_2}{2})\Big]P\Big[z_3\xleftrightarrow[B_m^c]{B}z_4\big|\mathcal{F}_{2^m|z_2-z_1|,\epsilon}(\tfrac{z_1+z_2}{2})\Big]\\
        &\quad=\underbrace{P\Big[z_1\xleftrightarrow[B_{m-1}]{B}z_2 \big| z_1\xleftrightarrow{B;B_m}z_2,\, \mathcal{F}_{2|z_2-z_1|,2^{m-1}|z_2-z_1|}(\tfrac{z_1+z_2}{2})\Big]}_{T_{m,4}^{(n)}}\\
        &\qquad \times \bigg( \underbrace{P\Big[\ z_3\xleftrightarrow[B_m^c]{B}z_4\big|z_1\xleftrightarrow[B_{m-1}]{B;B_m}z_2,\,  \mathcal{F}_{2^m|z_2-z_1|,\epsilon}(\tfrac{z_1+z_2}{2})\Big]}_{T_{m,5}^{(n)}}-\underbrace{P\Big[z_3\xleftrightarrow[B_m^c]{B}z_4\big|\mathcal{F}_{2^m|z_2-z_1|,\epsilon}(\tfrac{z_1+z_2}{2})\Big]}_{T_{m,6}^{(n)}}\bigg).
\end{align*}
As in the analysis of $h(z_1,z_2)/|z_2-z_1|^{-\frac{5}{24}}$ in the first step, one can show that, for any $n_1,n_2\geq 1$ with $\min\{M_{n_1},M_{n_2}\}\geq m$, we have $T_{m,4}^{(n_1)}=T_{m,4}^{(n_2)}$.
 For the term $T_{m,5}^{(n)}$, as in the proof of Lemma~\ref{lem::four_arm_coupling_argument}, for each $a>0$, one can couple the two conditional measures\footnote{Note that the event $\mathcal{F}_{2\big|z^{(a,n_1)}_2-z^{(a,n_1)}_1\big|,2^{m-1}\big|z^{(a,n_1)}_2-z^{(a,n_1)}_1\big|}\Big(\tfrac{z^{(a,n_1)}_1+z^{(a,n_1)}_2}{2}\Big)$ is implied by the event $\{z_1^{(a,n_1)}\xleftrightarrow[B^{(n_1)}_{m-1}]{B;B^{(n_1)}_m}z_2^{(a,n_1)}\}$. The same conclusion holds if $n_1$ is replaced by $n_2$.}
\begin{align*}
&\mathbb{P}^a\Big[\cdot\big|z_1^{(a,n_1)}\xleftrightarrow[B^{(n_1)}_{m-1}]{B;B^{(n_1)}_m}z_2^{(a,n_1)},\, \mathcal{F}_{2\big|z^{(a,n_1)}_2-z^{(a,n_1)}_1\big|,2^{m-1}\big|z^{(a,n_1)}_2-z^{(a,n_1)}_1\big|}\Big(\tfrac{z^{(a,n_1)}_1+z^{(a,n_1)}_2}{2}\Big),\\
&\qquad\qquad\qquad\qquad\qquad\qquad\qquad	\qquad\qquad\qquad\qquad \mathcal{F}_{2^m\big|z^{(a,n_1)}_2-z^{(a,n_1)}_1\big|,\epsilon}\Big(\tfrac{z^{(a,n_1)}_1+z^{(a,n_1)}_2}{2}\Big)\Big]
\end{align*}
and
\begin{align*}
&\mathbb{P}^a\Big[\cdot\big|z_1^{(a,n_2)}\xleftrightarrow[B^{(n_2)}_{m-1}]{B;B^{(n_2)}_m}z_2^{(a,n_2)},\, \mathcal{F}_{2\big|z^{(a,n_2)}_2-z^{(a,n_2)}_1\big|,2^{m-1}\big|z^{(a,n_2)}_2-z^{(a,n_2)}_1\big|}\Big(\tfrac{z^{(a,n_2)}_1+z^{(a,n_2)}_2}{2}\Big),\\
&\qquad\qquad\qquad\qquad\qquad\qquad\qquad	\qquad\qquad\qquad\qquad \mathcal{F}_{2^m\big|z^{(a,n_2)}_2-z^{(a,n_2)}_1\big|,\epsilon}\Big(\tfrac{z^{(a,n_2)}_1+z^{(a,n_2)}_2}{2}\Big)\Big],
\end{align*}
and show (by letting $a\to 0$) that there is a constant $c_*\in (0,\infty)$ such that, for all $n_1,n_2\geq 1$ with $\min \{M_{n_1},M_{n_2}\}\geq 1$,
\[\big|T_{m,5}^{(n_1)}-T_{m,5}^{(n_2)}\big|\lesssim \left(\min\left\{\frac{\epsilon}{\Big|z_2^{(n_1)}-z_1^{(n_1)}\Big|}, \frac{\epsilon}{\Big|z_{2}^{(n_2)}-z_1^{(n_2)}\Big|}\right\}\right)^{-c_*}.\]
Similarly, for all $n_1,n_2\geq 1$ with $\min \{M_{n_1},M_{n_2}\}\geq 1$, we have
\[|T_{m,6}^{(n_1)}-T_{m,6}^{(n_2)}|\lesssim \left(\min\left\{\frac{\epsilon}{\Big|z_2^{(n_1)}-z_1^{(n_1)}\Big|}, \frac{\epsilon}{\Big|z_{2}^{(n_2)}-z_1^{(n_2)}\Big|}\right\}\right)^{-c_*},\]
for some constant $c_*\in (0,\infty)$.

Finally, a standard application of RSW estimates (see, e.g., the proofs of Lemmas 2.1 and 2.2 of~\cite{CamiaNewman2009ising}) shows that 
\[T_{m,j}^{(n)}\asymp 1,\quad \text{for } j\in \{1,2,\ldots,6\}\setminus \{3\},\]
while~\cite[Corollary~2.4]{CF24} shows that $T_{m,3}^{(n)}\asymp 1$.
\end{itemize}
Combining the above observations with~\eqref{eqn::Tm_decom}, we get~\eqref{eqn::generic_m_limit_aux1}. 
%{\color{red} Do we wnat to add some more details? I'm not sure what...}

We now prove claim~\ref{item::lem::OPE1_2}. First, note that 
\begin{align*}
& \left|1-\frac{P\Big[z_1\xleftrightarrow{B;B_m}z_2\Big]}{\sqrt{C_1}|z_2-z_1|^{-\frac{5}{24}}}\right|=\left|1-\frac{P\Big[z_1\xleftrightarrow{B;B_m}z_2\Big]}{P\Big[z_1\xleftrightarrow{B}z_2\Big]}\right|=\frac{P\Big[z_1\xleftrightarrow[B_m]{B}z_2\Big]}{P\Big[z_1\xleftrightarrow{B}z_2\Big]} \\
& \qquad = \frac{P\Big[z_1\xleftrightarrow[B_m]{B}z_2,\; \mathcal{F}_{2|z_2-z_1|,2^m|z_2-z_1|}(\frac{z_1+z_2}{2})\Big]}{P\Big[z_1\xleftrightarrow{B}z_2\Big]} \\
& \qquad\leq \frac{P\Big[z_1\xleftrightarrow{B}\partial B_{\frac{|z_2-z_1|}{10}}(z_1),\;  z_2\xleftrightarrow{B}\partial B_{\frac{|z_2-z_1|}{10}}(z_2),\; \mathcal{F}_{2|z_2-z_1|,2^m|z_2-z_1|}(\frac{z_1+z_2}{2})\Big]}{P\Big[z_1\xleftrightarrow{B}z_2\Big]} \\
& \qquad = P\Big[ \mathcal{F}_{2|z_2-z_1|,2^m|z_2-z_1|}\Big(\frac{z_1+z_2}{2}\Big)\Big]\times \frac{P\Big[z_1\xleftrightarrow{B}\partial B_{\frac{|z_2-z_1|}{10}}(z_1),\;  z_2\xleftrightarrow{B}\partial B_{\frac{|z_2-z_1|}{10}}(z_2)\Big]}{P\Big[z_1\xleftrightarrow{B}z_2\Big]} \\
& \qquad \lesssim P\Big[\mathcal{F}_{2|z_2-z_1|,2^m|z_2-z_1|}\Big(\frac{z_1+z_2}{2}\Big)\Big]\asymp 2^{-\frac{5}{4}m},
\end{align*}
where the inequality in the last line follows from a standard application of RSW estimates (see, e.g., the proofs of Lemmas 2.1 and 2.2 of~\cite{CamiaNewman2009ising}), and the last estimate follows from ~\cite[Corollary~2.4]{CF24}. The remaining terms in the decomposition~\eqref{eqn::Tm_decom} can be treated in a similar way as in the proof of claim $1$. We then get~\eqref{eqn::generic_m_limit_aux2}, as desired.
\end{proof}
We end this section with the following lemma, which will be used in Section~\ref{subsec::mixed_cor} below. 
\begin{lemma}\label{lem::limit_two_point}
Let
\begin{equation*}
    P\Big[z_1\xleftrightarrow{B;B_m}z_2 \big| \mathcal{F}_{2|z_2-z_1|,2^{m-1}|z_2-z_1|}\Big(\frac{z_1+z_2}{2}\Big)\Big] := \frac{P\Big[z_1\xleftrightarrow{B;B_m}z_2, \mathcal{F}_{2|z_2-z_1|,2^{m-1}|z_2-z_1|}\big(\frac{z_1+z_2}{2}\big)\Big]}{P\Big[\mathcal{F}_{2|z_2-z_1|,2^{m-1}|z_2-z_1|}\big(\frac{z_1+z_2}{2}\big)\Big]}.
\end{equation*}
Then, there exists a constant $C_*\in (0,\infty)$ such that
\begin{align} \label{eqn::limit_two_point}
\lim_{m\to \infty}	\lim_{z_1,z_2\to z} |z_2-z_1|^{\frac{5}{24}}\times P\Big[z_1\xleftrightarrow{B;B_m}z_2 \big| \mathcal{F}_{2|z_2-z_1|,2^{m-1}|z_2-z_1|}\Big(\frac{z_1+z_2}{2}\Big)\Big]=C_*.
	\end{align}
\end{lemma}
\begin{proof}
We recall that
\begin{equation*}
    P\Big[z_1\xleftrightarrow{B;B_m}z_2, \mathcal{F}_{2|z_2-z_1|,2^{m-1}|z_2-z_1|}\Big(\frac{z_1+z_2}{2}\Big)\Big] := \lim_{a \to 0} \pi_a^{-2} \mathbb{P}^a\Big[z^a_1\xleftrightarrow{B;B_m}z^a_2, \mathcal{F}_{2|z^a_2-z^a_1|,2^{m-1}|z^a_2-z^a_1|}\Big(\frac{z^a_1+z^a_2}{2}\Big)\Big]
\end{equation*}
and
\begin{equation*}
    P\Big[\mathcal{F}_{2|z_2-z_1|,2^{m-1}|z_2-z_1|}\Big(\frac{z_1+z_2}{2}\Big)\Big] := \lim_{a \to 0} \mathbb{P}^a\Big[\mathcal{F}_{2|z^a_2-z^a_1|,2^{m-1}|z^a_2-z^a_1|}\Big(\frac{z^a_1+z^a_2}{2}\Big)\Big].
\end{equation*}

As in the analysis of $h(z_1,z_2)/|z_2-z_1|^{-\frac{5}{24}}$ in the first step of the proof of Lemma~\ref{lem::OPE2}, we see that
\begin{align*}
& |z_2-z_1|^{\frac{5}{24}}\times P\Big[z_1\xleftrightarrow{B;B_m}z_2 \big| \mathcal{F}_{2|z_2-z_1|,2^{m-1}|z_2-z_1|}\Big(\frac{z_1+z_2}{2}\Big)\Big] \\
& \qquad = {\sqrt{C_1}} \, \frac{P\Big[z_1\xleftrightarrow{B;B_m}z_2 \big| \mathcal{F}_{2|z_2-z_1|,2^{m-1}|z_2-z_1|}(\frac{z_1+z_2}{2})\Big]}{P\Big[z_1\xleftrightarrow{B}z_2\Big]} = {\sqrt{C_1}} \, \frac{P\Big[-1\xleftrightarrow{B;B_{2^{m+1}}(0)}1 \big| \mathcal{F}_{4,2^m}(0)\Big]}{P\Big[-1\xleftrightarrow{B}1\Big]}
\end{align*}
does not depend on $z_1$ and $z_2$. As in the proof of Lemma~\ref{lem::four_arm_coupling_argument}, for each $a>0$, one can couple the measures $\mathbb{P}^a\Big[\cdot\big| \mathcal{F}_{4,2^m}(0)\Big]$ for $m\geq 1$ and show (by letting $a\to 0$) that
\[ \left\{ \frac{P\Big[-1\xleftrightarrow{B;B_m}1 \big| \mathcal{F}_{4,2^m}(0)\Big]}{P\Big[-1\xleftrightarrow{B}1\Big]} \right\}_{m=1}^{\infty}
\]
%$\left\{P\Big[z_1\xleftrightarrow{B;B_m}z_2 \big| \mathcal{F}_{2|z_2-z_1|,2^m|z_2-z_1|}(\frac{z_1+z_2}{2})\Big]\right\}_{m=1}^{\infty}$
is a Cauchy sequence. We then get~\eqref{eqn::limit_two_point} for some constant $C_*\in [0,\infty)$. The fact that $C_*>0$ follows from a standard application of RSW estimates (see, e.g., the proofs of Lemmas 2.1 and 2.2 of~\cite{CamiaNewman2009ising}).
\end{proof}

\section{Correlations of the energy field $\phi$ and the density field $\psi$}
\subsection{Mixed correlations of $\phi$ and $\psi$} \label{subsec::mixed_cor}
%We focus for simplicity on the case $\ell=1=k$, that is, Eq.~\eqref{eqn::three_point_energy_density} for the three-point function $\langle \phi(z_1)\psi(z_2)\psi (z_3)\rangle$, as the ideas for the proof of the results concerning general $n=\ell+2k$ points given in~\eqref{eqn::mixed_correlation_spin_energy} and~\eqref{eqn::cov_mixed_correlation_spin_energy}
% are essentially the same. 

For $z\in \mathbb{C}$ and $0<r<R$, recall that $\mathcal{F}_{r,R}(z)$ is the event that there exist four paths with alternating labels (black/white) crossing the annulus $B_{R}(z)\setminus B_{r}(z)$. For $z^a\in a\mathcal{T}$ and $r>0$, define
\begin{align*}
 	\mathcal{F}_{r}(z^a):=\{z^a\xleftrightarrow{BWBW}\partial B_{r}(z^a)\}\cap \{z^a \text{ is white}\},
\end{align*}
where $\{z^a\xleftrightarrow{BWBW}\partial B_{r}(z^a)\}$ is the event that there exist four paths with alternating labels (black/white) connecting $z^a$ to $\partial B_{r}(z^a)$.
According to~\cite[Theorem~1.3]{SharpArmExponent},
\begin{align} \label{eqn::four_arm_proba}
 \lim_{a\to 0}	a^{-\frac{5}{4}} \, \mathbb{P}^a\Big[\mathcal{F}_r(z^a)\Big] \in (0,\infty). 
\end{align}

 \begin{proof}[Proof of~\eqref{eqn::three_point_energy_density} in Theorem~\ref{thm::energy_spin}]
A simple calculation gives
\begin{align} \label{eqn::energy_density_three_express}
\begin{split}
	 	\langle \mathcal{E}_{z_1^a}S_{z_2^a}S_{z_3^a}\rangle^a=& \, \langle S_{z_1^{(a,-)}}S_{z_1^{(a,+)}}S_{z_2^a}S_{z_3^a}\rangle^a-\langle S_{z_1^{(a,-)}}S_{z_1^{(a,+)}}\rangle^a\langle S_{z_2^a}S_{z_3^a}\rangle^a\\
	=&\,\underbrace{\mathbb{P}^a\Big[z_1^{(a,-)}\xleftrightarrow{B}z_2^a\centernot{\xleftrightarrow{B}}z_1^{(a,+)}\xleftrightarrow{B}z_3^a\Big]}_{R_1^a}+\underbrace{\mathbb{P}^a\Big[z_1^{(a,-)}\xleftrightarrow{B}z_3^a\centernot{\xleftrightarrow{B}}z_1^{(a,+)}\xleftrightarrow{B}z_2^a\Big]}_{R_2^a}\\
	&+\underbrace{\mathbb{P}^a\Big[z_1^{(a,-)}\xleftrightarrow{B}z_1^{(a,+)},\; z_2^a\xleftrightarrow{B}z_3^a\Big]- \mathbb{P}^a\Big[z_1^{(a,-)}\xleftrightarrow{B}z_1^{(a,+)}\Big]\mathbb{P}^a\Big[z_2^a\xleftrightarrow{B}z_3^a\Big]}_{R_3^a}.
\end{split}
\end{align}
Choose $\epsilon>0$ such that $B_{2\epsilon}(z_i^a)\cap B_{2\epsilon}(z_j^a)=\emptyset$ for $1\leq i<j\leq 3$. 

For the term $R_1^a$ (resp., $R_2^a$), note that the event $\{z_1^{(a,-)}\xleftrightarrow{B}z_2^a\centernot{\xleftrightarrow{B}}z_1^{(a,+)}\xleftrightarrow{B}z_3^a\}$ (resp., $\{z_1^{(a,-)}\xleftrightarrow{B}z_3^a\centernot{\xleftrightarrow{B}}z_1^{(a,+)}\xleftrightarrow{B}z_2^a\}$) implies the event $\mathcal{F}_{\epsilon}(z_1^a)$ for small enough $a>0$. We then conclude from~\eqref{eqn::four_arm_proba} that 
 \begin{align*}
 	R_1^a \lesssim a^{\frac{5}{4}} \quad\text{and}\quad R_2^a\lesssim a^{\frac{5}{4}}. 
 \end{align*}

The analysis of the term $R_3^a$ is similar to that of the difference in~\eqref{eqn::difference_four_spin} presented in Section~\ref{sec::sketch_four_spin}. Let $M$ be the largest integer such that $2^M (2a)\leq \epsilon$; note that $M$ is of order $-\log a$. For $1\leq m\leq M$, define $B_m^a= \{w\in \mathbb{C}: |w-z_1^a|\leq 2^m (2a)\}$. Proceeding as in the derivation of~\eqref{eqn::four_spin_decom}, one can show that 
\begin{align}
\begin{split}
R_3^a
&=\underbrace{\mathbb{P}^a\Big[z_1^{(a,-)}\xleftrightarrow{B;B_1^a}z_1^{(a,+)},\; z_2^a\xleftrightarrow[(B_1^a)^c]{B}z_3^a\Big]-\mathbb{P}^a\Big[z_1^{(a,-)}\xleftrightarrow{B;B_1^a}z_1^{(a,+)}\Big]\mathbb{P}^a\Big[ z_2^a\xleftrightarrow[(B_1^a)^c]{B}z_3^a\Big]}_{T_1^a}\\
& +\sum_{m=2}^M\bigg(\underbrace{\mathbb{P}^a\Big[z_1^{(a,-)}\xleftrightarrow[B_{m-1}^a]{B;B_m^a}z_1^{(a,+)},\; z_2^a\xleftrightarrow[(B_m^a)^c]{B}z_3^a\Big]-\mathbb{P}^a\Big[z_1^{(a,-)}\xleftrightarrow[B_{m-1}^a]{B;B_m^a}z_1^{(a,+)}\Big]\mathbb{P}^a\Big[z_2^a\xleftrightarrow[(B_m^a)^c]{B}z_3^a\Big]}_{T_m^a}\bigg)\\
&+\underbrace{\mathbb{P}^a\Big[z_1^{(a,-)}\xleftrightarrow[B_M^a]{B}z_1^{(a,+)},\; z_2^a\xleftrightarrow{B}z_3^a\Big]-\mathbb{P}^a\Big[z_1^{(a,-)}\xleftrightarrow[B_M^a]{B}z_1^{(a,+)}\Big]\mathbb{P}^a\Big[ z_2^a\xleftrightarrow{B}z_3^a\Big]}_{T_{M+1}^a} ,
\end{split} \label{def::T^a_m}
\end{align}
which is the analog of~\eqref{eqn::four_spin_decom} with $(z_1,z_2,z_3,z_4)$ replaced by $(z_1^{(a,-)},z_1^{(a,+)},z_3^a,z_4^a)$.
 
For small enough $a$, each of the events $\{z_2^a\xlongleftrightarrow[(B_1^a)^c]{B}z_3^a\}$ and $\{z_1^{(a,-)}\xleftrightarrow[(B_M)^a]{B} z_1^{(a,+)}\}$, contained in the terms $T_1^a$ and $T_{M+1}^a$, respectively, implies $\mathcal{F}_{4a, 2^M(2a)}(z_1^a)$. %while the probability of the latter event is of order $a^{\frac{5}{4}}$ according to~\eqref{eqn::four_arm_proba}. 
 Consequently, thanks to~\eqref{eqn::four_arm_proba}, we have 
 \begin{align}
 	T_1^a \lesssim a^{\frac{5}{4}}\quad \text{and} \quad T_{M+1}^a\lesssim a^{\frac{5}{4}}. 
 \end{align}

Going back to $R^a_1$, for $0<a<\epsilon$ sufficiently small, we can write
\begin{align*}
    & R^a_1 = \mathbb{P}^a\Big[z_1^{(a-)}\xleftrightarrow{B}z_2^a\centernot{\xleftrightarrow{B}}z_1^{(a,+)}\xleftrightarrow{B}z_3^a \vert \mathcal{F}_{\epsilon}(z^a_1), \;z_2^a\xleftrightarrow{B}\partial B_{\epsilon}(z_2^a),\; z_3^a\xleftrightarrow{B}\partial B_{\epsilon}(z_3^a) \Big] \\
    & \qquad \quad \times \mathbb{P}^a\Big[\mathcal{F}_{\epsilon}(z_1^a)\Big] \mathbb{P}^a\Big[z_2^a\xleftrightarrow{B}\partial B_{\epsilon}(z_2^a)\Big] \mathbb{P}^a\Big[z_3^a\xleftrightarrow{B}\partial B_{\epsilon}(z_3^a)\Big].
\end{align*}
It follows from~\eqref{eqn::four_arm_proba} that
\begin{align*}
    \lim_{a \to 0} \big(a^{\frac{5}{4}}\vert\log a \vert\big)^{-1} \pi_a^{-2} R^a_1 = 0. 
\end{align*}
Similarly,
\begin{align*}
    \lim_{a \to 0} \big(a^{\frac{5}{4}}\vert\log a \vert\big)^{-1} \pi_a^{-2}  R^a_2 = 0, \; \lim_{a \to 0} \big(a^{\frac{5}{4}}\vert\log a \vert\big)^{-1} \pi_a^{-2} T^a_1 = 0, \; \lim_{a \to 0} \big(a^{\frac{5}{4}}\vert\log a \vert\big)^{-1} \pi_a^{-2} T^a_{M+1} = 0.
\end{align*}
Therefore we have
\begin{eqnarray} \label{eq::lim=sum}
    \lim_{a \to 0} \big(a^{\frac{5}{4}}\vert\log a \vert\big)^{-1} \pi_a^{-2}\langle \mathcal{E}_{z_1^a}S_{z_2^a}S_{z_3^a}\rangle^a & = & \lim_{a \to 0} \big(a^{\frac{5}{4}}\vert\log a \vert\big)^{-1} \pi_a^{-2} \Big(R^a_1 +  R^a_2 + T^a_1 + T^a_{M+1} + \sum_{m=2}^M T^a_m \Big) \nonumber \\
    & = & \lim_{a \to 0} \big(a^{\frac{5}{4}}\vert\log a \vert\big)^{-1} \pi_a^{-2} \sum_{m=2}^{M} T^a_m.
\end{eqnarray}
%where the limit is given by~\eqref{eq::lim-sum-T_m}.

 Lemma~\ref{lem::three_point_spin_energy_aux} below shows that
\begin{align} \label{eqn::existence-limit-sum}
\lim_{a\to 0} \big(a^{\frac{5}{4}}|\log a|\big)^{-1}\pi_a^{-2} \sum_{m=2}^MT_m^a \in (0,\infty)
\end{align}
and, moreover, that there exists a constant $C>0$ such that
\begin{align} \label{eq::lim-sum-T_m}
	\lim_{a\to 0} \big(a^{\frac{5}{4}}|\log a|\big)^{-1} \pi_a^{-2} \, \sum_{m=2}^M T_m^a=C \lim_{z,z'\to z_1} \frac{P\Big[z\xleftrightarrow{B}z',\; z_2\xleftrightarrow{B}z_3\Big]-P\Big[z\xleftrightarrow{B}z'\Big]P\Big[z_2\xleftrightarrow{B}z_3\Big]}{|z'-z|^{\frac{25}{24}}|\log |z'-z||},
\end{align}
where the existence of the limit on the right-hand side of~\eqref{eq::lim-sum-T_m} follows from~\eqref{eqn::OPE1} and~\eqref{eqn::spin_four_connec_discrete}.

{Using~\eqref{eqn::OPE2},} \eqref{eq::4pf-connection_probs}, \eqref{eq::13_24} and~\eqref{eqn::difference_four_spin}, we can write
\begin{align}
    {-C_1C_L F(z_1,z_2,z_3)} & = \lim_{z,z' \to z_1}\frac{\langle\psi(z)\psi(z')\psi(z_2)\psi(z_3)\rangle - C_1 \vert z-z' \vert^{-\frac{5}{24}} \vert z_3-z_2 \vert^{-\frac{5}{24}}}{|z'-z|^{\frac{25}{24}}\vert\log\vert z'-z\vert\vert} \label{eq::top-lim} \\
    & = \lim_{z,z'\to z_1} \frac{P\Big[z\xleftrightarrow{B}z',\; z_2\xleftrightarrow{B}z_3\Big]-P\Big[z\xleftrightarrow{B}z'\Big]P\Big[z_2\xleftrightarrow{B}z_3\Big]}{|z'-z|^{\frac{25}{24}}|\log |z'-z||}\label{eq::bottom-lim}.
%    &  ={\color{blue}-C_1C_L F(z_1,z_2,z_3)}. \notag
\end{align}
%Applying \eqref{eqn::OPE2} to evaluate \eqref{eq::top-lim},
{Comparing \eqref{eq::bottom-lim} with~\eqref{eq::lim-sum-T_m} and using \eqref{eq::lim=sum} gives~\eqref{eqn::three_point_energy_density}, as we set out to prove.}
\end{proof}

\begin{lemma} \label{lem::three_point_spin_energy_aux}
Consider the terms $T_m^a$ defined in~\eqref{def::T^a_m} above.
%in the proof of~\eqref{eqn::three_point_energy_density} presented above.
Then we have~\eqref{eqn::existence-limit-sum} and~\eqref{eq::lim-sum-T_m}. 
\end{lemma}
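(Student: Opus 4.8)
The plan is to transplant the analysis of the continuum four-spin term $T_m$ from Section~\ref{sec::sketch_four_spin} to the lattice, with the colliding pair now played by $z_1^{(a,-)},z_1^{(a,+)}$ (separation $2a$) and the fixed pair by $z_2^a,z_3^a$. First I would decompose each $T_m^a$ from~\eqref{def::T^a_m} exactly as in~\eqref{eqn::Tm_decom}:
\[
T_m^a = \big(A_m^a - B_m^a\,C_m^a\big)\,\mathbb{P}^a\big[z_1^{(a,-)}\xleftrightarrow{B;B_m^a}z_1^{(a,+)}\big]\,\mathbb{P}^a\big[\mathcal{F}_m^{\mathrm{in}}\,\big|\,z_1^{(a,-)}\xleftrightarrow{B;B_m^a}z_1^{(a,+)}\big]\,\mathbb{P}^a\big[\mathcal{F}_m^{\mathrm{out}}\big],
\]
with $\mathcal{F}_m^{\mathrm{in}} = \mathcal{F}_{4a,\,2^{m-1}(2a)}(z_1^a)$, $\mathcal{F}_m^{\mathrm{out}} = \mathcal{F}_{2^m(2a),\,\epsilon}(z_1^a)$, and $A_m^a,B_m^a,C_m^a$ the lattice conditional probabilities corresponding to $A_m,B_m,C_m$ in~\eqref{eqn::Tm_decom}. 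The exponent arithmetic is the heart of the matter: by~\eqref{eqn::two_point_spin}, \eqref{eqn::four_arm_proba} and RSW, the connection factor is $\asymp 1$, the inner four-arm is $\asymp 2^{-\frac54 m}$, the outer four-arm is $\asymp(2^m a)^{\frac54}$, and the macroscopic connection $z_2^a\xleftrightarrow{B}z_3^a$ encoded in $A_m^a$ and $C_m^a$ contributes $\asymp\pi_a^2$; multiplying, the powers of $2^m$ cancel and $T_m^a\asymp\pi_a^2 a^{\frac54}$ uniformly in $m$. This $m$-independence, together with the fact that there are $M\sim|\log a|/\log2$ terms, is exactly the mechanism producing the logarithm.

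Setting $g_m^a := (\pi_a^2 a^{5/4})^{-1}T_m^a$, the normalized sum is $(a^{5/4}|\log a|)^{-1}\pi_a^{-2}\sum_{m=2}^M T_m^a = |\log a|^{-1}\sum_{m=2}^M g_m^a$, so~\eqref{eqn::existence-limit-sum} becomes a Cesàro statement. To control it I would prove the exact lattice analogue of Lemma~\ref{lem::OPE2}: using the couplings of conditional percolation measures (as in~\cite[Lemma~2.3]{CF24}), the scale covariance of the connection factor (as in the first step of the proof of Lemma~\ref{lem::OPE2}), and RSW, one shows that $g_m^a$ is uniformly bounded and that, for $m$ in the bulk, it stays within $2^{-c\min\{m,\,M-m\}}$ of an $a$- and $m$-independent limit $g=g(z_1,z_2,z_3)$. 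Since $M/|\log a|\to 1/\log2$ and the errors are summable in $m$, this gives $|\log a|^{-1}\sum_{m=2}^M g_m^a\to g/\log2$, establishing the existence of the limit in~\eqref{eqn::existence-limit-sum}.

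Positivity $g>0$ follows from the three-step argument that yields $f>0$ in~\eqref{eqn::f}: RSW bounds the connection and four-arm factors below uniformly in $m$, the coupling argument gives the existence of the $a\to0$ limit of $A_m^a-B_m^a C_m^a$, and its strict positivity as $m\to\infty$ is the delicate point, handled as in~\cite[Lemma~3.6]{CF24}. For the identification~\eqref{eq::lim-sum-T_m}, I would note that $g_m^a$ and the continuum normalized term $|z'-z|^{-25/24}T_m^{\mathrm{cont}}$ (at $z_1$, with fixed pair $z_2,z_3$) are built from the identical scale-covariant pieces and differ only through the innermost, microscopic region, whose contribution converges as $m\to\infty$ to a universal multiplicative constant $C$; equivalently, $g$ inherits the conformal covariance of $f$ (weight $5/4$ at $z_1$ and $5/48$ at $z_2,z_3$), so that $g\propto F\propto f$ with a configuration-independent ratio. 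Since the continuum limit on the right of~\eqref{eq::lim-sum-T_m} equals $\tfrac{1}{\log2}f$ by Remark~\ref{rem::truncated_four_point} while the left equals $g/\log2$, this yields~\eqref{eq::lim-sum-T_m} with $C\in(0,\infty)$.

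The main obstacle is twofold. First, the uniform coupling estimates of the Lemma~\ref{lem::OPE2} type must be run with the innermost scale tied to the lattice spacing and with $M=M(a)\to\infty$, so that the errors are controlled simultaneously over the whole range $2\le m\le M$ as $a\to0$; this is the most technical step. Second, the strict positivity of the bulk limit $g$ rests on the subtle third step (the analogue of~\cite[Lemma~3.6]{CF24}), which carries the genuine content. A secondary point is to confirm that the microscopic inner region contributes a single universal constant $C$, independent of $m$ and of the outer points $z_2,z_3$.
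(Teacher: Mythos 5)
Your proposal follows essentially the same route as the paper's proof: the same decomposition of $T_m^a$ into a connection factor, inner and outer four-arm factors, and a difference of conditional probabilities; the same exponent arithmetic showing $a^{-5/4}\pi_a^{-2}T_m^a$ is uniformly bounded so that the $M\sim|\log a|/\log 2$ scales produce the logarithm; the same coupling arguments (in the style of~\cite[Lemma~2.3]{CF24}, with the inner scale tied to the lattice spacing and a rescaling by $2^{-m}$) to identify a bulk-scale limit; the same positivity argument via the analogue of~\cite[Lemmas~3.5 and~3.6]{CF24}; and the same identification with the continuum truncated limit through Remark~\ref{rem::truncated_four_point}. The only differences are in packaging --- you sum directly with exponential error bounds in $\min\{m,M-m\}$, whereas the paper fixes $m=m_y\sim My$ and applies dominated convergence over $y\in(0,1)$, and it implements your ``microscopic region gives a universal constant'' step concretely as the change-of-limits identity~\eqref{eqn::change_of_limits} with explicit constants $C_1^*,\dots,C_6^*$ rather than via a covariance argument --- and you correctly flag exactly these points as the technical heart of the matter.
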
 
\begin{proof}
We adopt the same notation as in the proof of~\eqref{eqn::three_point_energy_density} just above.
Fix $y\in (0,1)$ and $m_y\in \{\lfloor My \rfloor, \lceil My\rceil\}$. 
Let $m\in \{2,\ldots,M\}$ and consider the term $T_m^a$. On the one hand, the event $\{z_1^{(a,-)}\xlongleftrightarrow[B_{m-1}^a]{B}z_1^{(a,+)}\}$ implies $\mathcal{F}_{2a, 2^{m}a}(z_1^a)$. On the other hand, the event $\{z_3\xlongleftrightarrow[(B^a_{m})^c]{B}z_4\}$ implies $\mathcal{F}_{2^{m+1}a,2^{M+1}a}(z_1^a)$. Note also that the events $\mathcal{F}_{2a,2^m a}(z_1^a)$ and $\mathcal{F}_{2^{m+1}a,2^{M+1}a}(z_1^a)$ are independent. Consequently, we can write 
\begin{align} \label{eqn::T^a_m}
\begin{split}
 	&T_{m_y}^a\\
 	&=\bigg(\mathbb{P}^a\Big[z_1^{(a,-)}\xleftrightarrow[B_{m_y-1}^a]{B}z_1^{(a,+)},\; z_2^a\xleftrightarrow[(B_{m_y}^a)^c]{B}z_3^a\big| z_1^{(a,-)} \xleftrightarrow{B;B_{m_y}^a} z_1^{(a,+)},\;   \mathcal{F}_{2a,2^{m_y}a}(z_1^a),\; \mathcal{F}_{2^{m_y+1}a,\epsilon}(z_1^a)\Big]  \\
 	&\qquad-\mathbb{P}^a\Big[z_1^{(a,-)}\xleftrightarrow[B_{m_y-1}^a]{B}z_1^{(a,+)} \big| z_1^{(a,-)} \xleftrightarrow{B;B_{m_y}^a} z_1^{(a,+)},\; \mathcal{F}_{2a,2^{m_y}a}(z_1^a) \Big]\mathbb{P}^a\Big[z_2^a\xleftrightarrow[(B_{m_y}^a)^c]{B}z_3^a \big| \mathcal{F}_{2^{m_y+1}a,\epsilon}(z_1^a)\Big]\bigg)\\
 	&\qquad	\times \mathbb{P}^a\Big[z_1^{(a,-)} \xleftrightarrow{B;B_{m_y}^a} z_1^{(a,+)}\big| \mathcal{F}_{2a,2^{m_y} a}(z_1^a)\Big]\times \mathbb{P}^a\Big[\mathcal{F}_{2a,2^{m_y}a}(z_1^a)\Big]\times  \mathbb{P}^a\Big[\mathcal{F}_{2^{m_y+1}a,\epsilon}(z_1^a)\Big].
\end{split}
\end{align}
We then analyze the terms in~\eqref{eqn::T^a_m} separately, using techniques similar to those employed in the proof of Lemma~\ref{lem::OPE2}.

First, we write 
\begin{align*}
	\begin{split} 
		&a^{-\frac{5}{4}}\times \mathbb{P}^a\Big[\mathcal{F}_{2a,2^{m_y}a}(z_1^a)\Big]\times  \mathbb{P}^a\Big[\mathcal{F}_{2^{m_y+1}a,\epsilon}(z_1^a)\Big]\\
		&\qquad =\Big(a^{-\frac{5}{4}}\times \mathbb{P}^a\left[\mathcal{F}_{\epsilon}(z_1^a)\right]\Big) \times \underbrace{\frac{\mathbb{P}^a\Big[\mathcal{F}_{2a,2^{m_y}a}(z_1^a)\Big]\times  \mathbb{P}^a\Big[\mathcal{F}_{2^{m_y+1}a,\epsilon}(z_1^a)\Big]}{\mathbb{P}^a\Big[\mathcal{F}_{\epsilon}(z_1^a)\Big]}}_{T_{m_y,1}^a}.
	\end{split}
\end{align*}
We can use~\eqref{eqn::four_arm_proba} and \cite[Proposition 4.9]{GarbanPeteSchrammPivotalClusterInterfacePercolation} (see also~\cite[Eq.~(2.4)]{CF24}) to conclude that
\begin{align} \label{eqn::C_1^*}
	\lim_{a\to 0}a^{-\frac{5}{4}}\times \mathbb{P}^a\left[\mathcal{F}_{\epsilon}(z_1^a)\right] =\lim_{a\to 0} a^{-\frac{5}{4}}\times \mathbb{P}^a\left[\mathcal{F}_{1}(z_1^a)\right]\times \frac{\mathbb{P}^a\left[\mathcal{F}_{\epsilon}(z_1^a)\right]}{\mathbb{P}^a\left[\mathcal{F}_{1}(z_1^a)\right]}=C_1^* \epsilon ^{-\frac{5}{4}}
\end{align}
for some constant $C_1^*\in (0,\infty)$. For the term $T_{m_y,1}^a$, we have
\begin{align*}
	\frac{1}{T_{m_y,1}^a}=\mathbb{P}^a\left[\mathcal{F}_{\epsilon}(z_1^a)\big| \mathcal{F}_{2a, 2^{m_y}a}(z_1^a),\; \mathcal{F}_{2^{m_y+1}a,\epsilon}(z_1^a)\right]
\end{align*}
Let $0<\eta\ll 1$. As in the proof of Lemma~\ref{lem::four_arm_coupling_argument}, one can find a coupling between the two conditional measures
\[\mathbb{P}^a\left[\,\cdot\,\big| \mathcal{F}_{2a, 2^{m_y}a}(z_1^a),\; \mathcal{F}_{2^{m_y+1}a,\epsilon}(z_1^a)\right]\text{ and }\mathbb{P}^a\left[\,\cdot\,\big| \mathcal{F}_{\eta2^{m_y}a, 2^{m_y}a}(z_1^a),\; \mathcal{F}_{2^{m_y+1}a,\;2^{m_y}a/\eta}(z_1^a)\right]\]
and show that 
\begin{align} \label{eqn::four_arm_events_estimate}
&\Big| \mathbb{P}^a\left[\mathcal{F}_{\epsilon}(z_1^a)\big| \mathcal{F}_{2a, 2^{m_y}a}(z_1^a),\; \mathcal{F}_{2^{m_y+1}a,\epsilon}(z_1^a)\right]\notag\\
&\qquad\qquad-\mathbb{P}^a\left[\mathcal{F}_{\eta 2^{m_y}a, 2^{m_y}a/\eta}(z_1^a)\big| \mathcal{F}_{\eta2^{m_y}a, 2^{m_y}a}(z_1^a),\; \mathcal{F}_{2^{m_y+1}a,\;2^{m_y}a/\eta}(z_1^a)\right]\Big|\lesssim \left(\min\left\{\frac{1}{\eta}, 2^{M-m_y},2^{-m_y}\right\}\right)^{-c_*},
\end{align}
for some constant $c_*\in (0,\infty)$. Note that, as $a\to 0$, the annulus
\[
B_{2^{m_y}a/\eta}(z_1^a)\setminus B_{\eta 2^{m_y}a}(z_1^a)
\]
that appears in the crossing event $\mathcal{F}_{\eta 2^{m_y}a,\, 2^{m_y}a/\eta}(z_1^a)$ shrinks to the single point $z_1^a$.
To handle this issue, we introduce an auxiliary scale, $a_*:=2^{-m_y}$, which also tends to $0$ as $a\to 0$. Rescaling the triangular lattice by $a_*$, we get
\begin{align*}
&\lim_{a\to 0}\mathbb{P}^a\left[\mathcal{F}_{\eta 2^{m_y}a, 2^{m_y}a/\eta}(z_1^a)\big| \mathcal{F}_{\eta2^{m_y}a, 2^{m_y}a}(z_1^a),\; \mathcal{F}_{2^{m_y+1}a,\;2^{m_y}a/\eta}(z_1^a)\right]\\
&\quad=\lim_{a_*\to 0} \mathbb{P}^{a_*}\left[\mathcal{F}_{\eta,1/\eta}(z_1^{a_*})\big| \mathcal{F}_{\eta,1}(z_1^{a_*}),\; \mathcal{F}_{2, 1/\eta}(z_1^{a_*})\right]=:P\left[\mathcal{F}_{\eta,1/\eta}(z_1)\big| \mathcal{F}_{\eta,1}(z_1),\; \mathcal{F}_{2,1/\eta}(z_1)\right],
\end{align*}
where we have used the convergence of percolation interfaces~\cite{CamiaNewmanPercolationFull} to get the existence of the second limit. Let $\{\eta_n\}_{n=1}^{\infty}$ satisfy $\eta_n\to 0$ as $n\to\infty$. Then, for each $a>0$, as in the proof of Lemma~\ref{lem::four_arm_coupling_argument}, one can couple the measures $\mathbb{P}^a\Big[\cdot\big| \mathcal{F}_{\eta_n,1}(z_1^a),\, \mathcal{F}_{2,1/\eta_n}(z_1^a)\Big]$ for different values of $n$, and show (by letting $a\to 0$) that $\{P\left[\mathcal{F}_{\eta_n,1/\eta_n}(z_1)\big| \mathcal{F}_{\eta_n,1}(z_1),\; \mathcal{F}_{2,1/\eta_n}(z_1)\right]\}_{n=1}^{\infty}$ is a Cauchy sequence. Combining this with the estimate~\eqref{eqn::four_arm_events_estimate} implies the existence of a constant $C_2^{*}\in (0,\infty)$ such that 
\begin{align*} 
	\lim_{a\to 0} \frac{\mathbb{P}^a\Big[\mathcal{F}_{2a,2^{m_y}a}(z_1^a)\Big]\times  \mathbb{P}^a\Big[\mathcal{F}_{2^{m_y+1}a,\epsilon}(z_1^a)\Big]}{\mathbb{P}^a\Big[\mathcal{F}_{\epsilon}(z_1^a)\Big]}=C_2^*.
\end{align*}
Therefore, using~\eqref{eqn::C_1^*}, we have
\begin{align} \label{eqn::four_arm_quasi}
    \lim_{a\to 0}a^{-\frac{5}{4}}\times \mathbb{P}^a\Big[\mathcal{F}_{2a,2^{m_y}a}(z_1^a)\Big]\times  \mathbb{P}^a\Big[\mathcal{F}_{2^{m_y+1}a,\epsilon}(z_1^a)\Big]= C_1^*C_2^*\epsilon^{-\frac{5}{4}}. 
\end{align}

Second, note that 
\begin{align*}
	\mathbb{P}^a\Big[z_1^{(a,-)} \xleftrightarrow{B;B_{m_y}^a} z_1^{(a,+)}\big| \mathcal{F}_{2a,2^{m_y} a}(z_1^a)\Big]=\mathbb{P}^1\Big[-1\xleftrightarrow{B;B^1_{2^{m_y+1}}(0)}1 \big| \mathcal{F}_{2,2^{m_y}}(0) \Big].
\end{align*}
Since $m_y\to \infty$ as $a\to 0$, as in the proof of Lemma~\ref{lem::four_arm_coupling_argument}, one can couple the measures $\mathbb{P}^1\left[\,\cdot\, \big| \mathcal{F}_{2,2^{m_y}}(0)\right]$ for different values of $m_y$ and show that 
\begin{equation}
	\begin{split} \label{eqn::lim=C^*_3}
		\lim_{a\to 0}\mathbb{P}^a\Big[z_1^{(a,-)} \xleftrightarrow{B;B_{m_y}^a} z_1^{(a,+)}\big| \mathcal{F}_{2a,2^{m_y} a}(z_1^a)\Big]=\lim_{m_y\to \infty}\mathbb{P}^1\Big[-1\xleftrightarrow{B;B^1_{2^{m_y+1}}(0)}1 \big| \mathcal{F}_{2,2^{m_y}}(0) \Big]=C_3^{*},
	\end{split}
\end{equation}
for some constant $C_3^*\in (0,\infty)$.

Third, we claim that
%{\color{red} for $\hat{M}$ such that $2^{\hat{M}}|z'-z|/ \epsilon \to 1$ as $z,z'\to z_1$, Yu: I found a technical problem: it's not always possible to find integers $\hat{M}$ such that $2^{\hat{M}}|z-z'|/\epsilon\to 1$ as $z,z'\to 1$ (for instance, one can choose $|z^{(n)}-(z')^{(n)}|=3\epsilon/2^n$); so we may have to replace $2^{\hat{M}}|z'-z|$ and $2^{M+1}a$ below by $\epsilon$}
\begin{align} \label{eqn::change_of_limits}
	&	\lim_{a\to 0}	\pi_a^{-2}\times \bigg(\mathbb{P}^a\Big[z_1^{(a,-)}\xleftrightarrow[B_{m_y-1}^a]{B}z_1^{(a,+)},\; z_2^a\xleftrightarrow[(B_{m_y}^a)^c]{B}z_3^a\big| z_1^{(a,-)} \xleftrightarrow{B;B_{m_y}^a} z_1^{(a,+)},\;   \mathcal{F}_{2a,2^{m_y}a}(z_1^a),\; \mathcal{F}_{2^{m_y+1}a,\epsilon}(z_1^a)\Big] \notag \\
	&-\mathbb{P}^a\Big[z_1^{(a,-)}\xleftrightarrow[B_{m_y-1}^a]{B}z_1^{(a,+)} \big| z_1^{(a,-)} \xleftrightarrow{B;B_{m_y}^a} z_1^{(a,+)},\; \mathcal{F}_{2a,2^{m_y}a}(z_1^a) \Big]\mathbb{P}^a\Big[z_2^a\xleftrightarrow[(B_{m_y}^a)^c]{B}z_3^a \big| \mathcal{F}_{2^{m_y+1}a,\epsilon}(z_1^a)\Big]\bigg)\notag\\
	&=C_4^*\lim_{m\to \infty} \lim_{z,z'\to z_1} \bigg(P\Big[z\xleftrightarrow[\hat{B}_{m-1}]{B}z',\ z_2\xleftrightarrow[\hat{B}_{m}^c]{B}z_3\big|z\xleftrightarrow{B;\hat{B}_{m}}z',\, \mathcal{F}_{2|z'-z|,2^{m-1}|z'-z|}(\tfrac{z+z'}{2}),\, \mathcal{F}_{2^{m}|z'-z|,\epsilon}(\tfrac{z+z'}{2})\Big]  \notag\\
	& \quad - P\Big[z\xleftrightarrow[\hat{B}_{m-1}]{B}z' \big| z\xleftrightarrow{B;\hat{B}_{m}}z',\, \mathcal{F}_{2|z'-z|,2^{m-1}|z'-z|}(\tfrac{z+z'}{2})\Big]P\Big[z_2\xleftrightarrow[\hat{B}_{m}^c]{B}z_3\big|\mathcal{F}_{2^{m}|z'-z|,\epsilon}(\tfrac{z+z'}{2})\Big]\bigg),
\end{align}
for some constant $C_4^*\in(0,\infty)$, where $\hat{B}_{m}:=B_{2^m|z'-z|}(\frac{z+z'}{2})$ for $1\leq m\leq \hat{M}$ and $\hat{M}$ is the largest integer such that $2^{\hat{M}}|z'-z|\leq \epsilon$.
To prove~\eqref{eqn::change_of_limits}, we write
\begin{align}
\begin{split} \label{eqn::decomp-T234}
		& \lim_{a\to 0}	\pi_a^{-2}\times \bigg(\mathbb{P}^a\Big[z_1^{(a,-)}\xleftrightarrow[B_{m_y-1}^a]{B}z_1^{(a,+)},\; z_2^a\xleftrightarrow[(B_{m_y}^a)^c]{B}z_3^a\big| z_1^{(a,-)} \xleftrightarrow{B;B_{m_y}^a} z_1^{(a,+)},\;   \mathcal{F}_{2a,2^{m_y}a}(z_1^a),\; \mathcal{F}_{2^{m_y+1}a,\epsilon}(z_1^a)\Big] \\
	& \quad -\mathbb{P}^a\Big[z_1^{(a,-)}\xleftrightarrow[B_{m_y-1}^a]{B}z_1^{(a,+)} \big| z_1^{(a,-)} \xleftrightarrow{B;B_{m_y}^a} z_1^{(a,+)},\; \mathcal{F}_{2a,2^{m_y}a}(z_1^a) \Big]\mathbb{P}^a\Big[z_2^a\xleftrightarrow[(B_{m_y}^a)^c]{B}z_3^a \big| \mathcal{F}_{2^{m_y+1}a,\epsilon}(z_1^a)\Big]\bigg) \\
	& \quad = \lim_{a\to 0} \pi_a^{-2}\times \underbrace{\mathbb{P}^a\Big[z_1^{(a,-)}\xleftrightarrow[B_{m_y-1}^a]{B}z_1^{(a,+)} \big| z_1^{(a,-)} \xleftrightarrow{B;B_{m_y}^a} z_1^{(a,+)},\; \mathcal{F}_{2a,2^{m_y}a}(z_1^a) \Big]}_{T_{m_y,2}^a}
	\\
	&\qquad\quad\times\bigg(\underbrace{\mathbb{P}^a\Big[ z_2^a\xleftrightarrow[(B_{m_y}^a)^c]{B}z_3^a\big| z_1^{(a,-)} \xleftrightarrow[B_{m_y-1}^a]{B;B_{m_y}^a} z_1^{(a,+)},\;  \mathcal{F}_{2^{m_y+1}a,\epsilon}(z_1^a)\Big]}_{T_{m_y,3}^a} - \underbrace{\mathbb{P}^a\Big[z_2^a\xleftrightarrow[(B_{m_y}^a)^c]{B}z_3^a \big| \mathcal{F}_{2^{m_y+1}a,\epsilon}(z_1^a)\Big]}_{T_{m_y,4}^a} \bigg)
\end{split}
\end{align}
and proceed to analyze the terms of the last expression.
\begin{itemize}
	\item For the term $T_{m_y,2}^a$, note that, by scale and translation invariance,
	\begin{align*}
		T_{m_y,2}^a=\mathbb{P}^1\Big[-1\xleftrightarrow[{B_{2^{m_y}}(0)}]{B} 1\big|-1\xleftrightarrow{B;{B_{2^{m_y+1}}(0)}} 1,\; \mathcal{F}_{2,2^{m_y}}(0)\ \Big].
	\end{align*}
	Since {$m_y\to \infty$} as $a\to 0$, as in the proof of Lemma~\ref{lem::four_arm_coupling_argument}, one can find a coupling between the measures $\mathbb{P}^1\left[\,\cdot\, \big|-1\xleftrightarrow{B;{B_{2^{m_y+1}}}(0)} 1,\; \mathcal{F}_{2,2^{m_y}}(0)\right]$, for different values of $m_y$, and show that
	\begin{equation*}%\label{eqn::change_of_limits_aux2}
		\lim_{a\to 0}T_{m_y,2}^a=\lim_{m_y\to\infty}\mathbb{P}^1\Big[-1\xleftrightarrow[{B_{2^{m_y}}(0)}]{B} 1\big|-1\xleftrightarrow{B;{B_{2^{m_y+1}}(0)}} 1,\; \mathcal{F}_{2,2^{m_y}}(0)\ \Big]\in (0,\infty).
	\end{equation*}
    A similar analysis in the scaling limit shows that
		\begin{align*}
	&	\lim_{m\to \infty}	\lim_{z,z' \to z_1} P\Big[z\xleftrightarrow[\hat{B}_{m-1}]{B}z' \big| z \xleftrightarrow{B;\hat{B}_m} z',\; \mathcal{F}_{|z'-z|,2^{m-1}|z'-z|}\Big(\frac{z+z'}{2}\Big) \Big] \\
		&\qquad=\lim_{m\to\infty} P\Big[-1\xleftrightarrow[B_{2^{m}}(0)]{B}1\big| -1\xleftrightarrow{B;B_{2^{m+1}}(0)}1,\; \mathcal{F}_{2,2^m}(0)\Big]\in (0,\infty).
		\end{align*}
		Since the two limits above exist and are not zero, we can write
		\begin{align} \label{eqn::change_of_limits_aux3}
			\lim_{a\to 0}T_{m_y,2}^a=C^*_4\lim_{m\to \infty}	\lim_{z,z' \to z_1} P\Big[z\xleftrightarrow[\hat{B}_{m-1}]{B}z' \big| z \xleftrightarrow{B;\hat{B}_m} z',\; \mathcal{F}_{|z'-z|,2^{m-1}|z'-z|}\Big(\frac{z+z'}{2}\Big) \Big],
		\end{align}
		for some $C^*_4\in(0,\infty)$.	
	
	\item We now analyze the term $T_{m_y,3}^a$. For $r>0$, we define the event %$\hat{\mathcal{F}}^a_r(z_1^a,z_2^a,z_3^a)$ by 
	\[\hat{\mathcal{F}}^a_r(z_1^a,z_2^a,z_3^a):=\Big\{z_2^a\xleftrightarrow{B}\partial B_{r}(z_1^a),\; z_3^a\xleftrightarrow{B}\partial B_{r}(z_1^a),\; \big(z_2^a\xleftrightarrow{B;\big(B_r(z_1^a)\big)^c}z_3^a\big)^c\Big\}.\]
For $0<r_1<r_2$, we define the event %$\mathring{\mathcal{F}}_{r_1,r_2}(z_1^a)$ by
    \begin{align*}
        \mathring{\mathcal{F}}_{r_1,r_2}(z_1^a):= &\mathcal{F}_{r_1,r_2}(z_1^a)\setminus \{\text{there exist five disjoint paths, not all with the same label,} \\
        & \qquad\qquad\qquad\text{ crossing the annulus }B_{r_2}(z_1^a)\setminus B_{r_1}(z_1^a)\}.
    \end{align*}
  On the event $\mathring{\mathcal{F}}_{r_1,r_2}(z_1^a)$, define $\mathcal{C}_{r_1,r_2}^a(z_1^a)$ to be the event that the two disjoint black paths in $\mathring{\mathcal{F}}_{r_1,r_2}(z_1^a)$ are connected to each other by a black path in $B_{r_1}(z_1^a)$. We define
    \[\tilde{\mathcal{F}}_{r_1,r_2}^a(z_1^a,z_2^a,z_3^a):=\hat{\mathcal{F}}^a_{r_1}(z_1^a,z_2^a,z_3^a)\cap \mathring{\mathcal{F}}_{r_1,r_2}(z_1^a).\]
    Since the five-arm exponent for critical percolation is strictly larger than its four-arm exponent~\cite{SmirnovWernerCriticalExponents}, we have 
    \begin{align} \label{eqn::five_arm_four_arm}
        {\limsup_{a\to 0}}\, \mathbb{P}^a\Big[\mathcal{F}_{r_1,r_2}(z_1^a)\setminus \mathring{\mathcal{F}}_{r_1,r_2}(z_1^a)\big|\mathcal{F}_{r_1,r_2}(z_1^a) \Big]=o(1), \quad \text{as }\frac{r_1}{r_2} \to 0 .
    \end{align}
%    {\color{blue}\tt Federico: This definition is ambiguous because there can be more than two black paths. It would be better to define $\mathcal{C}_r^a(z_1^a)$ as the event that $z_2^a$ and $z_3^a$ are connected by a black path on the event $\hat{\mathcal{F}}^a_r(z_1^a,z_2^a,z_3^a)$, but I think this would create problems later on... I need to think more about this. \qquad Yu: I added an auxiliary event $\tilde{\mathcal{F}}(z_1^a)\subseteq \mathcal{F}_{r_1,r_2}(z_1^a)$, which says that there are exactly four (disjoint) paths with labels b/w/b/w. Then \[\mathbb{P}^a\Big[\tilde{\mathcal{F}}(z_1^a) \big | \mathcal{F}_{r_1,r_2}(z_1^a)\Big]=1-o(1), \quad \text{as }r_1/r_2\to 0.\]}
To simplify the presentation and without loss of generality, we may assume that $z_1=0$ and $z_1^a=0$ for all $a>0$. Then we can write
\begin{align*}
T_{m_y,3}^a=&\,\mathbb{P}^a\Big[\tilde{\mathcal{F}}^a_{2^{m_y+1}a,\epsilon}(0,z_2^a,z_3^a) \big | -a\xleftrightarrow[B_{m_y-1}^a]{B;B_{m_y}^a}a,\;\mathcal{F}_{2^{m_y+1}a,\epsilon}(0)\Big]\\
&\qquad\times \mathbb{P}^a\Big[z_2^a\xleftrightarrow[(B_{m_y}^a)^c]{B}z_3^a\big| -a\xleftrightarrow[B_{m_y-1}^a]{B;B_{m_y}^a}a ,\; {\tilde{\mathcal{F}}}^a_{2^{m_y+1}a,\epsilon}(0,z_2^a,z_3^a)\Big]\\
=&\,\mathbb{P}^a\Big[\tilde{\mathcal{F}}^a_{2^{m_y+1}a,\epsilon}(0,z_2^a,z_3^a) \big | \mathcal{F}_{2^{m_y+1}a,\epsilon}(0)\Big]\\
&\qquad\times \Big(\mathbb{P}^a\Big[\mathcal{C}^a_{2^{m_y+1}a,\epsilon}(0)\big| -a\xleftrightarrow[B_{m_y-1}^a]{B;B_{m_y}^a}a ,\; {\tilde{\mathcal{F}}}^a_{2^{m_y+1}a,\epsilon}(0,z_2^a,z_3^a)\Big]+o(1)\Big), \text{ as } a\to 0,
\end{align*}
where, in the second equality, we have also used the fact that the event $\{-a \xleftrightarrow[B_{m_y-1}^a]{B;B_{m_y}^a} a\}$ is independent of both $\tilde{\mathcal{F}}^a_{2^{m_y+1}a,\epsilon}(0,z_2^a,z_3^a)$ and $\mathcal{F}_{2^{m_y+1}a,\epsilon}(0)$.

First, we deal with the outer connection $\tilde{\mathcal{F}}^a_{2^{m_y+1}a,\epsilon}(0,z_2^a,z_3^a)$. Let $0<\eta\ll 1$ and consider
\begin{align*}
&\pi_a^{-2}\times\Big| \mathbb{P}^a\Big[\tilde{\mathcal{F}}^a_{{2^{m_y+1}a},\epsilon}(0,z_2^a,z_3^a) \big | \mathcal{F}_{2^{m_y+1}a,\epsilon}(0)\Big]- \mathbb{P}^a\Big[\tilde{\mathcal{F}}^a_{\eta,\epsilon}(0,z_2^a,z_3^a) \big | \mathcal{F}_{\eta,\epsilon}(0)\Big]\Big|\\
& \quad = \pi_a^{-2}\times \mathbb{P}^a\Big[z_2\xleftrightarrow{B}\partial B_{\epsilon}(z_2^a)\Big]\mathbb{P}^a\Big[z_3\xleftrightarrow{B}\partial B_{\epsilon}(z_3^a)\Big]\\
&\qquad\times \Big|\mathbb{P}^a\Big[\tilde{\mathcal{F}}^a_{{2^{m_y+1}a},\epsilon}(0,z_2^a,z_3^a) \big | \mathcal{F}_{2^{m_y+1}a,\epsilon}(0),\, z_k^a\xleftrightarrow{B}\partial B_{\epsilon}(z_k^a),\, k=2,3\Big] \\
&\qquad\qquad\qquad -\mathbb{P}^a\Big[\tilde{\mathcal{F}}^a_{\eta,\epsilon}(0,z_2^a,z_3^a)\big | \mathcal{F}_{\eta,\epsilon}(0), \, z_k^a\xleftrightarrow{B}\partial B_{\epsilon}(z_k^a),\, k=2,3\Big]\Big|.
\end{align*}
It follows from~\cite{GarbanPeteSchrammPivotalClusterInterfacePercolation} (see the first limit in the third displayed equation on page 999) that 
\begin{align} \label{eqn::GPS999}
    \lim_{a\to 0}\pi_a^{-2}\times \mathbb{P}^a\Big[z_2\xleftrightarrow{B}\partial B_{\epsilon}(z_2^a)\Big]\mathbb{P}^a\Big[z_3\xleftrightarrow{B}\partial B_{\epsilon}(z_3^a)\Big] =\epsilon^{-\frac{5}{24}}.
\end{align}
Moreover, as in the proof ofLemma~\ref{lem::four_arm_coupling_argument}, one can find a coupling between the two measures 
\begin{align*}
\mathbb{P}^a\Big[\cdot \big| \mathcal{F}_{2^{m_y+1}a,\epsilon}(0),\, z_k^a\xleftrightarrow{B}\partial B_{\epsilon}(z_k^a),\, k=2,3\Big]\text{ and }\mathbb{P}^a\Big[\cdot | \mathcal{F}_{\eta,\epsilon}(0),\,z_k^a\xleftrightarrow{B}\partial B_{\epsilon}(z_k^a),\, k=2,3\Big]
\end{align*}
showing that
\begin{align*}
%&\pi_a^{-2}\times\Big| \mathbb{P}^a\Big[\tilde{\mathcal{F}}^a_{{2^{m_y+1}a},\epsilon}(0,z_2^a,z_3^a) \big | \mathcal{F}_{2^{m_y+1}a,\epsilon}(0)\Big]- \mathbb{P}^a\Big[\tilde{\mathcal{F}}^a_{\eta,\epsilon}(0,z_2^a,z_3^a) \big | \mathcal{F}_{\eta,\epsilon}(0)\Big]\Big|\\
%& \quad = \pi_a^{-2}\times \mathbb{P}^a\Big[z_2\xleftrightarrow{B}\partial B_{\epsilon}(z_2^a)\Big]\mathbb{P}^a\Big[z_3\xleftrightarrow{B}\partial B_{\epsilon}(z_3^a)\Big]\\
%&\qquad\times 
&\Big|\mathbb{P}^a\Big[\tilde{\mathcal{F}}^a_{{2^{m_y+1}a},\epsilon}(0,z_2^a,z_3^a) \big | \mathcal{F}_{2^{m_y+1}a,\epsilon}(0),\, z_k^a\xleftrightarrow{B}\partial B_{\epsilon}(z_k^a),\, k=2,3\Big] \\
&\qquad\qquad\qquad -\mathbb{P}^a\Big[\tilde{\mathcal{F}}^a_{\eta,\epsilon}(0,z_2^a,z_3^a)\big | \mathcal{F}_{\eta,\epsilon}(0), \, z_k^a\xleftrightarrow{B}\partial B_{\epsilon}(z_k^a),\, k=2,3\Big]\Big|\\
&\qquad\lesssim \left(\min\left\{\frac{1}{\eta}, 2^{M-m_y}\right\}\right)^{-c_*}
\end{align*}
for some constant $c_*\in (0,\infty)$.

Letting $a\to 0$ shows that %, for some constant $c_*>0$, we have
\begin{align*}
&\limsup_{a\to 0} \Big|\mathbb{P}^a\Big[\tilde{\mathcal{F}}^a_{{2^{m_y+1}a},\epsilon}(0,z_2^a,z_3^a) \big | \mathcal{F}_{2^{m_y+1}a,\epsilon}(0),\, z_k^a\xleftrightarrow{B}\partial B_{\epsilon}(z_k^a),\, k=2,3\Big] \\
&\qquad\qquad\qquad -\mathbb{P}^a\Big[\tilde{\mathcal{F}}^a_{\eta,\epsilon}(0,z_2^a,z_3^a)\big | \mathcal{F}_{\eta,\epsilon}(0), \, z_k^a\xleftrightarrow{B}\partial B_{\epsilon}(z_k^a),\, k=2,3\Big]\Big|\\   &\quad=\limsup_{a\to 0} \Big|\mathbb{P}^a\Big[\tilde{\mathcal{F}}^a_{{2^{m_y+1}a},\epsilon}(0,z_2^a,z_3^a) \big | \mathcal{F}_{2^{m_y+1}a,\epsilon}(0),\, z_k^a\xleftrightarrow{B}\partial B_{\epsilon}(z_k^a),\, k=2,3\Big] \\
&\quad\qquad\qquad\qquad -P\Big[\tilde{\mathcal{F}}_{\eta,\epsilon}(0,z_2,z_3)\big | \mathcal{F}_{\eta,\epsilon}(0), \, z_k\xleftrightarrow{B}\partial B_{\epsilon}(z_k),\, k=2,3\Big]\Big| \lesssim \eta^{c_*},
\end{align*}
where 
\begin{align*}
& P\Big[\tilde{\mathcal{F}}_{\eta,\epsilon}(0,z_2,z_3)\big | \mathcal{F}_{\eta,\epsilon}(0), \, z_k\xleftrightarrow{B}\partial B_{\epsilon}(z_k),\, k=2,3\Big]\\
&\quad:=\lim_{a\to 0}\mathbb{P}^a\Big[\tilde{\mathcal{F}}^a_{\eta,\epsilon}(0,z_2^a,z_3^a)\big | \mathcal{F}_{\eta,\epsilon}(0), \, z_k^a\xleftrightarrow{B}\partial B_{\epsilon}(z_k^a),\, k=2,3\Big]
\end{align*}
and the existence of the last limit follows from the same arguments as in the proof of~\cite[Theorem~1.5]{Cam23}.
 Standard RSW arguments show that $P\Big[\tilde{\mathcal{F}}_{\eta,\epsilon}(0,z_2,z_3)\big | \mathcal{F}_{\eta,\epsilon}(0), \, z_k\xleftrightarrow{B}\partial B_{\epsilon}(z_k),\, k=2,3\Big]$ is bounded away from $0$ and $1$ uniformly in $\eta$. Therefore, there exists a sequence $\{\eta_n\}_{n=1}^{\infty}$ with $\lim_{n\to \infty}\eta_n=0$ such that the limit 
\[\lim_{n\to \infty}P\Big[\tilde{\mathcal{F}}_{\eta_n,\epsilon}(0,z_2,z_3)\big | \mathcal{F}_{\eta_n,\epsilon}(0), \, z_k\xleftrightarrow{B}\partial B_{\epsilon}(z_k),\, k=2,3\Big]\in (0,1)\]
exists.
Letting $n\to \infty$, we get
\begin{align*}
&\lim_{a\to 0}\mathbb{P}^a\Big[\tilde{\mathcal{F}}^a_{{2^{m_y+1}a},\epsilon}(0,z_2^a,z_3^a) \big | \mathcal{F}_{2^{m_y+1}a,\epsilon}(0),\, z_k^a\xleftrightarrow{B}\partial B_{\epsilon}(z_k^a),\, k=2,3\Big]\\
&\quad =\lim_{n\to \infty}P\Big[\tilde{\mathcal{F}}_{\eta_n,\epsilon}(0,z_2,z_3)\big | \mathcal{F}_{\eta_n,\epsilon}(0), \, z_k\xleftrightarrow{B}\partial B_{\epsilon}(z_k),\, k=2,3\Big].
\end{align*}
As the limit on the left-hand side does not depend on the choice of $\{\eta_n\}_{n=1}^{\infty}$, we have
\begin{equation} \label{eqn::one_arm_cap_four_arm}
\begin{split}
    &\lim_{a\to 0}\mathbb{P}^a\Big[\tilde{\mathcal{F}}^a_{{2^{m_y+1}a},\epsilon}(0,z_2^a,z_3^a) \big | \mathcal{F}_{2^{m_y+1}a,\epsilon}(0),\, z_k^a\xleftrightarrow{B}\partial B_{\epsilon}(z_k^a),\, k=2,3\Big]\\
&\quad =\lim_{\eta\to 0}P\Big[\tilde{\mathcal{F}}_{\eta,\epsilon}(0,z_2,z_3)\big | \mathcal{F}_{\eta,\epsilon}(0), \, z_k\xleftrightarrow{B}\partial B_{\epsilon}(z_k),\, k=2,3\Big].
\end{split}
\end{equation}

Combining~\eqref{eqn::GPS999} and~\eqref{eqn::one_arm_cap_four_arm} gives
\begin{align}
\begin{split} \label{eqn::limT3-part1}
&\lim_{a \to 0}\pi_a^{-2}\times \mathbb{P}^a\Big[\tilde{\mathcal{F}}^a_{{2^{m_y+1}a},\epsilon}(0,z_2^a,z_3^a) \big | \mathcal{F}_{2^{m_y+1}a,\epsilon}(0)\Big] \\
& \qquad = \epsilon^{-\frac{5}{24}} \times \lim_{\eta\to 0}P\Big[\tilde{\mathcal{F}}_{\eta,\epsilon}(0,z_2,z_3)\big | \mathcal{F}_{\eta,\epsilon}(0), \, z_k\xleftrightarrow{B}\partial B_{\epsilon}(z_k),\, k=2,3\Big].
\end{split}
\end{align}

Next, we deal with the inner connection $\mathcal{C}^a_{2^{m_y+1}a,\epsilon}(0)$. Let $\eta(a)$ be a  vertex in $a\mathcal{T}$ such that $\big| \eta(a)-\eta 2^{m_y}a\big|\leq a$.
Again, as in the proof of Lemma~\ref{lem::four_arm_coupling_argument},  one can find a coupling between the two measures 
\begin{align*}
\mathbb{P}^a\Big[\cdot \big| -a\xleftrightarrow[B_{m_y-1}^a]{B;B_{m_y}^a}a ,\; \tilde{\mathcal{F}}^a_{2^{m_y+1}a,{\epsilon}}(0,z_2^a,z_3^a) \Big] \text{ and }\mathbb{P}^a\Big[\cdot \big| -\eta(a)\xleftrightarrow[B_{m_y-1}^a]{B;B_{m_y}^a}\eta(a),\; \tilde{\mathcal{F}}^a_{2^{m_y+1}a,\epsilon}(0,z_2^a,z_3^a)\Big]
\end{align*} 
showing that 
\begin{align*}
&	\Big|\mathbb{P}^a\Big[\mathcal{C}^a_{2^{m_y+1}a,\epsilon}(0)\big| -a\xleftrightarrow[B_{m_y-1}^a]{B;B_{m_y}^a}a ,\; \tilde{\mathcal{F}}^a_{2^{m_y+1}a,\epsilon}(0,z_2^a,z_3^a)\Big]\\
&	\qquad\qquad\qquad- \mathbb{P}^a\Big[\mathcal{C}^a_{2^{m_y+1}a,\epsilon}(0)\big| -\eta(a)\xleftrightarrow[B_{m_y-1}^a]{B;B_{m_y}^a}\eta(a) ,\; \tilde{\mathcal{F}}^a_{2^{m_y+1}a,{\epsilon}}(0,z_2^a,z_3^a)\Big]\Big|\lesssim \eta^{c_*}
\end{align*}
for some constant $c_*\in (0,\infty)$. Let $a_*:=2^{-m_y}$, which tends to $0$ as $a\to 0$. Rescaling the triangular lattice by $a_*$, we get
	\begin{align*}
&\mathbb{P}^a\Big[\mathcal{C}^a_{2^{m_y+1}a,\epsilon}(0)\big| -\eta(a)\xleftrightarrow[B_{m_y-1}^a]{B;B_{m_y}^a}\eta(a) ,\; \tilde{\mathcal{F}}^a_{2^{m_y+1}a,{\epsilon}}(0,z_2^a,z_3^a)\Big]\\
	&\qquad= \mathbb{P}^{a_*}\Big[\mathcal{C}^{a_*}_{2,\epsilon/(2^{m_y}a)}(0) \big|  -\frac{\eta(a)}{2^{m_y}a} \xleftrightarrow[B_1(0)]{B;B_2(0)} \frac{\eta(a)}{2^{m_y}a},\; 
    \tilde{\mathcal{F}}^{a_*}_{2,\epsilon/(2^{m_y}a)}
    (0,z_2^a/(2^{m_y}a), z_3^a/(2^{m_y}a))\Big].
	\end{align*}
Once again, as in the proof of Lemma~\ref{lem::four_arm_coupling_argument}, one can find a coupling between the two measures 
	\begin{align*}
		\mathbb{P}^{a_*}\Big[\cdot \big|  -\frac{\eta(a)}{2^{m_y}a} \xleftrightarrow[B_1(0)]{B;B_2(0)} \frac{\eta(a)}{2^{m_y}a},\; \tilde{\mathcal{F}}^{a_*}_{2,\epsilon/(2^{m_y}a)}(0,z_2^a/(2^{m_y}a), z_3^a/(2^{m_y}a))\Big]
	\end{align*}
	and 
	\begin{align*}
		\mathbb{P}^{a_*}\Big[\cdot \big| -\frac{\eta(a)}{2^{m_y}a} \xleftrightarrow[B_1(0)]{B;B_2(0)} \frac{\eta(a)}{2^{m_y}a},\; \mathring{\mathcal{F}}_{2,1/\eta}(0)\Big]
	\end{align*}
showing that 
\begin{align*}
&	\Big| \mathbb{P}^{a_*}\Big[\mathcal{C}^{a_*}_{2,\epsilon/(2^{m_y}a)}(0) \big|  -\frac{\eta(a)}{2^{m_y}a} \xleftrightarrow[B_1(0)]{B;B_2(0)} \frac{\eta(a)}{2^{m_y}a},\; \tilde{\mathcal{F}}^{a_*}_{2,\epsilon/(2^{m_y}a)}(0,z_2^a/(2^{m_y}a), z_3^a/(2^{m_y}a))\Big]\\
&\qquad\qquad - \mathbb{P}^{a_*}\Big[\mathcal{C}^{a_*}_{2,1/\eta}(0) \big|  -\frac{\eta(a)}{2^{m_y}a} \xleftrightarrow[B_1(0)]{B;B_2(0)} \frac{\eta(a)}{2^{m_y}a},\; \mathring{\mathcal{F}}_{2,1/\eta}(0)\Big]\Big| \lesssim \left(\min\left\{\frac{1}{\eta}, 2^{M-m_y}\right\}\right)^{-c_*}
\end{align*}
for some constant $c_*\in (0,\infty)$.
Letting $a\to 0$, we get 
    	\begin{align*}
&	\Big| \limsup_{a\to 0}\mathbb{P}^a\Big[\mathcal{C}^a_{2^{m_y+1}a,\epsilon}(0)\big| -a\xleftrightarrow[B_{m_y-1}^a]{B;B_{m_y}^a}a ,\; \tilde{\mathcal{F}}^a_{2^{m_y+1}a,{\epsilon}}(0,z_2^a,z_3^a)\Big]\\
	&\qquad\qquad - 	P\Big[\mathcal{C}_{2,1/\eta}(0) \big| -\eta\xleftrightarrow[B_1(0)]{B;B_2(0)} \eta,\; \mathring{\mathcal{F}}_{2,1/\eta}(0)\Big]\Big| \lesssim \eta^{c_*},\\
    &	\Big| \liminf_{a\to 0}\mathbb{P}^a\Big[\mathcal{C}^a_{2^{m_y+1}a,\epsilon}(0)\big| -a\xleftrightarrow[B_{m_y-1}^a]{B;B_{m_y}^a}a ,\; \tilde{\mathcal{F}}^a_{2^{m_y+1}a,{\epsilon}}(0,z_2^a,z_3^a)\Big]\\
	&\qquad\qquad - 	P\Big[\mathcal{C}_{2,1/\eta}(0) \big| -\eta\xleftrightarrow[B_1(0)]{B;B_2(0)} \eta,\; \mathring{\mathcal{F}}_{2,1/\eta}(0)\Big]\Big| \lesssim \eta^{c_*},
	\end{align*}
    where 
    \begin{align*}
        P\Big[\mathcal{C}_{2,1/\eta}(0)\big| -\eta\xleftrightarrow[B_1(0)]{B;B_{2}(0)}\eta,\; \mathring{\mathcal{F}}_{2,1/\eta}(0)\Big]:=\lim_{a\to 0} \mathbb{P}^{a_*}\Big[\mathcal{C}_{2,1/\eta}^{a_*}(0) \Big|\frac{\eta(a)}{2^{m_y}a} \xleftrightarrow[B_1(0)]{B;B_2(0)} \frac{\eta(a)}{2^{m_y}a},\; \mathring{\mathcal{F}}_{2,1/\eta}(0)
    \Big]
    \end{align*}
    and the existence of the limit can be shown using the strategy in~\cite[Proof of Theorem~1.1]{Cam23}. 
    Standard RSW arguments show that $P\Big[\mathcal{C}_{2,1/\eta}(0)\big| -\eta\xleftrightarrow[B_1(0)]{B;B_{2}(0)}\eta,\; \mathring{\mathcal{F}}_{2,1/\eta}(0)\Big]$ is bounded away from $0$ and $1$, uniformly in $\eta\to 0$. Therefore, there exists a sequence $\{\eta_n\}_{n=1}^{\infty}$ with $\lim_{n\to \infty}\eta_n=0$ such that the limit $\lim_{n\to \infty}P\Big[\mathcal{C}_{2,1/\eta_n}(0) \big| -\eta_n\xleftrightarrow[B_1(0)]{B;B_2(0)} \eta_n,\; \mathring{\mathcal{F}}_{2,1/\eta_n}(0)\Big]$ exists. Letting $n\to \infty$, we get 
{\small   \begin{align*}
	\lim_{a\to 0}\mathbb{P}^a\Big[\mathcal{C}^a_{2^{m_y+1}a,\epsilon}(0)\big| -a\xleftrightarrow[B_{m_y-1}^a]{B;B_{m_y}^a}a ,\; \tilde{\mathcal{F}}^a_{2^{m_y+1}a,{\epsilon}}(0,z_2^a,z_3^a)\Big]= \lim_{n\to \infty}P\Big[\mathcal{C}_{2,1/\eta_n}(0) \big| -\eta_n\xleftrightarrow[B_1(0)]{B;B_2(0)} \eta_n,\; \mathring{\mathcal{F}}_{2,1/\eta_n}(0)\Big].
    \end{align*}}
    Since the limit on the left-hand side does not depend on the choice of $\{\eta_n\}_{n=1}^{\infty}$, we have
	\begin{align}\label{eqn::change_of_limits_aux6}
	\lim_{a\to 0}\mathbb{P}^a\Big[\mathcal{C}^a_{2^{m_y+1}a,\epsilon}(0)\big| -a\xleftrightarrow[B_{m_y-1}^a]{B;B_{m_y}^a}a ,\; \tilde{\mathcal{F}}^a_{2^{m_y+1}a,{\epsilon}}(0,z_2^a,z_3^a)\Big]= \lim_{\eta\to 0} P\Big[\mathcal{C}_{2,1/\eta}(0)\big| -\eta\xleftrightarrow[B_1(0)]{B;B_2(0)}\eta, \; \mathring{\mathcal{F}}_{2,1/\eta}(0)\Big].
	\end{align}
    
    To summarize, combining~\eqref{eqn::change_of_limits_aux6} with~\eqref{eqn::limT3-part1}, we have shown that 
	\begin{equation}
	    \begin{split}
	        \lim_{a\to 0 }\pi_{a}^{-2}\times T_{m_y,3}^a=& \; \epsilon^{-\frac{5}{24}}\lim_{\eta\to 0}P\Big[\tilde{\mathcal{F}}_{\eta,\epsilon}(0,z_2,z_3)\big | \mathcal{F}_{\eta,\epsilon}(0), \, z_k\xleftrightarrow{B}\partial B_{\epsilon}(z_k),\, k=2,3\Big] \\
    &\quad\times \lim_{\eta\to 0} P\Big[\mathcal{C}_{2,1/\eta}(0)\big| -\eta\xleftrightarrow[B_1(0)]{B;B_2(0)}\eta, \; \mathring{\mathcal{F}}_{2,1/\eta}(0)\Big].
	    \end{split}
	\end{equation}
		A similar analysis in the scaling limit shows that
	\begin{equation}\label{eqn::change_of_limits_aux4}
\begin{split}
			&	\lim_{m\to\infty}\lim_{z,z'\to z_1} P\Big[z_2\xleftrightarrow[{\hat{B}_m^c}]{B} z_3 \big| z\xleftrightarrow[{\hat{B}_{m-1}}]{B;{\hat{B}_m}}z',\; \mathcal{F}_{2^m|z'-z|,\epsilon}\Big(\frac{z+z'}{2}\Big)\Big]\\
	&\qquad\qquad= \epsilon^{-\frac{5}{24}}\lim_{\eta\to 0}P\Big[\tilde{\mathcal{F}}_{\eta,\epsilon}(0,z_2,z_3)\big | \mathcal{F}_{\eta,\epsilon}(0), \, z_k\xleftrightarrow{B}\partial B_{\epsilon}(z_k),\, k=2,3\Big] \\
    &\qquad\qquad\qquad\times \lim_{\eta\to 0} P\Big[\mathcal{C}_{2,1/\eta}(0)\big| -\eta\xleftrightarrow[B_1(0)]{B;B_2(0)}\eta, \; \mathring{\mathcal{F}}_{2,1/\eta}(0)\Big]=\lim_{a\to 0}\pi_a^{-2}\times\,T_{m_y,3}^a.
\end{split}
		\end{equation}

	\item Similarly, proceeding as in the analysis of $T_{m_y,3}^a$, one can show that 
	\begin{align}\label{eqn::change_of_limits_aux5}
		\lim_{a\to 0}\pi_a^{-2} \times T_{m_y,4}^a= &\lim_{m\to \infty}\lim_{z,z'\to z_1}  P\Big[z_2\xleftrightarrow[\hat{B}^c_m]{B}z_3 \big| \mathcal{F}_{2^{m}|z'-z|,\epsilon}\Big(\frac{z+z'}{2}\Big)\Big].
	\end{align}
\end{itemize}

To conclude the proof of~\eqref{eqn::change_of_limits}, suppose that $z^a,(z')^a\in a\mathcal{T}$ satisfy $z^a\to z$ and $(z')^a\to z'$, as $a\to 0$.
By definition and letting $\hat{B}^a_{m}:=B_{2^m|(z')^a-z^a|}\big(\frac{z^a+(z')^a}{2}\big)$, we have
\begin{align*}
   & P\Big[z\xleftrightarrow[\hat{B}_{m-1}]{B}z',\ z_2\xleftrightarrow[\hat{B}_{m}^c]{B}z_3\big|z\xleftrightarrow{B;\hat{B}_{m}}z',\, \mathcal{F}_{2|z'-z|,2^{m-1}|z'-z|}\big(\tfrac{z+z'}{2}\big),\, \mathcal{F}_{2^{m}|z'-z|,\epsilon}(\tfrac{z+z'}{2})\Big]  \notag\\
	& \quad - P\Big[z\xleftrightarrow[\hat{B}_{m-1}]{B}z' \big| z\xleftrightarrow{B;\hat{B}_{m}}z',\, \mathcal{F}_{2|z'-z|,2^{m-1}|z'-z|}\big(\tfrac{z+z'}{2}\big)\Big]P\Big[z_2\xleftrightarrow[\hat{B}_{m}^c]{B}z_3\big|\mathcal{F}_{2^{m}|z'-z|,\epsilon}\big(\tfrac{z+z'}{2}\big)\Big]\\
    &= \lim_{a\to 0}\pi_a^{-2}\times\bigg(\mathbb{P}^a\Big[z^a\xleftrightarrow[\hat{B}^a_{m-1}]{B}(z')^a,\ z^a_2\xleftrightarrow[(\hat{B}^a_{m})^c]{B}z^a_3\big|z^a\xleftrightarrow{B;\hat{B}^a_{m}}(z')^a, \mathcal{F}_{2|(z')^a-z^a|,2^{m-1}|(z')^a-z^a|}\Big(\tfrac{z^a+(z')^a}{2}\Big),\\
&\qquad\qquad\qquad\qquad\qquad\qquad\qquad\qquad\qquad\qquad\qquad\qquad\mathcal{F}_{2^{m}|(z')^a-(z)^a|,\epsilon}\Big(\tfrac{z^a+(z')^a}{2}\Big)\Big]  \notag\\
	&  - \mathbb{P}^a\Big[z^a\xleftrightarrow[\hat{B}^a_{m-1}]{B}(z')^a \big| z^a\xleftrightarrow{B;\hat{B}^a_{m}}(z')^a,\, \mathcal{F}_{2|(z')^a-z^a|,2^{m-1}|(z')^a-z^a|}\Big(\tfrac{z^a+(z')^a}{2}\Big)\Big]\\
&\qquad\qquad\qquad\qquad\qquad\qquad\qquad\qquad\qquad\qquad\qquad\qquad \mathbb{P}^a\Big[z^a_2\xleftrightarrow[(\hat{B}^a_{m})^c]{B}z_3^a\big|\mathcal{F}_{2^{m}|(z')^a-z^a|,\epsilon}\Big(\tfrac{z^a+(z')^a}{2}\Big)\Big]\bigg)\\
&=\lim_{a\to 0} \pi_a^{-2}\times \mathbb{P}^a\Big[z^a\xleftrightarrow[\hat{B}^a_{m-1}]{B}(z')^a \big| z^a \xleftrightarrow{B;\hat{B}^a_{m}} (z')^a,\; \mathcal{F}_{2|(z')^a-z^a|,2^{m-1}|(z')^a-z^a|}\Big(\tfrac{z^a+(z')^a}{2}\Big) \Big]
	\\
	&\qquad\quad\times\bigg(\mathbb{P}^a\Big[ z_2^a\xleftrightarrow[(\hat{B}^a_{m})^c]{B}z_3^a\big| z^a\xleftrightarrow[\hat{B}^a_{m-1}]{B;\hat{B}^a_{m}} (z')^a,\;  \mathcal{F}_{2^{m_y}|(z')^a-z^a|,\epsilon}\Big(\tfrac{z^a+(z')^a}{2}\Big)\Big]\\
    &\qquad\qquad\qquad\qquad\qquad\qquad\qquad\qquad\qquad\qquad - \mathbb{P}^a\Big[z_2^a\xleftrightarrow[(\hat{B}^a_{m})^c]{B}z_3^a \big| \mathcal{F}_{2^{m_y}|(z')^a-z^a|,\epsilon}\Big(\tfrac{z^a+(z')^a}{2}\Big)\Big] \bigg)\\
    &=P\Big[z\xleftrightarrow[\hat{B}_{m-1}]{B}z'\big|z\xleftrightarrow{B;\hat{B}_{m}}z',\, \mathcal{F}_{2|z'-z|,2^{m-1}|z'-z|}(\tfrac{z+z'}{2})\Big] \\
    &\qquad\quad\times \bigg(P\Big[z_2\xleftrightarrow[\hat{B}_{m}^c]{B}z_3\big|z\xleftrightarrow[\hat{B}_{m-1}]{B;\hat{B}_{m}}z',\, \mathcal{F}_{2^{m}|z'-z|,\epsilon}(\tfrac{z+z'}{2})\Big] -P\Big[z_2\xleftrightarrow[\hat{B}_{m}^c]{B}z_3\big|\mathcal{F}_{2^{m}|z'-z|,\epsilon}(\tfrac{z+z'}{2})\Big]\bigg).
\end{align*}
Combining this with~\eqref{eqn::decomp-T234}, \eqref{eqn::change_of_limits_aux3}, \eqref{eqn::change_of_limits_aux4} and~\eqref{eqn::change_of_limits_aux5} gives~\eqref{eqn::change_of_limits}, as desired.

Finally, combining~\eqref{eqn::T^a_m}, \eqref{eqn::four_arm_quasi}, \eqref{eqn::lim=C^*_3} and~\eqref{eqn::change_of_limits}, %~\eqref{eqn::three_point_spin_energy_aux1} - \eqref{eqn::three_point_spin_energy_aux2},
we have that 
%{\color{red}\tt Federico: I have split the eq. below in two and I've changed $B_m$ to $\hat{B}_m$, etc.}
\begin{align*}
& \lim_{a\to 0} a^{-\frac{5}{4}} \pi_a^{-2} \times T_{m_y}^a = C^*_1 C^*_2 C^*_3 C_4^*\epsilon^{-\frac{5}{4}} \\
& \quad\times  \lim_{m\to \infty} \lim_{z,z'\to z_1} \bigg(P\Big[z\xleftrightarrow[\hat{B}_{m-1}]{B}z',\ z_2\xleftrightarrow[\hat{B}_{m}^c]{B}z_3\big|z\xleftrightarrow{B;\hat{B}_{m}}z',\, \mathcal{F}_{2|z'-z|,2^{m-1}|z'-z|}\big(\tfrac{z+z'}{2}\big),\, \mathcal{F}_{2^{m}|z'-z|,{\epsilon}}(\tfrac{z+z'}{2})\Big]   \\
& \qquad - P\Big[z\xleftrightarrow[\hat{B}_{m-1}]{B}z' \big| z\xleftrightarrow{B;\hat{B}_{m}}z',\, \mathcal{F}_{2|z'-z|,2^{m-1}|z'-z|}\big(\tfrac{z+z'}{2}\big)\Big]P\Big[z_2\xleftrightarrow[\hat{B}_{m}^c]{B}z_3\big|\mathcal{F}_{2^{m}|z'-z|,{\epsilon}}\big(\tfrac{z+z'}{2}\big)\Big]\bigg).
\end{align*}
Furthermore, using the definitions of the terms in the right-hand side, as in the calculation just above, we obtain
\begin{equation} \label{eqn::three_point_spin_energy_aux0}
\begin{split}
& \lim_{a\to 0} a^{-\frac{5}{4}} \pi_a^{-2} \times T_{m_y}^a = C^*_1 C^*_2 C^*_3 C^*_4 \epsilon^{-\frac{5}{4}} \lim_{m\to \infty} \lim_{z,z'\to z_1}\Big( P\Big[ z\xleftrightarrow{B;\hat{B}_{m}}z' \big| \mathcal{F}_{2|z'-z|,2^{m-1}|z'-z|}\big(\tfrac{z+z'}{2}\big) \Big]\Big)^{-1}\\
&\qquad\qquad\qquad\times\Big(P\Big[\mathcal{F}_{2|z'-z|,2^{m-1}|z'-z|}\big(\tfrac{z+z'}{2}\big)\Big] P\Big[\mathcal{F}_{2^m|z'-z|,{\epsilon}}\big(\tfrac{z+z'}{2}\big)\Big]\Big)^{-1} \\
& \qquad\qquad\qquad \times \Big( P\Big[ z\xleftrightarrow[\hat{B}_{m-1}]{B;\hat{B}_{m}}z', z_2\xleftrightarrow[\hat{B}^c_m]{B}z_3 \Big] - P\Big[ z\xleftrightarrow[\hat{B}_{m-1}]{B;\hat{B}_{m}}z' \Big] P\Big[ z_2\xleftrightarrow[\hat{B}^c_m]{B}z_3 \Big] \Big),
\end{split}
\end{equation}
where the last equality follows from~\eqref{eqn::Tm_decom} replacing $(z_1,z_2,z_3,z_4)$ with $(z,z',z_2,z_3)$.

To conclude the proof, note that,
\begin{itemize}
\item according to~\eqref{eqn::limit_two_point}, there exists a constant $C_5^*=C_*\in (0,\infty)$ such that
	\begin{align} \label{eqn::C^*_4}
		\lim_{m\to \infty}\lim_{z,z'\to z_1} |z'-z|^{\frac{5}{24}} \times P\Big[z\xleftrightarrow{B;\hat{B}_{m}}z'\big| \mathcal{F}_{2|z'-z|,2^{m-1}|z'-z|}\big(\tfrac{z+z'}{2}\big) \Big]=C_5^*,
		\end{align}
\item there exists a constant $C_6^*\in (0,\infty)$ such that
        	\begin{align} \label{eqn::three_point_spin_energy_aux2}
			\begin{split}
				& \lim_{m\to \infty} \lim_{z,z'\to z_1} |z'-z|^{-\frac{5}{4}} \times P\Big[\mathcal{F}_{2|z'-z|,2^{m-1}|z'-z|}\big(\tfrac{z+z'}{2}\big)\Big] P\Big[\mathcal{F}_{2^{m}|z'-z|,\epsilon}\big(\tfrac{z+z'}{2}\big)\Big] \\	
                & \quad=\lim_{m\to \infty}P\Big[\mathcal{F}_{2^{2-m},1}(0)\Big]\lim_{z,z'\to z_1} |z'-z|^{-\frac{5}{4}}\times P\Big[\mathcal{F}_{2^m|z'-z|/\epsilon,1}(0)\Big]\\
                        &\quad = C_8 \, \epsilon^{-\frac{5}{4}} \, \lim_{m\to\infty}P\Big[\mathcal{F}_{2^{2-m},1}(0)\Big]\times 2^{\frac{5}{4}m}\\
                &\quad = C_8^2 \, 2^{\frac{5}{2}} \, \epsilon^{-\frac{5}{4}}=C_1\epsilon^{-\frac{5}{4}},
			\end{split}
		\end{align}
        where the first equality is due to the conformal invariance of the full-plane, nested $\mathrm{CLE}_6$, the second and third equalities follow from~\cite[Corollary~2.4]{CF24}, and where $C_8\in (0,\infty)$ is the constant in~\cite[Corollary~2.4]{CF24}.
	%	\begin{align} 
	%		\begin{split}
		%		& \lim_{m\to \infty} \lim_{z,z'\to z_1} |z'-z|^{-\frac{5}{4}} \times P\Big[\mathcal{F}_{2|z'-z|,2^{m-1}|z'-z|}(\tfrac{z+z'}{2})\Big] P\Big[\mathcal{F}_{2^{m}|z'-z|,\epsilon}(\tfrac{z+z'}{2})\Big] \\
		%		& \quad = \lim_{m\to \infty} \lim_{z,z'\to z_1} \Big(|z'-z|^{-\frac{5}{4}} \times P\Big[\mathcal{F}_{2|z'-z|,\epsilon}(\tfrac{z+z'}{2})\Big]\Big) \\
		%		& \qquad \times \frac{P\Big[\mathcal{F}_{2|z'-z|,2^{m-1}|z'-z|}(\tfrac{z+z'}{2})\Big] P\Big[\mathcal{F}_{2^{m}|z'-z|,\epsilon}(\tfrac{z+z'}{2})\Big]}{P\Big[\mathcal{F}_{2|z'-z|,\epsilon}(\tfrac{z+z'}{2})\Big]} \\
		%		& \quad = C_6^* \epsilon^{-\frac{5}{4}},
		%	\end{split}
		%\end{align}
		%where, in the last equality, we have used~\cite[Corollary~2.4]{CF24} and~\eqref{eqn::four_arm_quasi_multiply}.
\end{itemize}
Plugging~\eqref{eqn::C^*_4} and~\eqref{eqn::three_point_spin_energy_aux2} into~\eqref{eqn::three_point_spin_energy_aux0}, we conclude that
\begin{align}\label{eqn::three_point_spin_energy_aux3}
	\lim_{a\to 0} a^{-\frac{5}{4}} \pi_a^{-2} \times T_{m_y}^a=\frac{C_1^*C_2^*C_3^*C_4^*}{C_5^*C_6^*}\lim_{m\to \infty}\lim_{z,z'\to z} |z'-z|^{-\frac{25}{24}} \times T_m(z,z',z_2,z_3),
\end{align}
where $T_m(z,z',z_2,z_3)$ is the same term as in Equation~\eqref{eqn::four_spin_decom} in~Section~\ref{sec::sketch_four_spin}, but with $(z_1,z_2,z_3,z_4)$ replaced by $(z,z',z_2,z_3)$.
According to Remark~\ref{rem::truncated_four_point}, we have
\begin{align} \label{eqn::three_point_spin_energy_aux4}
	 \lim_{z,z'\to z_1} \frac{P\Big[z\xleftrightarrow{B}z',\; z_2\xleftrightarrow{B}z_3\Big]-P\Big[z\xleftrightarrow{B}z'\Big]P\Big[z_2\xleftrightarrow{B}z_3\Big]}{|z'-z|^{\frac{25}{24}}|\log |z'-z||}= \frac{1}{\log 2} \lim_{m\to \infty}\lim_{z,z'\to z} |z'-z|^{-\frac{25}{24}} \times T_m(z,z',z_2,z_3).
\end{align}
It follows from a standard application of RSW estimates (see e.g., the proofs of Lemmas 2.1 and 2.2 of~\cite{CamiaNewman2009ising}) that $a^{-5/4}\pi_a^{-2} T_m^a$ is uniformly bounded in $a$ and $m$.
Combining~\eqref{eqn::three_point_spin_energy_aux3} with~\eqref{eqn::three_point_spin_energy_aux4} and using the dominated convergence theorem, we obtain~\eqref{eqn::existence-limit-sum} and~\eqref{eq::lim-sum-T_m} with $C:=(\log 2) C_1^* C_2^*C_3^*C_4^*/(C_5^*C_6^*$), as desired.
\end{proof}

\begin{proof}[Proof of~\eqref{eqn::mixed_correlation_spin_energy} in Theorem~\ref{thm::energy_spin} with $\ell=2$ and $k=1$. ]
We choose $\epsilon>0$ such that $B_{2\epsilon}(z_i)\cap B_{2\epsilon}(z_j)=\emptyset$ for $i\neq j$. Let $M$ be the largest integer such that $2^M (2a)\leq \epsilon$; note that $M$ is of order $-\log a$. For $1\leq i\leq 2$ and $1\leq m\leq M$, define $B_{m}^{(i,a)}:=\{w: |w-z_i^a|\leq 2^m(2a)\}$.  	

\medskip

\noindent\textbf{Step 1.} We claim that, for $i=1,2$, we have 
\begin{align*}
		&\Big| \mathbb{P}^a\Big[z_i^{(a,-)}\xleftrightarrow{B}z_3^a\centernot{\xleftrightarrow{B}}z_i^{(a,+)}\xleftrightarrow{B}z_4^a\Big] \mathbb{P}^a\Big[z_{3-i}^{(a,-)}\xleftrightarrow{B}z_{3-i}^{(a,+)}\Big]\\
	&\qquad\qquad\qquad\qquad	-\mathbb{P}^a\Big[z_i^{(a,-)}\xleftrightarrow{B}z_3^a\centernot{\xleftrightarrow{B}}z_i^{(a,+)}\xleftrightarrow{B}z_4^a,\; z_{3-i}^{(a,-)}\xleftrightarrow{B}z_{3-i}^{(a,+)}\Big]\Big|\lesssim a^{\frac{5}{2}} |\log a|\pi_a^2, \\
			&\Big| \mathbb{P}^a\Big[z_i^{(a,-)}\xleftrightarrow{B}z_4^a\centernot{\xleftrightarrow{B}}z_i^{(a,+)}\xleftrightarrow{B}z_3^a\Big] \mathbb{P}^a\Big[z_{3-i}^{(a,-)}\xleftrightarrow{B}z_{3-i}^{(a,+)}\Big]\\
	&\qquad\qquad\qquad\qquad	-\mathbb{P}^a\Big[z_i^{(a-)}\xleftrightarrow{B}z_4^a\centernot{\xleftrightarrow{B}}z_i^{(a,+)}\xleftrightarrow{B}z_3^a,\; z_{3-i}^{(a,-)}\xleftrightarrow{B}z_{3-i}^{(a,+)}\Big]\Big|\lesssim a^{\frac{5}{2}} |\log a|\pi_a^2.
\end{align*}
These estimates can be proved using the same type of argument used in \cite[Section~3.2]{CF24} and in the proof of~\eqref{eqn::three_point_energy_density} above, by expressing the event $\Big\{z_{3-i}^{(a,-)}\xleftrightarrow{B}z_{3-i}^{(a,+)}\Big\}$ as the following union of mutually exclusive events:
\begin{align*}
&\Big\{z_{3-i}^{(a,-)}\xleftrightarrow{B}z_{3-i}^{(a,+)}\Big\}= \Big\{z_{3-i}^{(a,-)}\xleftrightarrow{B;B_1^{(3-i,a)}} z_{3-i}^{(a,+)}\Big\} \cup \Big(\cup_{m=2}^M \Big\{z_{3-i}^{(a,-)}\xleftrightarrow[B_{m-1}^{(3-i,a)}]{B;B_{m}^{(3-i,a)}}z_{3-i}^{(a,+)}\Big\}\Big)\\
&\qquad\qquad\qquad\qquad\qquad\qquad\qquad\cup \Big\{z_{3-i}^{(a,-)}\xleftrightarrow[B_M^{(3-i,a)}]{B}z_{3-i}^{(a,-)}\Big\}.
\end{align*}
The factor $(\pi_a)^2$ comes from the one-arm events at $z^a_3$ and $z^a_4$, the factor $a^{5/2}=\big(a^{5/4}\big)^2$ comes from the four-arm events at $z_1^a$ and $z^a_2$, and the logarithm comes from the union $\cup_{m=2}^M \Big\{z_{3-i}^{(a,-)}\xleftrightarrow[B_{m-1}^{(3-i,a)}]{B;B_{m}^{(3-i,a)}}z_{3-i}^{(a,+)}\Big\}$ of disjoint events, which results in a sum of probabilities of order $a^{5/4}$ (recall that $M$ is of order $\vert \log a \vert$.)

\medskip

\noindent\textbf{Step 2.} A simple calculation shows that we can write
\begin{align*}
	\langle \mathcal{E}_{z_1^a}\mathcal{E}_{z_2^a}S_{z_3^a}S_{z_4^a}\rangle^a =&\underbrace{\mathbb{P}^a\Big[z_1^{(a,-)}\xleftrightarrow{B}z_1^{(a,+)},\; z_2^{(a,-)}\xleftrightarrow{B}z_2^{(a,+)},\; z_3^a\xleftrightarrow{B}z_4^a\Big]}_{R_1^a}\\
	&+\underbrace{\mathbb{P}^a\Big[z_1^{(a,-)}\xleftrightarrow{B}z_1^{(a,+)}\Big] \mathbb{P}^a\Big[z_2^{(a,-)}\xleftrightarrow{B}z_2^{(a,+)}\Big] \mathbb{P}^a\Big[ z_3^a\xleftrightarrow{B}z_4^a\Big]}_{R_2^a}\\
	&-\underbrace{\mathbb{P}^a\Big[z_2^{(a,-)}\xleftrightarrow{B}z_2^{(a,+)}\Big]\mathbb{P}^a\Big[ z_1^{(a,-)}\xleftrightarrow{B}z_1^{(a,+)},\; z_3^a\xleftrightarrow{B}z_4^a\Big]}_{R_3^a}\\
	&-\underbrace{\mathbb{P}^a\Big[z_1^{(a,-)}\xleftrightarrow{B}z_1^{(a,+)}\Big]\mathbb{P}^a\Big[ z_2^{(a,-)}\xleftrightarrow{B}z_2^{(a,+)},\; z_3^a\xleftrightarrow{B}z_4^a\Big]}_{R_4^a}+R_5^a+R_6^a,
\end{align*}
where $R_5^a$ is a sum of differences studied in Step 1, and 
\begin{align*}
		& R_6^a=\mathbb{P}^a\left[z_1^{(a,-)}\xleftrightarrow{B} z_2^{(a,-)}\centernot{\xleftrightarrow{B}}z_1^{(a,+)}\xleftrightarrow{B}z_2^{(a,+)},\; z_3^a\xleftrightarrow{B}z_4^a\right]\\
        &\qquad+\mathbb{P}^a\left[z_1^{(a,-)}\xleftrightarrow{B} z_2^{(a,+)}\centernot{\xleftrightarrow{B}}z_1^{(a,+)}\xleftrightarrow{B}z_2^{(a,-)},\; z_3^a\xleftrightarrow{B}z_4^a\right]\\
        &\qquad+\mathbb{P}^a\left[z_1^{(a,-)}\xleftrightarrow{B}z_2^{(a,-)}\centernot{\xleftrightarrow{B}}z_1^{(a,+)}\xleftrightarrow{B}z_3^a\centernot{\xleftrightarrow{B}}z_2^{(a,+)}\xleftrightarrow{B}z_4^a\right]\\
        &\qquad+\mathbb{P}^a\left[z_1^{(a,-)}\xleftrightarrow{B}z_2^{(a,-)}\centernot{\xleftrightarrow{B}}z_1^{(a,+)}\xleftrightarrow{B}z_4^a\centernot{\xleftrightarrow{B}}z_2^{(a,+)}\xleftrightarrow{B}z_3^a\right]\\
        &\qquad+\mathbb{P}^a\left[z_1^{(a,-)}\xleftrightarrow{B}z_2^{(a,+)}\centernot{\xleftrightarrow{B}}z_1^{(a,+)}\xleftrightarrow{B}z_3^a\centernot{\xleftrightarrow{B}}z_2^{(a,-)}\xleftrightarrow{B}z_4^a\right]\\
        &\qquad+\mathbb{P}^a\left[z_1^{(a,-)}\xleftrightarrow{B}z_2^{(a,+)}\centernot{\xleftrightarrow{B}}z_1^{(a,+)}\xleftrightarrow{B}z_4^a\centernot{\xleftrightarrow{B}}z_2^{(a,-)}\xleftrightarrow{B}z_3^a\right]\\
        &\qquad+\mathbb{P}^a\left[z_1^{(a,-)}\xleftrightarrow{B}z_3^a\centernot{\xleftrightarrow{B}}z_1^{(a,+)}\xleftrightarrow{B}z_2^{(a,+)}\centernot{\xleftrightarrow{B}}z_2^{(a,-)}\xleftrightarrow{B}z_4^a\right]\\
        &\qquad+\mathbb{P}^a\left[z_1^{(a,-)}\xleftrightarrow{B}z_3^a\centernot{\xleftrightarrow{B}}z_1^{(a,+)}\xleftrightarrow{B}z_2^{(a,-)}\centernot{\xleftrightarrow{B}}z_2^{(a,+)}\xleftrightarrow{B}z_4^a\right]\\
        &\qquad+\mathbb{P}^a\left[z_1^{(a,-)}\xleftrightarrow{B}z_4^a\centernot{\xleftrightarrow{B}}z_1^{(a,+)}\xleftrightarrow{B}z_2^{(a,+)}\centernot{\xleftrightarrow{B}}z_2^{(a,-)}\xleftrightarrow{B}z_3^a\right]\\ &\qquad+\mathbb{P}^a\left[z_1^{(a,-)}\xleftrightarrow{B}z_4^a\centernot{\xleftrightarrow{B}}z_1^{(a,+)}\xleftrightarrow{B}z_2^{(a,-)}\centernot{\xleftrightarrow{B}}z_2^{(a,+)}\xleftrightarrow{B}z_3^a\right].
\end{align*}
It follows from Step 1 that 
\begin{align*}
	|R_5^a|\lesssim a^{\frac{5}{2}}|\log a |\pi_a^2. 
\end{align*} 
Moreover, we have 
\begin{align*}
R_6^a\lesssim  a^{\frac{5}{2}} \pi_a^2,
\end{align*}
where the factor $\pi_a^2$ comes from the one-arm events at $z_3^a$ and $z_4^a$, and the factor $a^{5/2}=(a^{5/4})^2$ comes from the four-arm events at $z_1^a$ and $z_2^a$.
Therefore, we conclude that
\begin{align} \label{eqn::three_aux1} 
		\langle \mathcal{E}_{z_1^a}\mathcal{E}_{z_2^a}S_{z_3^a}S_{z_4^a}\rangle^a=\sum_{i=1}^4 R_i^a +O\big(a^{\frac{5}{2}}|\log a|\pi_a^2\big).
\end{align}

\noindent\textbf{Step 3.} With the convention that $B_{M+1}^{(1,a)}=B_{M+1}^{(2,a)}=\mathbb{C}$ and $B_0^{(1,a)}=B_{0}^{(2,a)}=\emptyset$, we can write
\begin{align*}
\sum_{i=1}^4	R_i^a=&\sum_{m=1}^{M+1}\sum_{j=1}^{M+1} \underbrace{\mathbb{P}^a\Big[z_1^{(a,-)}\xleftrightarrow[B_{m-1}^{(1,a)}]{B;B_m^{(1,a)}} z_1^{(a,+)},\; z_2^{(a,-)}\xleftrightarrow[B_{j-1}^{(2,a)}]{B;B_j^{(2,a)}}z_2^{(a,+)},\; z_3^a\xleftrightarrow{B}z_4^a\Big]}_{T_{m,j}^{(1,a)}}\\
&\qquad+\sum_{m=1}^{M+1}\sum_{j=1}^{M+1}\underbrace{\mathbb{P}^a\Big[z_1^{(a,-)}\xleftrightarrow[B_{m-1}^{(1,a)}]{B;B_m^{(1,a)}} z_1^{(a,+)}\Big]\mathbb{P}^a\Big[ z_2^{(a,-)}\xleftrightarrow[B_{j-1}^{(2,a)}]{B;B_j^{(2,a)}}z_2^{(a,+)}\Big]\mathbb{P}^a\Big[ z_3^a\xleftrightarrow{B}z_4^a\Big]}_{T_{m,j}^{(2,a)}}\\
&\qquad-\sum_{m=1}^{M+1}\sum_{j=1}^{M+1}\underbrace{\mathbb{P}^a\Big[z_2^{(a,-)}\xleftrightarrow[B_{j-1}^{(2,a)}]{B;B_j^{(2,a)}} z_2^{(a,+)}\Big]\mathbb{P}^a\Big[ z_1^{(a,-)}\xleftrightarrow[B_{m-1}^{(1,a)}]{B;B_m^{(1,a)}}z_1^{(a,+)},\; z_3^a\xleftrightarrow{B}z_4^a\Big]}_{T_{m,j}^{(3,a)}}\\
&\qquad- \sum_{m=1}^{M+1}\sum_{j=1}^{M+1}\underbrace{\mathbb{P}^a\Big[z_1^{(a,-)}\xleftrightarrow[B_{m-1}^{(1,a)}]{B;B_m^{(1,a)}} z_1^{(a,+)}\Big]\mathbb{P}^a\Big[ z_2^{(a,-)}\xleftrightarrow[B_{j-1}^{(2,a)}]{B;B_j^{(2,a)}}z_2^{(a,+)},\; z_3^a\xleftrightarrow{B}z_4^a\Big]}_{T_{m,j}^{(4,a)}}.
\end{align*}
 
Let $C,D$ be two subsets of the plane, define
\[\Big\{z_3^a\xleftrightarrow[C\oplus D]{B}z_4^a\Big\}:=\Big\{z_3^a\xleftrightarrow[C]{B}z_4^a\Big\}\cap \Big\{z_3^a\xleftrightarrow[D]{B}z_4^a\Big\}.\]

Fix $1\leq m,j\leq M+1$ and note that we can express the event $\big\{z_3^a\xleftrightarrow{B}z_4^a\big\}$ as the following union of mutually exclusive events:
\begin{align*}
	\big\{z_3^a\xleftrightarrow{B}z_4^a\big\}=&\underbrace{\Big\{z_3^a\xleftrightarrow{B;\big(B_{m}^{(1,a)}\cup B_{j}^{(2,a)}\big)^c}z_4^a\Big\}}_{E_{m,j}^{(1,a)}}\cup \underbrace{\Big\{z_3^a\xleftrightarrow[\big(B_{j}^{(2,a)}\big)^c]{B;\big(B_{m}^{(1,a)}\big)^c}z_4^a\Big\}}_{E_{m,j}^{(2,a)}}\cup \underbrace{\Big\{z_3^a\xleftrightarrow[\big(B_{m}^{(1,a)}\big)^c]{B;\big(B_{j}^{(2,a)}\big)^c}z_4^a\Big\}}_{E_{m,j}^{(3,a)}}\\
&	\qquad\qquad\qquad\qquad\qquad\cup \underbrace{\Big\{z_3^a\xleftrightarrow[\big(B_{m}^{(1,a)}\big)^c\oplus\big(B_{j}^{(2,a)}\big)^c]{B}z_4^a\Big\}}_{E_{m,j}^{(4,a)}}.
\end{align*}
Due to the independence of labels at different vertices in percolation, for $1\leq i\leq 3$, we have 
\begin{align*}
	&\mathbb{P}^a\Big[z_1^{(a,-)}\xleftrightarrow[B_{m-1}^{(1,a)}]{B;B_m^{(1,a)}} z_1^{(a,+)},\; z_2^{(a,-)}\xleftrightarrow[B_{j-1}^{(2,a)}]{B;B_j^{(2,a)}}z_2^{(a,+)},\; E_{m,j}^{(i,a)}\Big]\\
	&+\mathbb{P}^a\Big[z_1^{(a,-)}\xleftrightarrow[B_{m-1}^{(1,a)}]{B;B_m^{(1,a)}} z_1^{(a,+)}\Big]\mathbb{P}^a\Big[ z_2^{(a,-)}\xleftrightarrow[B_{j-1}^{(2,a)}]{B;B_j^{(2,a)}}z_2^{(a,+)}\Big]\mathbb{P}^a\Big[E_{m,j}^{(i,a)}\Big]\\
	&-\mathbb{P}^a\Big[z_2^{(a,-)}\xleftrightarrow[B_{j-1}^{(2,a)}]{B;B_j^{(2,a)}} z_2^{(a,+)}\Big]\mathbb{P}^a\Big[ z_1^{(a,-)}\xleftrightarrow[B_{m-1}^{(1,a)}]{B;B_m^{(1,a)}}z_1^{(a,+)},\; E_{m,j}^{(i,a)}\Big]\\
	&-\mathbb{P}^a\Big[z_1^{(a,-)}\xleftrightarrow[B_{m-1}^{(1,a)}]{B;B_m^{(1,a)}} z_1^{(a,+)}\Big]\mathbb{P}^a\Big[ z_2^{(a,-)}\xleftrightarrow[B_{j-1}^{(2,a)}]{B;B_j^{(2,a)}}z_2^{(a,+)},\; E_{m,j}^{(i,a)}\Big]=0.
\end{align*}
Consequently, we have
\begin{align} \label{eqn::three_aux2}
\begin{split}
\sum_{i=1}^4 R_i^a&=\sum_{m=1}^{M+1}\sum_{j=1}^{M+1} \underbrace{\mathbb{P}^a\Big[z_1^{(a,-)}\xleftrightarrow[B_{m-1}^{(1,a)}]{B;B_m^{(1,a)}} z_1^{(a,+)},\; z_2^{(a,-)}\xleftrightarrow[B_{j-1}^{(2,a)}]{B;B_j^{(2,a)}}z_2^{(a,+)},\; E_{m,j}^{(4,a)}\Big]}_{\hat{T}_{m,j}^{(1,a)}}\\
	&\quad+\sum_{m=1}^{M+1}\sum_{j=1}^{M+1} \underbrace{\mathbb{P}^a\Big[z_1^{(a,-)}\xleftrightarrow[B_{m-1}^{(1,a)}]{B;B_m^{(1,a)}} z_1^{(a,+)}\Big]\mathbb{P}^a\Big[ z_2^{(a,-)}\xleftrightarrow[B_{j-1}^{(2,a)}]{B;B_j^{(2,a)}}z_2^{(a,+)}\Big]\mathbb{P}^a\Big[ E_{m,j}^{(4,a)}\Big]}_{\hat{T}_{m,j}^{(2,a)}}\\
	&\quad- \sum_{m=1}^{M+1}\sum_{j=1}^{M+1} \underbrace{\mathbb{P}^a\Big[z_2^{(a,-)}\xleftrightarrow[B_{j-1}^{(2,a)}]{B;B_j^{(2,a)}} z_2^{(a,+)}\Big]\mathbb{P}^a\Big[ z_1^{(a,-)}\xleftrightarrow[B_{m-1}^{(1,a)}]{B;B_m^{(1,a)}}z_1^{(a,+)},\; E_{m,j}^{(4,a)}\Big]}_{\hat{T}_{m,j}^{(3,a)}}\\
	&\quad- \sum_{m=1}^{M+1}\sum_{j=1}^{M+1} \underbrace{\mathbb{P}^a\Big[z_1^{(a,-)}\xleftrightarrow[B_{m-1}^{(1,a)}]{B;B_m^{(1,a)}} z_1^{(a,+)}\Big]\mathbb{P}^a\Big[ z_2^{(a,-)}\xleftrightarrow[B_{j-1}^{(2,a)}]{B;B_j^{(2,a)}}z_2^{(a,+)},\; E_{m,j}^{(4,a)}\Big]}_{\hat{T}_{m,j}^{(4,a)}}.
\end{split}
\end{align}

See Figure~\ref{fig::log_geometry_energy_three_point} for an illustration of the event
\begin{align*}
&	\Big\{z_1^{(a,-)}\xleftrightarrow[B_{m-1}^{(1,a)}]{B;B_m^{(1,a)}} z_1^{(a,+)},\; z_2^{(a,-)}\xleftrightarrow[B_{j-1}^{(2,a)}]{B;B_j^{(2,a)}}z_2^{(a,+)},\; E_{m,j}^{(4,a)}\Big\}\\
&	\qquad\qquad\qquad =\Big\{ z_1^{(a,-)} \xleftrightarrow[B_{m-1}^{(1,a)}]{B;B_m^{(1,a)}} z_1^{(a,+)},\; z_2^{(a,-)}\xleftrightarrow[B_{j-1}^{(2,a)}]{B;B_j^{(2,a)}} z_2^{(a,+)},\; z_3^a\xleftrightarrow[\big(B_{m}^{(1,a)}\big)^c\oplus \big(B_{j}^{(2,a)}\big)^c]{B}z_4^a \Big\}
\end{align*}
that appears in the definition of the term $\hat{T}_{m,j}^{(1,a)}$.
    \begin{figure}
	\includegraphics[width= 0.7\textwidth]{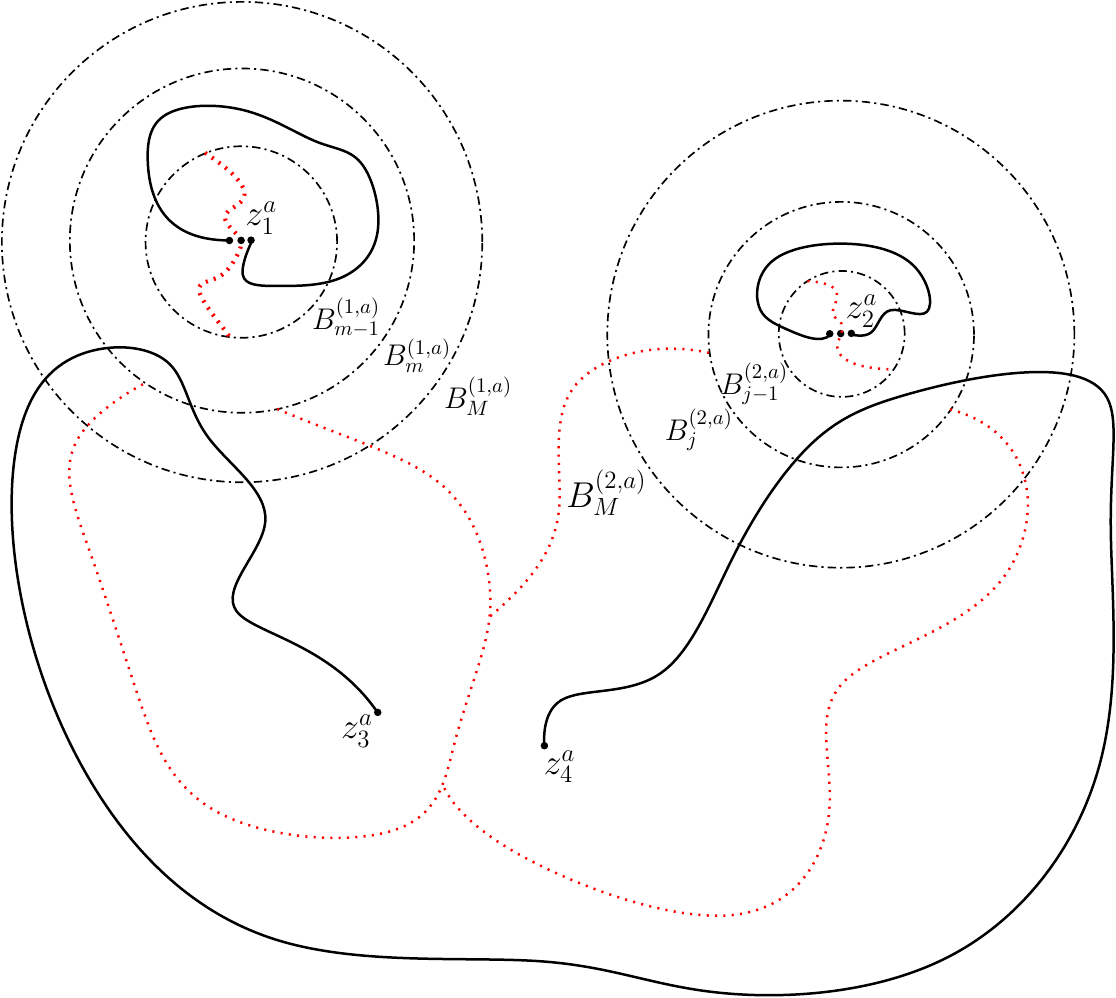}
	\caption{The event $\Big\{ z_1^{(a,-)} \xleftrightarrow[B_{m-1}^{(1,a)}]{B;B_m^{(1,a)}} z_1^{(a,+)},\; z_2^{(a,-)}\xleftrightarrow[B_{j-1}^{(2,a)}]{B;B_j^{(2,a)}} z_2^{(a,+)},\; z_3^a\xleftrightarrow[\big(B_{m}^{(1,a)}\big)^c\oplus \big(B_{j}^{(2,a)}\big)^c]{B}z_4^a \Big\}$. The black, solid lines denote black paths, while the red, dotted lines denote white paths. $z_1^{(a,-)}$ and $z_1^{(a,+)}$ are contained in $B_{m-1}^{(1,a)}$. They are not connected by a black path within the disk $B_{m-1}^{(1,a)}$, but are connected within the larger disk $B_{m}^{(1,a)}$, with radius twice that of $B_{m-1}^{(1,a)}$. 
		The points $z_2^{(a,-)}$ and $z_2^{(a,+)}$ are contained in $B_{j-1}^{(2,a)}$. They are not connected by a black path within the disk $B_{j-1}^{(2,a)}$, but are connected within the larger disk $B_{j}^{(2,a)}$, with radius twice that of $B_{j-1}^{(2,a)}$.	
		The points $z_3^a$ and $z_4^a$ are connected by a black path, but not outside $B_{m}^{(1,a)}$ nor outside $ B_j^{(2,a)}$.
		The number $M$ is of order $-\log a$.
	}
	\label{fig::log_geometry_energy_three_point}
\end{figure}

\noindent\textbf{Step 4.}
For $0<r_1,r_2<\epsilon$, we define the event $\hat{\mathcal{G}}_{r_1,r_2}^{a}(z_1^a,z_2^a,z_3^a,z_4^a)$ by
\begin{align*}
 \hat{\mathcal{G}}_{r_1,r_2}^{a}(z_1^a,z_2^a,z_3^a,z_4^a):=& \Big\{z_3^a\xleftrightarrow{B;(B_{r_1}(z_1^a)\cup B_{r_2}(z_2^a))^c}\partial B_{r_1}(z_1^a) \Big\} \cap \Big\{z_3^a\xleftrightarrow{B;(B_{r_1}(z_1^a)\cup B_{r_2}(z_2^a))^c}\partial B_{r_2}(z_2^a)\Big\}^c\\
    &\cap \Big\{z_4^a\xleftrightarrow{B;(B_{r_1}(z_1^a)\cup B_{r_2}(z_2^a))^c}\partial B_{r_2}(z_2^a) \Big\} \cap \Big\{ z_4^a\xleftrightarrow{B;(B_{r_1}(z_1^a)\cup B_{r_2}(z_2^a))^c}\partial B_{r_1}(z_1^a) \Big\}^c \\
    &\cap \Big\{\partial B_{r_1}(z_1^a)\xleftrightarrow{B}\partial B_{r_2}(z_2^a)\Big\} \cap \Big\{z_3^a\xleftrightarrow{B;(B_{r_1}(z_1^a)\cup B_{r_2}(z_2^a))^c} z_4^a\Big\}^c
\end{align*}
and we further define
\begin{align} \label{eqn::def_event_G}
\mathcal{G}_{r_1,r_2}^a(z_1^a,z_2^a,z_3^a,z_4^a):=\hat{\mathcal{G}}_{r_1,r_2}^{a}(z_1^a,z_2^a,z_3^a,z_4^a)\cup \hat{\mathcal{G}}_{r_2,r_1}^{a}(z_2^a,z_1^a,z_3^a,z_4^a).
\end{align}
See Figure~\ref{fig::outer_event} for illustrations of the events $\hat{\mathcal{G}}_{2^{m+1}a,2^{j+1}a}^{a}(z_1^a,z_2^a,z_3^a,z_4^a)$ and  $\hat{\mathcal{G}}_{2^{j+1}a,2^{m+1}a}^{a}(z_2^a,z_1^a,z_3^a,z_4^a)$.
\begin{figure}
\begin{subfigure}{1\textwidth}
		\begin{center}
\includegraphics[width=0.67\textwidth]{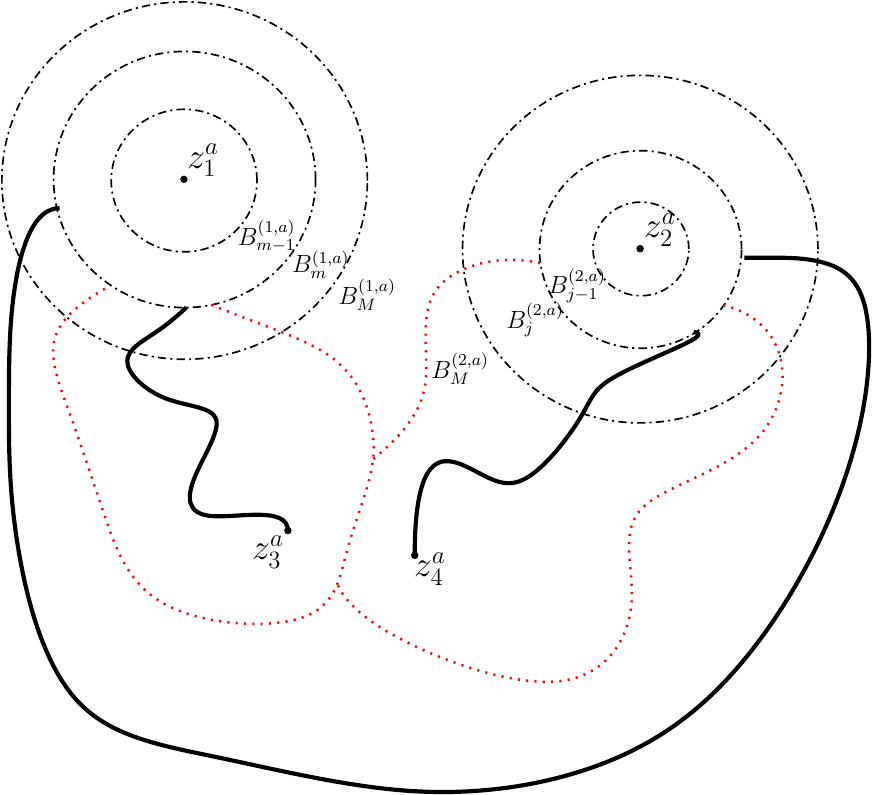}
		\end{center}
		\caption{The event $\hat{\mathcal{G}}_{2^{m+1}a,2^{j+1}a}^{a}(z_1^a,z_2^a,z_3^a,z_4^a)$; }
	\end{subfigure}
 \begin{subfigure}{1\textwidth}
		\begin{center}
\includegraphics[width=0.67\textwidth]{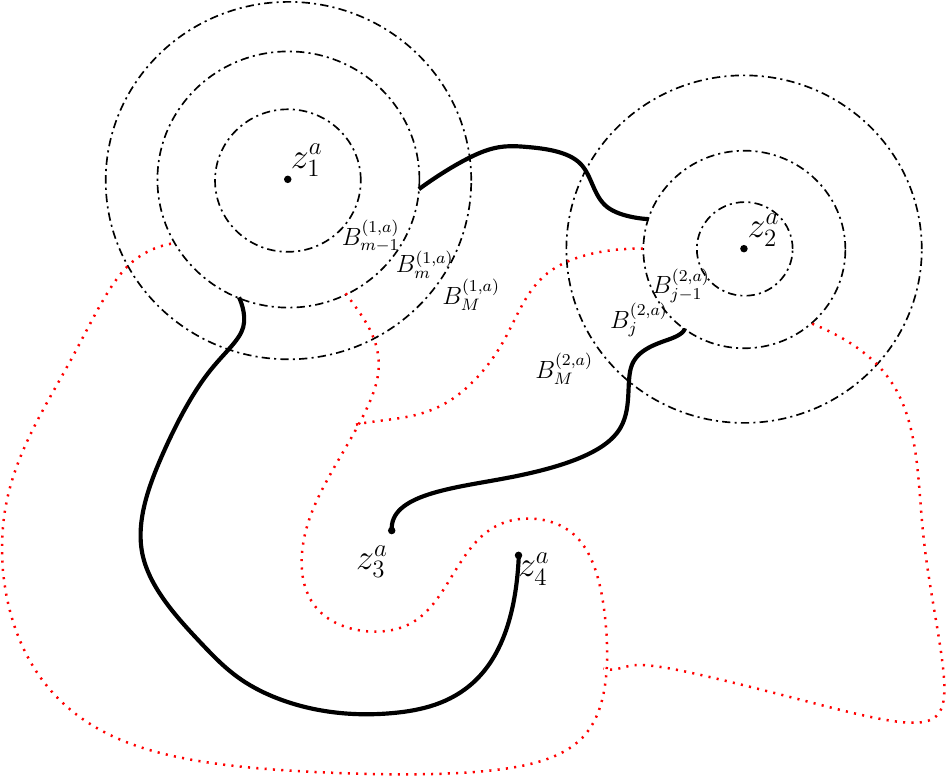}
		\end{center}
		\caption{The event $\hat{\mathcal{G}}_{2^{j+1}a,2^{m+1}a}^{a}(z_2^a,z_1^a,z_3^a,z_4^a)$; }
	\end{subfigure}
    \caption{The events $\hat{\mathcal{G}}_{2^{m+1}a,2^{j+1}a}^{a}(z_1^a,z_2^a,z_3^a,z_4^a)$ and  $\hat{\mathcal{G}}_{2^{j+1}a,2^{m+1}a}^{a}(z_2^a,z_1^a,z_3^a,z_4^a)$. The black solid lines denote black paths, while the red, dotted lines denote white paths.}
    \label{fig::outer_event}
\end{figure}
We have the following lemma.
\begin{lemma} \label{lem::four_point_energy_spin_aux}
Let $z_1,\ldots,z_4\in \mathbb{C}$ be distinct. There exists a positive-valued function $U(z_1,z_2,z_3,z_4)$ such that, for any $x,y\in (0,1)$ and $m\in \{\lfloor Mx\rfloor,\, \lceil Mx \rceil\}$, $j\in \{\lfloor My\rfloor,\, \lceil My\rceil\}$, the terms $T_{m,j}^{(i,a)}$ defined in ~\eqref{eqn::three_aux2} satisfy
\begin{align} \label{eqn::three_aux3}
\lim_{a\to 0}a^{-\frac{5}{2}}\pi_a^{-2}\times\Big(\sum_{i=1}^2 \hat{T}_{m,j}^{(i,a)}-\sum_{i=3}^4 \hat{T}_{m,j}^{(i,a)}\Big)= U(z_1,z_2,z_3,z_4)>0.
\end{align}
Moreover, we have $U(z_1,z_2,z_3,z_4)=\big(V_3-\frac{1}{2}\big)^2 \times \hat{U}(z_1,z_2,z_3,z_4)$, where $V_3>\frac{1}{2}$ is the constant in~\cite[Lemmas~3.5 and~3.6]{CF24} and
      \begin{equation} \label{eqn::def_hat_U}
\begin{split}
                \hat{U}(z_1,z_2,z_3,z_4):=&\lim_{a\to 0} a^{-\frac{5}{2}}\pi_a^{-2}\times \mathbb{P}^a\Big[z_1^{(a,-)}\xleftrightarrow[B_{m-1}^{(1,a)}]{B;B_m^{(1,a)}}z_1^{(a,+)}\Big]\times  \mathbb{P}^a\Big[z_2^{(a,-)}\xleftrightarrow[B_{j-1}^{(2,a)}]{B;B_j^{(2,a)}}z_2^{(a,+)}\Big] \\
            &\qquad\qquad\qquad\times \mathbb{P}^a\Big[\mathcal{G}^a_{2^{m+1}a,2^{j+1}a}(z_1^a,z_2^a,z_3^a,z_4^a)\Big]\\
            = & \; C \, \epsilon^{-\frac{5}{2}-\frac{5}{24}}\lim_{\eta\to 0} P\Big[\mathcal{G}_{\eta,\eta}(z_1,z_2,z_3,z_4)\big | \mathcal{F}_{\eta,\epsilon}(z_1),\, \mathcal{F}_{\eta,\epsilon}(z_2),\, z_3\xleftrightarrow{B}\partial B_{\epsilon}(z_3),\, z_4\xleftrightarrow{B} \partial B_{\epsilon}(z_4)\Big],
\end{split}
        \end{equation}
        where $C\in (0,\infty)$ is a constant and \begin{align*}
        &P\Big[\mathcal{G}_{\eta,\eta}(z_1,z_2,z_3,z_4)\big | \mathcal{F}_{\eta,\epsilon}(z_1),\, \mathcal{F}_{\eta,\epsilon}(z_2),\, z_3\xleftrightarrow{B}\partial B_{\epsilon}(z_3),\, z_4\xleftrightarrow{B} \partial B_{\epsilon}(z_4)\Big]\\
        &\quad:=\lim_{a\to 0} \mathbb{P}^a\Big[\mathcal{G}^a_{\eta,\eta}(z^a_1,z^a_2,z^a_3,z^a_4)\big | \mathcal{F}_{\eta,\epsilon}(z_1^a),\, \mathcal{F}_{\eta,\epsilon}(z_2^a),\, z_3^a\xleftrightarrow{B}\partial B_{\epsilon}(z_3^a),\, z_4^a\xleftrightarrow{B}\partial B_{\epsilon}(z_4^a)\Big] \in (0,\infty).
    \end{align*}
%         and where the existence of the last limit follows from the same arguments as in the proof of~\cite[Theorem~1.5]{Cam23}.
\end{lemma}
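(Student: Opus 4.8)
The plan is to reduce the alternating combination $\sum_{i=1}^2 \hat T_{m,j}^{(i,a)}-\sum_{i=3}^4 \hat T_{m,j}^{(i,a)}$ to a product of three asymptotically independent pieces: a local four-arm contribution at each of $z_1$ and $z_2$, and one macroscopic contribution carrying the long-range black connection between $z_3^a$ and $z_4^a$. First I would note that, for fixed $x,y\in(0,1)$ and $a$ small, the indices $m\in\{\lfloor Mx\rfloor,\lceil Mx\rceil\}$ and $j\in\{\lfloor My\rfloor,\lceil My\rceil\}$ satisfy $m,j<M$, so that $B_m^{(1,a)}\subset B_\epsilon(z_1)$ and $B_j^{(2,a)}\subset B_\epsilon(z_2)$ are disjoint. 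Abbreviating $A:=\{z_1^{(a,-)}\xleftrightarrow[B_{m-1}^{(1,a)}]{B;B_m^{(1,a)}}z_1^{(a,+)}\}$, $B:=\{z_2^{(a,-)}\xleftrightarrow[B_{j-1}^{(2,a)}]{B;B_j^{(2,a)}}z_2^{(a,+)}\}$ and $C:=E_{m,j}^{(4,a)}$, the events $A$ and $B$ depend on disjoint sets of vertices and are therefore independent. Factoring $\mathbb{P}^a[A,B,C]=\mathbb{P}^a[A]\mathbb{P}^a[B]\mathbb{P}^a[C\mid A,B]$ and likewise for the mixed terms, a direct computation gives
\[
\sum_{i=1}^2 \hat T_{m,j}^{(i,a)}-\sum_{i=3}^4 \hat T_{m,j}^{(i,a)}=\mathbb{P}^a[A]\,\mathbb{P}^a[B]\,\big(\mathbb{P}^a[C\mid A,B]-\mathbb{P}^a[C\mid A]-\mathbb{P}^a[C\mid B]+\mathbb{P}^a[C]\big),
\]
so that, writing $D_{m,j}^a$ for the double difference in parentheses, everything reduces to the asymptotics of $\mathbb{P}^a[A]\mathbb{P}^a[B]\,D_{m,j}^a$, where $\mathbb{P}^a[A]\mathbb{P}^a[B]\asymp a^{5/2}$.

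The core of the argument is the claim $D_{m,j}^a/\mathbb{P}^a[\mathcal{G}^a_{2^{m+1}a,2^{j+1}a}(z_1^a,z_2^a,z_3^a,z_4^a)]\to(V_3-\tfrac12)^2$, with $\mathcal{G}^a$ the macroscopic event of~\eqref{eqn::def_event_G}. The point is that $C$ requires the black $z_3^a$--$z_4^a$ connection to pass through both $B_m^{(1,a)}$ and $B_j^{(2,a)}$, and conditioning on $A$ (resp.\ $B$) influences $C$ only through the way this passing connection is completed, inside $B_m^{(1,a)}$ (resp.\ $B_j^{(2,a)}$), through the two black arms of the forced four-arm event. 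Using the strict inequality between the five-arm and four-arm exponents~\cite{SmirnovWernerCriticalExponents} to restrict to the generic event $\mathring{\mathcal{F}}$ with exactly four crossing arms, up to $o(1)$ errors as in~\eqref{eqn::five_arm_four_arm}, I would condition on the macroscopic skeleton $\mathcal{G}^a$ and factor $D_{m,j}^a$ as $\mathbb{P}^a[\mathcal{G}^a]$ times two local conditional differences, one per disk. Because $A$ and $B$ live in disjoint regions the two local differences decouple, and by rescaling the lattice by $2^{-m}$ (resp.\ $2^{-j}$), so that $m,j\to\infty$ turns each into a fixed-scale annulus problem, the coupling argument of~\cite[Lemma~2.3]{CF24} shows that each converges to a constant that does not depend on $x,y$. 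By~\cite[Lemmas~3.5 and~3.6]{CF24} this constant equals $V_3-\tfrac12$, the same ``completion'' constant that governs a single energy insertion, and it is positive since $V_3>\tfrac12$; the two insertions therefore yield the factor $(V_3-\tfrac12)^2$ and the identity $U=(V_3-\tfrac12)^2\hat U$.

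It then remains to identify $\hat U$ and to establish positivity, which I would do exactly as in the proof of Lemma~\ref{lem::three_point_spin_energy_aux}. I would peel off the two four-arm probabilities $\mathbb{P}^a[A],\mathbb{P}^a[B]$ using~\eqref{eqn::four_arm_proba} and~\cite[Corollary~2.4]{CF24}, which control the four-arm probability across scales and produce the powers of $a$ and $\epsilon$; then, using the conformal invariance of the full-plane nested $\mathrm{CLE}_6$~\cite{CamiaNewmanPercolationFull,GwynneMillerQianCLE} together with the coupling of conditional measures, I would rewrite the normalized limit of $\mathbb{P}^a[A]\mathbb{P}^a[B]\,\mathbb{P}^a[\mathcal{G}^a_{2^{m+1}a,2^{j+1}a}]$ in the form $C\,\epsilon^{-5/2-5/24}\lim_{\eta\to0}P[\mathcal{G}_{\eta,\eta}(z_1,z_2,z_3,z_4)\mid\mathcal{F}_{\eta,\epsilon}(z_1),\mathcal{F}_{\eta,\epsilon}(z_2),z_3\xleftrightarrow{B}\partial B_\epsilon(z_3),z_4\xleftrightarrow{B}\partial B_\epsilon(z_4)]$ asserted in~\eqref{eqn::def_hat_U}. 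Positivity of $\hat U$, and hence of $U$, follows from a standard RSW argument (as in the proofs of Lemmas~2.1 and~2.2 of~\cite{CamiaNewman2009ising}) showing that this conditional probability of $\mathcal{G}_{\eta,\eta}$ stays bounded away from $0$ uniformly as $\eta\to0$, combined once more with $V_3>\tfrac12$.

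The main obstacle is the factorization in the second paragraph: one must show that, after conditioning on $\mathcal{G}^a$, the two local conditional differences genuinely decouple and, crucially, that each converges to a limit \emph{independent of the scale indices} $m\sim Mx$ and $j\sim My$ (and of $x,y$). This scale-uniformity is exactly what later permits the outer double sum $\sum_{m,j}$ over $O(|\log a|^2)$ terms to be evaluated, and it is the most delicate step; it is obtained by running the coupling and RSW estimates of~\cite[Lemmas~3.5 and~3.6]{CF24} simultaneously in the three well-separated annular regions surrounding $z_1$, $z_2$, and the pair $\{z_3,z_4\}$.
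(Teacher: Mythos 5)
Your proposal is correct and follows essentially the same route as the paper's proof: the paper evaluates the four normalized limits $\lim_{a\to 0}a^{-\frac{5}{2}}\pi_a^{-2}\hat{T}^{(i,a)}_{m,j}$ separately as $V_3^2\hat U$, $\tfrac14\hat U$, $\tfrac12 V_3\hat U$, $\tfrac12 V_3\hat U$ and combines them into $\big(V_3-\tfrac12\big)^2\hat U$, which is exactly your factorization $\mathbb{P}^a[A]\,\mathbb{P}^a[B]\,\big(\mathbb{P}^a[C\mid A,B]-\mathbb{P}^a[C\mid A]-\mathbb{P}^a[C\mid B]+\mathbb{P}^a[C]\big)$ expanded term by term. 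All the key ingredients coincide: independence of the two local four-arm events, restriction to exactly-four-arm configurations via the five-arm exponent as in~\eqref{eqn::five_arm_four_arm}, the coupling arguments of~\cite[Lemma~2.3]{CF24} giving scale-independent local limits ($V_3$ conditioned, $\tfrac12$ unconditioned, via~\cite[Lemmas~3.5 and~3.6]{CF24}), identification of $\hat U$ through the four-arm asymptotics and $\mathrm{CLE}_6$ conformal invariance, and RSW estimates for positivity.
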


%{\color{blue} Let $x,y\in (0,1)$ and $m\in \{\lfloor Mx\rfloor,\, \lceil Mx \rceil\}$, $j\in \{\lfloor My\rfloor,\, \lceil My\rceil\}$. 
%According to Lemma~\ref{lem::four_point_energy_spin_aux} below, we have
%\begin{align} \label{eqn::three_aux3}
%\lim_{a\to 0}a^{-\frac{5}{2}}\pi_a^{-2}\times\Big(\sum_{i=1}^2 \hat{T}_{m,j}^{(i,a)}-\sum_{i=3}^4 \hat{T}_{m,j}^{(i,a)}\Big)= U(z_1,z_2,z_3,z_4)>0,
%\end{align}
%for some positive-valued function $U(z_1,z_2,z_3,z_4)$.

We postpone the proof of Lemma~\ref{lem::four_point_energy_spin_aux} and use Steps 1-4 to complete the proof of~\eqref{eqn::mixed_correlation_spin_energy}.
%Thanks to~\eqref{eqn::four_arm_proba} and the quasi-multiplicativity of four-arm probabilities (see e.g.,~\cite[Proposition~12]{NolinNearCriticalPercolation}), we have
It follows from a standard application of RSW estimates (see, e.g., the proofs of Lemmas 2.1 and 2.2 of~\cite{CamiaNewman2009ising}) that
\begin{align} \label{eqn::three_aux4}
	\Big|a^{-\frac{5}{2}}\pi_a^{-2}\times \Big(\sum_{i=1}^2 \hat{T}_{m,j}^{(i,a)}-\sum_{i=3}^4 \hat{T}_{m,j}^{(i,a)} \Big)\Big|\lesssim 1,\quad \text{for all } 1\leq m,j\leq M+1.
\end{align}
Recall that $M=M(a,\epsilon)$ in~\eqref{eqn::three_aux2} is the largest integer such that $2^{(M+1)} (2a)\leq \epsilon$. Hence, for any sufficiently small $\epsilon>0$,
combining~\eqref{eqn::three_aux1}--\eqref{eqn::three_aux4} and using the dominated convergence theorem, we obtain
\begin{align} \label{eqn::three_aux5}
\langle \phi(z_1)\phi(z_2)\psi(z_3)\psi(z_4)\rangle:=	\lim_{a\to 0} a^{-\frac{5}{2}} |\log a|^{-2}\pi_a^{-2}\times \langle \mathcal{E}_{z_1^a}\mathcal{E}_{z_2^a}S_{z_3^a}S_{z_4^a}\rangle^a= \frac{1}{(\log 2)^2} U(z_1,z_2,z_3,z_4)\in (0,\infty),
\end{align}
which gives~\eqref{eqn::mixed_correlation_spin_energy} with $\ell =2$ and $k=1$. \end{proof}

%{\color{red} \tt Federico: $U$ is actually independent of $\epsilon$. Maybe we should point it out and drop $\epsilon$ from he notation.}

%Then one can repeat the above argument in the continuum to show that
%	\begin{align}\label{eqn::three_aux6}
%		\lim_{\delta\to 0} \delta^{-\frac{25}{12}}|\log\delta|^{-2}\times \langle \phi^{\delta}(z_1)\phi^{\delta}(z_2)\psi(z_3)\psi(z_4)\rangle=C U(z_1,z_2,z_3,z_4)
%	\end{align}

%\end{proof}

%{\color{blue}\begin{lemma} \label{lem::four_point_energy_spin_aux}
%%in the proof of~\eqref{eqn::mixed_correlation_spin_energy} and~\eqref{eqn::cov_mixed_correlation_spin_energy} in Theorem~\ref{thm::energy_spin} with $\ell=2$ and $k=1$ presented above.
%Let $z_1,\ldots,z_4\in \mathbb{C}$ be distinct. Let $x,y\in (0,1)$ and $m\in \{\lfloor Mx\rfloor,\, \lceil Mx \rceil\}$, $j\in \{\lfloor My\rfloor,\, \lceil My\rceil\}$. Suppose that $T_{m,j}^{(i,a)}$  are the terms defined in ~\eqref{eqn::three_aux2}. Then we have~\eqref{eqn::three_aux3} for some positive-valued function $U(z_1,z_2,z_3,z_4)$. 
%%{\color{red} For any $z_1,z_2,z_3,z_4 \in \mathbb{C}$ and all $\epsilon>0$ sufficiently small, there exists a function $U(z_1,z_2,z_3,z_4,\epsilon)>0$ such that, for any $x,y\in (0,1)$ and $m\in \{\lfloor My\rfloor,\; \lceil My\rceil\}$, $j\in \{\lfloor Mx\rfloor,\; \lceil Mx\rceil\}$, we have~\eqref{eqn::three_aux3}, where $T_{m,j}^{(i,a)}$ are the terms defined in~\eqref{eqn::three_aux2}.}
%\end{lemma}

\begin{proof}[Proof of Lemma~\ref{lem::four_point_energy_spin_aux}]
    We adopt the same notation as in the proof of~\eqref{eqn::mixed_correlation_spin_energy} in Theorem~\ref{thm::energy_spin} with $\ell=2$ and $k=1$.	 Fix $x,y\in (0,1)$ and $m\in \{\lfloor My\rfloor,\; \lceil My\rceil\}$, $j\in \{\lfloor Mx\rfloor,\; \lceil Mx\rceil\}$. We claim that 
        \begin{align}
            \lim_{a\to 0} a^{-\frac{5}{2}}\pi_a^{-2}\times \hat{T}_{m,j}^{(1,a)}=& \; V_3^2 \times\hat{U}(z_1,z_2,z_3,z_4), \label{eqn::limit_Tmj_1}\\
            \lim_{a\to 0} a^{-\frac{5}{2}}\pi_a^{-2}\times \hat{T}_{m,j}^{(2,a)}=& \; \frac{1}{4}\times\hat{U}(z_1,z_2,z_3,z_4),\label{eqn::limit_Tmj_2}\\
            \lim_{a\to 0} a^{-\frac{5}{2}}\pi_a^{-2}\times \hat{T}_{m,j}^{(3,a)}=& \; \frac{1}{2}V_3\times\hat{U}(z_1,z_2,z_3,z_4),\label{eqn::limit_Tmj_3}\\
             \lim_{a\to 0} a^{-\frac{5}{2}}\pi_a^{-2}\times \hat{T}_{m,j}^{(4,a)}=& \; \frac{1}{2}V_3\times\hat{U}(z_1,z_2,z_3,z_4),\label{eqn::limit_Tmj_4}
        \end{align}
        where $V_3>\frac{1}{2}$ is the constant in~\cite[Lemmas~3.5 and~3.6]{CF24}, the function $\hat{U}(z_1,z_2,z_3,z_4)$ is defined in~\eqref{eqn::def_hat_U}, the factor $\frac{1}{2}$ in~\eqref{eqn::limit_Tmj_3} and~\eqref{eqn::limit_Tmj_4} is the constant $V_2$ in~\cite[Lemmas~3.5 and~3.6]{CF24}, and the factor $\frac{1}{4}$ in~\eqref{eqn::limit_Tmj_2} comes from the square of $V_2=\frac{1}{2}$.
   
        Assuming this claim, we have
        \begin{align*}
            \lim_{a\to 0} a^{-\frac{5}{2}}\pi_a^{-2}\times \Big(\sum_{i=1}^2\hat{T}_{m,j}^{(i,a)}-\sum_{i=3}^4 \hat{T}_{m,j}^{(i,a)}\Big)= \Big(V_3-\frac{1}{2}\Big)^2 \times \hat{U}(z_1,z_2,z_3,z_4),
        \end{align*}
        which gives~\eqref{eqn::three_aux3} with $U(z_1,z_2,z_3,z_4):=\big(V_3-\frac{1}{2}\big)^2\hat{U}(z_1,z_2,z_3,z_4)$.

        We now prove~\eqref{eqn::limit_Tmj_1}.  Note that the event $\mathcal{G}^a_{2^{m+1}a,2^{j+1}a}(z_1^a,z_2^a,z_3^a,z_4^a)$ defined in~\eqref{eqn::def_event_G} implies the event $\mathcal{F}_{2^{m+1}a,\epsilon}(z_1^a)\cap \mathcal{F}_{2^{j+1}a,\epsilon}(z_2^a)$. We define
       \begin{align*}
        \tilde{\mathcal{G}}^a_{2^{m+1}a,2^{j+1}a}(z_1^a,z_2^a,z_3^a,z_4^a):=\mathcal{G}^a_{2^{m+1}a,2^{j+1}a}(z_1^a,z_2^a,z_3^a,z_4^a)\cap \mathring{\mathcal{F}}_{2^{m+1}a,\epsilon}(z_1^a)\cap \mathring{\mathcal{F}}_{2^{j+1}a,\epsilon}(z_2^a).
       \end{align*}
        Note also that the three events 
        \begin{align*}
            \Big\{z_1^{(a,-)}\xleftrightarrow[B_{m-1}^{(1,a)}]{B;B_{m}^{(1,a)}}z_1^{(a,+)}\Big\}, \quad  \Big\{z_2^{(a,-)}\xleftrightarrow[B_{j-1}^{(2,a)}]{B;B_{j}^{(2,a)}}z_2^{(a,+)}\Big\}, \quad \text{and} \quad \mathcal{G}^a_{2^{m+1}a,2^{j+1}a}(z_1^a,z_2^a,z_3^a,z_4^a)
        \end{align*}
        are independent. Combining this with~\eqref{eqn::five_arm_four_arm}, we have 
   \begin{equation} \label{eqn::hatT1}
       \begin{split}
                    & \hat{T}_{m,j}^{(1,a)}=\underbrace{\mathbb{P}^a\Big[z_1^{(a,-)}\xleftrightarrow[B_{m-1}^{(1,a)}]{B;B_{m}^{(1,a)}}z_1^{(a,+)}\Big]\mathbb{P}^a\Big[z_2^{(a,-)}\xleftrightarrow[B_{j-1}^{(2,a)}]{B;B_{j}^{(2,a)}}z_2^{(a,+)}\Big]\mathbb{P}^a\Big[\mathcal{G}^a_{2^{m+1}a,2^{j+1}a}(z_1^a,z_2^a,z_3^a,z_4^a)\Big]}_{U_1^a}\\
            &\qquad\times \Big(\underbrace{\mathbb{P}^a\Big[E_{m,j}^{(4,a)} \big| z_1^{(a,-)}\xleftrightarrow[B_{m-1}^{(1,a)}]{B;B_{m}^{(1,a)}}z_1^{(a,+)},\, z_2^{(a,-)}\xleftrightarrow[B_{j-1}^{(2,a)}]{B;B_{j}^{(2,a)}}z_2^{(a,+)},\, \tilde{\mathcal{G}}^a_{2^{m+1}a,2^{j+1}a}(z_1^a,z_2^a,z_3^a,z_4^a)\Big]}_{U_2^a}+o(1)\Big).
       \end{split}
   \end{equation}

        First, for the term $U_1^a$, we write
        \begin{align*}
            U_1^a =&\;\mathbb{P}^a\Big[\mathcal{F}_{2a,2^ma}(z_1^a)\Big]\mathbb{P}^a\Big[\mathcal{F}_{2^{m+1}a,\epsilon}(z_1^a)\Big] \mathbb{P}^a\Big[\mathcal{F}_{2a,2^ja}(z_2^a)\Big]\mathbb{P}^a\Big[\mathcal{F}_{2^{j+1}a,\epsilon}(z_2^a)\Big]\\
           &\times\mathbb{P}^a\Big[z_3^a\xleftrightarrow{B} \partial B_{\epsilon}(z_3^a)\Big]\mathbb{P}^a\Big[z_4^a\xleftrightarrow{B} \partial B_{\epsilon}(z_4^a)\Big]\\
           &\times \mathbb{P}^a\Big[z_1^{(a,-)}\xleftrightarrow[B_{m-1}^{(1,a)}]{B;B_{m}^{(1,a)}}z_1^{(a,+)}\big| \mathcal{F}_{2a,2^ma}(z_1^a)\Big] \mathbb{P}^a\Big[z_2^{(a,-)}\xleftrightarrow[B_{j-1}^{(2,a)}]{B;B_{j}^{(2,a)}}z_2^{(a,+)}\big| \mathcal{F}_{2a,2^ja}(z_2^a)\Big]\\
           &\times \mathbb{P}^a\Big[\mathcal{G}_{2^{m+1}a,2^{j+1}a}^a(z_1^a,z_2^a,z_3^a,z_4^a)\big| \mathcal{F}_{2^{m+1}a,\epsilon}(z_1^a),\, \mathcal{F}_{2^{j+1}a,\epsilon}(z_2^a),\,z_3^a\xleftrightarrow{B} \partial B_{\epsilon}(z_3^a),\, z_4^a\xleftrightarrow{B} \partial B_{\epsilon}(z_4^a)\Big].
        \end{align*}
        According to~\eqref{eqn::four_arm_quasi}, we have 
        \begin{align*}
            \lim_{a\to 0}a^{-\frac{5}{4}} \times \mathbb{P}^a\Big[\mathcal{F}_{2a,2^ma}(z_1^a)\Big]\mathbb{P}^a\Big[\mathcal{F}_{2^{m+1}a,\epsilon}(z_1^a)\Big] = \lim_{a\to 0}a^{-\frac{5}{4}}\times \mathbb{P}^a\Big[\mathcal{F}_{2a,2^ja}(z_2^a)\Big]\mathbb{P}^a\Big[\mathcal{F}_{2^{j+1}a,\epsilon}(z_2^a)\Big]=C_1^*C_2^*\epsilon^{-\frac{5}{4}},
        \end{align*}
        while~\eqref{eqn::lim=C^*_3} implies
        \begin{align*}
            \lim_{a\to 0}\mathbb{P}^a\Big[z_1^{(a,-)}\xleftrightarrow[B_{m-1}^{(1,a)}]{B;B_{m}^{(1,a)}}z_1^{(a,+)}\big| \mathcal{F}_{2a,2^ma}(z_1^a)\Big] =\lim_{a\to 0}\mathbb{P}^a\Big[z_2^{(a,-)}\xleftrightarrow[B_{j-1}^{(2,a)}]{B;B_{j}^{(2,a)}}z_2^{(a,+)}\big| \mathcal{F}_{2a,2^ja}(z_2^a)\Big]=\tilde{C}_3^*,
        \end{align*}
        for some constant $\tilde{C}_3^*\in(0,\infty)$.
        It follows from~\cite{GarbanPeteSchrammPivotalClusterInterfacePercolation} (see the first limit in the third displayed equation on page 999) that
        \begin{align*}
            \lim_{a\to 0} \pi_a^{-1}\times \mathbb{P}^a\Big[z_3^a\xleftrightarrow{B}\partial B_{\epsilon}(z_3^a)\Big]=\lim_{a\to 0} \pi_a^{-1}\times \mathbb{P}^a\Big[z_4^a\xleftrightarrow{B}\partial B_{\epsilon}(z_4^a)\Big]=\epsilon^{-\frac{5}{48}}.
        \end{align*}
        We furthermore claim that
        \begin{equation} \label{eqn::claim_G_coupling}
\begin{split}
    &\lim_{a\to 0}\mathbb{P}^a\Big[\mathcal{G}_{2^{m+1}a,2^{j+1}a}^a(z_1^a,z_2^a,z_3^a,z_4^a)\big| \mathcal{F}_{2^{m+1}a,\epsilon}(z_1^a),\, \mathcal{F}_{2^{j+1}a,\epsilon}(z_2^a),\,z_3^a\xleftrightarrow{B} \partial B_{\epsilon}(z_3^a),\, z_4^a\xleftrightarrow{B} \partial B_{\epsilon}(z_4^a)\Big]\\
&\quad=\lim_{\eta\to 0} P\Big[\mathcal{G}_{\eta,\eta}(z_1,z_2,z_3,z_4)\big | \mathcal{F}_{\eta,\epsilon}(z_1),\, \mathcal{F}_{\eta,\epsilon}(z_2),\, z_3\xleftrightarrow{B}\partial B_{\epsilon}(z_3),\, z_4\xleftrightarrow{B} \partial B_{\epsilon}(z_4)\Big]\in (0,\infty).
\end{split}
        \end{equation}
    
    Assuming~\eqref{eqn::claim_G_coupling}, we have
 {\small      \begin{align} \label{eqn::limit_Tmj_1_aux1}
            \lim_{a\to 0} a^{-\frac{5}{2}}\pi_{a}^{-2} U_1^a
            =(C_1^*C_2^*\tilde{C}_3^*)^2\epsilon^{-\frac{5}{2}-\frac{5}{24}}\lim_{\eta\to 0} P\Big[\mathcal{G}_{\eta,\eta}(z_1,z_2,z_3,z_4)\big | \mathcal{F}_{\eta,\epsilon}(z_1),\, \mathcal{F}_{\eta,\epsilon}(z_2),\, z_3\xleftrightarrow{B}\partial B_{\epsilon}(z_3),\, z_4\xleftrightarrow{B} \partial B_{\epsilon}(z_4)\Big].
        \end{align}}

We now prove the claim~\eqref{eqn::claim_G_coupling}. The existence of the first limit follows from the same arguments as in the proof of~\cite[Proof of Theorem~1.5]{Cam23}. To prove the existence of the second limit and the equality, let $0<\eta\ll 1$. As in the proof of Lemma~\ref{lem::four_arm_coupling_argument}, one can find a coupling between the two measures 
\begin{align*}
\mathbb{P}^a\Big[\cdot\big| \mathcal{F}_{2^{m+1}a,\epsilon}(z_1^a),\, \mathcal{F}_{2^{j+1}a,\epsilon}(z_2^a),\,z_3^a\xleftrightarrow{B} \partial B_{\epsilon}(z_3^a),\, z_4^a\xleftrightarrow{B} \partial B_{\epsilon}(z_4^a)\Big]
\end{align*}
and
\begin{align*}
    \mathbb{P}^a\Big[\cdot\big| \mathcal{F}_{\eta,\epsilon}(z_1^a),\, \mathcal{F}_{\eta,\epsilon}(z_2^a),\,z_3^a\xleftrightarrow{B} \partial B_{\epsilon}(z_3^a),\, z_4^a\xleftrightarrow{B} \partial B_{\epsilon}(z_4^a)\Big]
\end{align*}
showing that
\begin{align*}
   & \Big| \mathbb{P}^a\Big[\mathcal{G}_{2^{m+1}a,2^{j+1}a}^a(z_1^a,z_2^a,z_3^a,z_4^a)\big| \mathcal{F}_{2^{m+1}a,\epsilon}(z_1^a),\, \mathcal{F}_{2^{j+1}a,\epsilon}(z_2^a),\,z_3^a\xleftrightarrow{B} \partial B_{\epsilon}(z_3^a),\, z_4^a\xleftrightarrow{B} \partial B_{\epsilon}(z_4^a)\Big]\\
    &\quad-\mathbb{P}^a\Big[\mathcal{G}^a_{\eta,\eta}(z^a_1,z^a_2,z^a_3,z_4^a)\big | \mathcal{F}_{\eta,\epsilon}(z_1^a),\, \mathcal{F}_{\eta,\epsilon}(z_2^a),\, z_3^a\xleftrightarrow{B}\partial B_{\epsilon}(z_3^a),\, z_4^a\xleftrightarrow{B}\partial B_{\epsilon}(z_4^a)\Big]\Big|\leq \Big(\min\Big\{\frac{1}{\eta}, 2^{M-m},2^{M-j}\Big\}\Big)^{-c_*}
\end{align*}
for some constant $c_*\in (0,\infty)$. Then, as in the proof of~\eqref{eqn::one_arm_cap_four_arm}, letting $a\to 0$ first and then $\eta\to 0$, we get~\eqref{eqn::claim_G_coupling}, as desired.

Second, for the term $U_2^a$, note that
{\small\begin{equation} \label{eqn::hatT2}
\begin{split}
    U_2^a&=\mathbb{P}^a\Big[\mathcal{C}_{2^{m+1}a,\epsilon}^a(z_1^a),\, \mathcal{C}_{2^{j+1}a,\epsilon}^a(z_2^a) \big| z_1^{(a,-)}\xleftrightarrow[B_{m-1}^{(1,a)}]{B;B_{m}^{(1,a)}}z_1^{(a,+)},\, z_2^{(a,-)}\xleftrightarrow[B_{j-1}^{(2,a)}]{B;B_{j}^{(2,a)}}z_2^{(a,+)},\, \tilde{\mathcal{G}}^a_{2^{m+1}a,2^{j+1}a}(z_1^a,z_2^a,z_3^a,z_4^a)\Big]\\
&=\underbrace{\mathbb{P}^a\Big[\mathcal{C}_{2^{m+1}a,\epsilon}^a(z_1^a) \big| z_1^{(a,-)}\xleftrightarrow[B_{m-1}^{(1,a)}]{B;B_{m}^{(1,a)}}z_1^{(a,+)},\, \tilde{\mathcal{G}}^a_{2^{m+1}a,2^{j+1}a}(z_1^a,z_2^a,z_3^a,z_4^a)\Big]}_{U_{2,1}^a}\\
&\enspace\times \underbrace{\mathbb{P}^a\Big[ \mathcal{C}_{2^{j+1}a,\epsilon}^a(z_2^a)\big|\mathcal{C}_{2^{m+1}a,\epsilon}^a(z_1^a),\, z_1^{(a,-)}\xleftrightarrow[B_{m-1}^{(1,a)}]{B;B_{m}^{(1,a)}}z_1^{(a,+)},\, z_2^{(a,-)}\xleftrightarrow[B_{j-1}^{(2,a)}]{B;B_{j}^{(2,a)}}z_2^{(a,+)},\, \tilde{\mathcal{G}}^a_{2^{m+1}a,2^{j+1}a}(z_1^a,z_2^a,z_3^a,z_4^a)\Big]}_{U_{2,2}^a},
\end{split}
\end{equation}}where we have used the independence of the events $\Big\{z_2^{(a,-)}\xleftrightarrow[B_{j-1}^{(2,a)}]{B;B_{j}^{(2,a)}}z_2^{(a,+)}\Big\}$ and
\[\mathcal{C}_{2^{m+1}a,\epsilon}^a(z_1^a)\cap\Big\{z_1^{(a,-)}\xleftrightarrow[B_{m-1}^{(1,a)}]{B;B_{m}^{(1,a)}}z_1^{(a,+)}\Big\}\cap\tilde{\mathcal{G}}^a_{2^{m+1}a,2^{j+1}a}(z_1^a,z_2^a,z_3^a,z_4^a) \]
to get the last equality. 
For the term $U_{2,1}^a$, when $a$ is small enough, as in the proof of Lemma~\ref{lem::four_arm_coupling_argument}, one can find a coupling between the two measures
\begin{align*}
    \mathbb{P}^a\Big[\cdot\big| z_1^{(a,-)}\xleftrightarrow[B_{m-1}^{(1,a)}]{B;B_{m}^{(1,a)}}z_1^{(a,+)},\, \tilde{\mathcal{G}}^a_{2^{m+1}a,2^{j+1}a}(z_1^a,z_2^a,z_3^a,z_4^a)\Big]
\end{align*}
 and
 \begin{align*}
      \mathbb{P}^a\Big[\cdot\big| z_1^{(a,-)}\xleftrightarrow[B_{m-1}^{(1,a)}]{B;B_{m}^{(1,a)}}z_1^{(a,+)},\, \tilde{\mathcal{F}}^a_{2^{m+1}a,\epsilon}(z_1^a,z_3^a,z_4^a)\Big]
 \end{align*}
 showing that
 \begin{align} \label{eqn::four_point_energy_spin_aux0}
     \Big|U_{2,1}^a- \mathbb{P}^a\Big[\mathcal{C}_{2^{m+1}a,\epsilon}(z_1^a)\big| z_1^{(a,-)}\xleftrightarrow[B_{m-1}^{(1,a)}]{B;B_{m}^{(1,a)}}z_1^{(a,+)},\, \tilde{\mathcal{F}}^a_{2^{m+1}a,\epsilon}(z_1^a,z_3^a,z_4^a)\Big]\Big|\lesssim 2^{-c_*(M-m)}
 \end{align}
 for some constant $c_*\in (0,\infty)$. On the one hand, it follows from~\eqref{eqn::change_of_limits_aux6} and the translation invariance of percolation that
 \begin{align} \label{eqn::four_point_energy_spin_aux1}
     \lim_{a\to 0}\mathbb{P}^a\Big[\mathcal{C}_{2^{m+1}a,\epsilon}(z_1^a)\big| z_1^{(a,-)}\xleftrightarrow[B_{m-1}^{(1,a)}]{B;B_{m}^{(1,a)}}z_1^{(a,+)},\, \tilde{\mathcal{F}}^a_{2^{m+1}a,\epsilon}(z_1^a,z_3^a,z_4^a)\Big]=\lim_{\eta\to 0}P\Big[\mathcal{C}_{2,1/\eta}\big| -\eta\xleftrightarrow[B_1(0)]{B;B_2(0)}\eta,\; \mathring{\mathcal{F}}_{2,1/\eta}(0) \Big].
 \end{align}
 On the other hand, in the continuum, let $z,z'\notin \{z_3,z_4\}$ be distinct points such that $B_{2\epsilon}(z')\cap \{z_3,z_4\}=\emptyset$ and $B_{2\epsilon}(z')\cap \{z_3,z_4\}=\emptyset$, 
 and define $\tilde{B}_{n}:=B_{2^n|z-z'|}(\tfrac{z+z'}{2})$ for $1\leq n\leq \tilde{M}$, where $\tilde{M}=\tilde{M}(z,z')$ is the largest integer such that $2^{\tilde{M}+1}|z-z'|\leq \epsilon$. Let $z^a,(z')^a$ be vertices in $a\mathcal{T}$ such that $z^a\to z$ and $(z')^a\to z'$ as $a\to 0$. For $2\leq n\leq \tilde{M} $, define
 \begin{align*}
     &P\Big[z_3\xleftrightarrow{B}z_4 \big| z \xleftrightarrow[\hat{B}_{n-1}]{B;\tilde{B}_n} z',\, \tilde{\mathcal{F}}_{2^n|z-z'|,\epsilon}\big(\tfrac{z+z'}{2},z_3,z_4\big)\Big]\\
     &\qquad:=\lim_{a\to 0} \pi_a^{-2} \times \mathbb{P}^a\Big[z_3^a\xleftrightarrow{B}z_4^a \big| z ^a\xleftrightarrow[\tilde{B}_{n-1}]{B;\tilde{B}_n} (z')^a,\, \tilde{\mathcal{F}}^a_{2^n|z^a-(z')^a|,\epsilon}\Big(\tfrac{z^a+(z')^a}{2},z_3^a,z_4^a\Big)\Big],
 \end{align*}
 where the existence of the limit can be shown using the strategy in~\cite[Proof of Theorem~1.5]{Cam23}. Fix $n\in \{\lfloor \tilde{M}x \rfloor, \, \lceil \tilde{M}x\rceil\}$. As in the proof of~\eqref{eqn::change_of_limits_aux6}, one can show that
 \begin{align}\label{eqn::four_point_energy_spin_aux2}
     \lim_{z'\to z} P\Big[z_3\xleftrightarrow{B}z_4 \big| z \xleftrightarrow[\tilde{B}_{n-1}]{B;\tilde{B}_n} z',\, \tilde{\mathcal{F}}_{2^n|z-z'|,\epsilon}\big(\tfrac{z+z'}{2},z_3,z_4\big)\Big] =\lim_{\eta\to 0}P\Big[\mathcal{C}_{2,1/\eta}\big| -\eta\xleftrightarrow[B_1(0)]{B;B_2(0)}\eta,\; \mathring{\mathcal{F}}_{2,1/\eta}(0) \Big].
 \end{align}Besides, it follows from~\cite[Lemmas~3.5 and~3.6]{CF24} that
 \begin{align} \label{eqn::four_point_energy_spin_aux3}
     \lim_{z'\to z} P\Big[z_3\xleftrightarrow{B}z_4 \big| z \xleftrightarrow[\tilde{B}_{n-1}]{B;\tilde{B}_n} z',\, \tilde{\mathcal{F}}_{2^n|z-z'|,\epsilon}\big(\tfrac{z+z'}{2},z_3,z_4\big)\Big] =V_3,
 \end{align}
 where $V_3>\frac{1}{2}$ is the constant in~\cite[Lemmas~3.5 and~3.6]{CF24}.
 Combining~\eqref{eqn::four_point_energy_spin_aux0}--\eqref{eqn::four_point_energy_spin_aux3}, we have 
 \begin{align} \label{eqn::four_point_energy_spin_aux4}
     \lim_{a\to 0} U_{2,1}^a=V_3.
 \end{align}

 For the term $U_{2,2}^a$, when $a$ is small enough, as in the proof of Lemma~\ref{lem::four_arm_coupling_argument}, one can find a coupling between the two measures 
 \begin{align*}
     \mathbb{P}^a\Big[ \cdot\big|\mathcal{C}_{2^{m+1}a,\epsilon}^a(z_1^a),\, z_1^{(a,-)}\xleftrightarrow[B_{m-1}^{(1,a)}]{B;B_{m}^{(1,a)}}z_1^{(a,+)},\, z_2^{(a,-)}\xleftrightarrow[B_{j-1}^{(2,a)}]{B;B_{j}^{(2,a)}}z_2^{(a,+)},\, \tilde{\mathcal{G}}^a_{2^{m+1}a,2^{j+1}a}(z_1^a,z_2^a,z_3^a,z_4^a)\Big]
 \end{align*}
 and
 \begin{align*}
     \mathbb{P}^a\Big[ \cdot\big| z_2^{(a,-)}\xleftrightarrow[B_{j-1}^{(2,a)}]{B;B_{j}^{(2,a)}}z_2^{(a,+)},\,\tilde{\mathcal{F}}^a_{2^{j+1}a,\epsilon}(z_2^a,z_3^a,z_4^a)\Big]
 \end{align*}
 showing that
 \begin{align*}
     \Big|U_{2,2}^a- \mathbb{P}^a\Big[z_3^a\xleftrightarrow{B}z_4^a\big| z_2^{(a,-)}\xleftrightarrow[B_{j-1}^{(2,a)}]{B;B_{j}^{(2,a)}}z_2^{(a,+)},\,\tilde{\mathcal{F}}^a_{2^{j+1}a,\epsilon}(z_2^a,z_3^a,z_4^a)\Big]\Big| \lesssim 2^{-c_*(M-j)},
 \end{align*}
 for some constant $c_*\in (0,\infty)$. As in the proof of~\eqref{eqn::four_point_energy_spin_aux4}, we have
 \begin{align*}
      \lim_{a\to 0}\mathbb{P}^a\Big[ z_3^a\xleftrightarrow{B}z_4^a\big| z_2^{(a,-)}\xleftrightarrow[B_{j-1}^{(2,a)}]{B;B_{j}^{(2,a)}}z_2^{(a,+)},\,\tilde{\mathcal{F}}^a_{2^{j+1}a,\epsilon}(z_2^a,z_3^a,z_4^a)\Big]=V_3,
 \end{align*}
 which implies that 
 \begin{align*}
     \lim_{a\to 0} U_{2,2}^a=V_3.
 \end{align*}
 Therefore, we have
 \begin{align} \label{eqn::limit_Tmj_1_aux2}
     \lim_{a\to o}U_2^a= V_3^2. 
 \end{align}

Combining~\eqref{eqn::limit_Tmj_1_aux1} with~\eqref{eqn::limit_Tmj_1_aux2} gives~\eqref{eqn::limit_Tmj_1}.
The proof of~\eqref{eqn::limit_Tmj_2}--\eqref{eqn::limit_Tmj_4} is essentially the same. Indeed, for $\hat{T}_{m,j}^{(i,a)}$, $i=2,3,4$, one has expansions analogous to~\eqref{eqn::hatT1} and~\eqref{eqn::hatT2}. {The only difference is that the quantities corresponding to $U_{2,1}^a$ and $U_{2,2}^a$ now converge to either $\frac{1}{2}$ or $V_3$, leading to the expressions in~\eqref{eqn::limit_Tmj_2}--\eqref{eqn::limit_Tmj_4}.}
\end{proof}

\begin{proof}[Proof of~\eqref{eqn::cov_mixed_correlation_spin_energy} in Theorem~\ref{thm::energy_spin} with $\ell=2$ and $k=1$. ]

For any $\delta>0$, we define
\begin{align*}
    \langle \phi^{\delta}(z_1)\phi^{\delta}(z_2)\psi(z_3)\psi(z_4)\rangle:=\lim_{a\to 0}\pi_a^{-6}\times\langle \mathcal{E}^\delta_{z_1^a}\mathcal{E}_{z_2^a}^\delta S_{z_3^a}S_{z_4^a}\rangle,
\end{align*}
where $\mathcal{E}_{\cdot}^\delta$ is defined in~\eqref{def::Edelta} and the existence of the limit follows from~\cite[Theorem~1.5]{Cam23}. Let $\bar{M}=\bar{M}(\delta,\epsilon)$ be the largest integer such that $2^{\bar{M}+1}(2\delta)\leq \epsilon$. We define $\bar{B}_{m}^{(i)}:=\{w: |w-z_i|\leq 2^m (2\delta)\}$ for $1\leq m\leq \bar{M}$ and $1\leq i\leq 2$. Recall that $z_1^{(a,\delta,\pm )}$, $z_2^{(a,\delta,\pm)}$, $z_3^{a}$ and $z_4^a$ are vertices in $a\mathcal{T}$ satisfying
\begin{align*}
    z_1^{(a,\delta,\pm)} \to z_1\pm \delta,\quad z_2^{(a,\delta,\pm)} \to z_2 \pm \delta,\quad z_{3}^a\to z_3,\quad \text{and}\quad z_4^a\to z_4,\quad\text{as }a\to 0.
\end{align*}
For $i=1,2$, we define
\begin{align*}
   & P\Big[z_i-\delta\xleftrightarrow{B}z_3\centernot{\xleftrightarrow{B}}z_i+\delta\xleftrightarrow{B}z_4,\; z_{3-i}-\delta\xleftrightarrow{B}z_{3-i}+\delta\Big]\\
   & \qquad:=\lim_{a\to 0} \pi_a^{-6}\times  \mathbb{P}^a\Big[z_i^{(a,\delta,-)}\xleftrightarrow{B}z_3^a\centernot{\xleftrightarrow{B}}z_i^{(a,\delta,+)}\xleftrightarrow{B}z_4^a,\; z_{3-i}^{(a,\delta,-)}\xleftrightarrow{B} z_{3-i}^{(a,\delta,+)}\Big],\\
   & P\Big[z_i-\delta\xleftrightarrow{B}z_4\centernot{\xleftrightarrow{B}}z_i+\delta\xleftrightarrow{B}z_3,\; z_{3-i}-\delta\xleftrightarrow{B}z_{3-i}+\delta\Big]\\
   & \qquad:=\lim_{a\to 0} \pi_a^{-6}\times  \mathbb{P}^a\Big[z_i^{(a,\delta,-)}\xleftrightarrow{B}z_4^a\centernot{\xleftrightarrow{B}}z_i^{(a,\delta,+)}\xleftrightarrow{B}z_3^a,\; z_{3-i}^{(a,\delta,-)}\xleftrightarrow{B} z_{3-i}^{(a,\delta,+)}\Big],\\
   &P\Big[z_1-\delta\xleftrightarrow{B}z_1+\delta,\; z_2-\delta\xleftrightarrow{B}z_2+\delta,\; z_3\xleftrightarrow{B}z_4\Big]\\
   &\qquad:= \lim_{a\to 0}\pi_a^{-6}\times \mathbb{P}^a\Big[z_1^{(a,\delta,-)}\xleftrightarrow{B}z_1^{(a,\delta,+)},\; z_2^{(a,\delta,-)}\xleftrightarrow{B}z_2^{(a,\delta,+)},\; z_3^a\xleftrightarrow{B}z_4^a\Big].
\end{align*}
The existence of these limits can be proved using the strategy in the proof of~\cite[Theorem~1.5]{CF24}.

 In order to analyze the function $\langle \phi^{\delta}(z_1)\phi^{\delta}(z_2)\psi(z_3)\psi(z_4)\rangle$, we repeat the arguments in Step 1 - Step 4 of the proof of~\eqref{eqn::mixed_correlation_spin_energy} with $\ell=2$ and $k=1$, presented above.
Using the definitions of the continuum quantities as limits of lattice quantities, the proof of each step is essentially the same as for the corresponding step in the discrete case, so we only list the conclusions, omitting the details of the derivations.
For a heuristic understanding of the exponents appearing in the equations below, the reader is referred to the discussion in the last paragraph of Section~\ref{subsec::def_result}.

\noindent\textbf{Step $1^*$.} For $i=1,2$, we have 
\begin{align*}
		&\Big| P\Big[z_i-\delta\xleftrightarrow{B}z_3\centernot{\xleftrightarrow{B}}z_i+\delta\xleftrightarrow{B}z_4\Big] P\Big[z_{3-i}-\delta\xleftrightarrow{B}z_{3-i}+\delta\Big]\\
	&\qquad\qquad\qquad\qquad	-P\Big[z_i-\delta\xleftrightarrow{B}z_3\centernot{\xleftrightarrow{B}}z_i+\delta\xleftrightarrow{B}z_4,\; z_{3-i}-\delta\xleftrightarrow{B}z_{3-i}+\delta\Big]\Big|\lesssim \delta^{\frac{5}{2}}|\log \delta|\delta^{-\frac{5}{12}}, \\
			&\Big| P\Big[z_i-\delta\xleftrightarrow{B}z_4\centernot{\xleftrightarrow{B}}z_i+\delta\xleftrightarrow{B}z_3\Big] P\Big[z_{3-i}-\delta\xleftrightarrow{B}z_{3-i}+\delta\Big]\\
	&\qquad\qquad\qquad\qquad	-P\Big[z_i-\delta\xleftrightarrow{B}z_4\centernot{\xleftrightarrow{B}}z_i+\delta\xleftrightarrow{B}z_3,\; z_{3-i}-\delta\xleftrightarrow{B}z_{3-i}+\delta\Big]\Big|\lesssim \delta^{\frac{5}{2}}|\log \delta|\delta^{-\frac{5}{12}}.
\end{align*}

\noindent\textbf{Step $2^*$.} We can write
\begin{align*}
	\langle \phi^{\delta}(z_1)\phi^{\delta}(z_2)\psi(z_3)\psi(z_4)\rangle =&\,\underbrace{P\Big[z_1-\delta\xleftrightarrow{B}z_1+\delta,\; z_2-\delta\xleftrightarrow{B}z_2+\delta,\; z_3\xleftrightarrow{B}z_4\Big]}_{R_1}\\
	&+\underbrace{P\Big[z_1-\delta\xleftrightarrow{B}z_1+\delta\Big] P\Big[z_2-\delta\xleftrightarrow{B}z_2+\delta\Big] P\Big[ z_3\xleftrightarrow{B}z_4\Big]}_{R_2}\\
	&-\underbrace{P\Big[z_2-\delta\xleftrightarrow{B}z_2+\delta\Big]P\Big[ z_1-\delta\xleftrightarrow{B}z_1+\delta,\; z_3\xleftrightarrow{B}z_4\Big]}_{R_3}\\
	&-\underbrace{P\Big[z_1-\delta\xleftrightarrow{B}z_1+\delta\Big]P\Big[ z_2-\delta\xleftrightarrow{B}z_2+\delta,\; z_3\xleftrightarrow{B}z_4\Big]}_{R_4}+R_5+R_6,
\end{align*}
where $R_5$ is a sum of differences studied in Step $1^*$, and 
\begin{align*}
		& R_6=P\left[z_1-\delta\xleftrightarrow{B} z_2-\delta\centernot{\xleftrightarrow{B}}z_1+\delta\xleftrightarrow{B}z_2+\delta,\; z_3\xleftrightarrow{B}z_4\right]\\
		&\qquad+P\left[z_1-\delta\xleftrightarrow{B} z_2+\delta\centernot{\xleftrightarrow{B}}z_1+\delta\xleftrightarrow{B}z_2-\delta,\; z_3\xleftrightarrow{B}z_4\right]\\
          &\qquad+P\left[z_1-\delta\xleftrightarrow{B}z_2-\delta\centernot{\xleftrightarrow{B}}z_1+\delta\xleftrightarrow{B}z_3\centernot{\xleftrightarrow{B}}z_2+\delta\xleftrightarrow{B}z_4\right]\\
        &\qquad+P\left[z_1-\delta\xleftrightarrow{B}z_2-\delta\centernot{\xleftrightarrow{B}}z_1+\delta\xleftrightarrow{B}z_4\centernot{\xleftrightarrow{B}}z_2+\delta\xleftrightarrow{B}z_3\right]\\
        &\qquad+P\left[z_1-\delta\xleftrightarrow{B}z_2+\delta\centernot{\xleftrightarrow{B}}z_1+\delta\xleftrightarrow{B}z_3\centernot{\xleftrightarrow{B}}z_2-\delta\xleftrightarrow{B}z_4\right]\\
        &\qquad+P\left[z_1-\delta\xleftrightarrow{B}z_2+\delta\centernot{\xleftrightarrow{B}}z_1+\delta\xleftrightarrow{B}z_4\centernot{\xleftrightarrow{B}}z_2-\delta\xleftrightarrow{B}z_3\right]\\
        &\qquad+P\left[z_1-\delta\xleftrightarrow{B}z_3\centernot{\xleftrightarrow{B}}z_1+\delta\xleftrightarrow{B}z_2+\delta\centernot{\xleftrightarrow{B}}z_2-\delta\xleftrightarrow{B}z_4\right]\\
        &\qquad+P\left[z_1-\delta\xleftrightarrow{B}z_3\centernot{\xleftrightarrow{B}}z_1+\delta\xleftrightarrow{B}z_2-\delta\centernot{\xleftrightarrow{B}}z_2+\delta\xleftrightarrow{B}z_4\right]\\
        &\qquad+P\left[z_1-\delta\xleftrightarrow{B}z_4\centernot{\xleftrightarrow{B}}z_1+\delta\xleftrightarrow{B}z_2+\delta\centernot{\xleftrightarrow{B}}z_2-\delta\xleftrightarrow{B}z_3\right]\\ &\qquad+P\left[z_1-\delta\xleftrightarrow{B}z_4\centernot{\xleftrightarrow{B}}z_1+\delta\xleftrightarrow{B}z_2-\delta\centernot{\xleftrightarrow{B}}z_2+\delta\xleftrightarrow{B}z_3\right]\lesssim \delta^{\frac{5}{2}-\frac{5}{12}}.
	\end{align*}
It follows from Step $1^*$ that 
\begin{align*}
	|R_5|\lesssim \delta^{\frac{5}{2}}|\log \delta|\delta^{-\frac{5}{12}}. 
\end{align*} 
Therefore, we have
\begin{align} \label{eqn::three_aux1_conti} 
\langle \phi^{\delta}(z_1)\phi^{\delta}(z_2)\psi(z_3)\psi(z_4)\rangle =\sum_{i=1}^4 R_i+O\big(\delta^{\frac{5}{2}}|\log\delta|\delta^{-\frac{5}{12}}\big).
\end{align}

\noindent\textbf{Step $3^*$.} With the convention that $\bar{B}_{\bar{M}+1}^{(1)}=\bar{B}_{\bar{M}+1}^{(2)}=\mathbb{C}$ and $\bar{B}_0^{(1)}=\bar{B}_{0}^{(2)}=\emptyset$, we can write
\begin{align} \label{eqn::three_aux2_conti}
\begin{split}
\sum_{i=1}^4 R_i&=\sum_{m=1}^{\bar{M}+1}\sum_{j=1}^{\bar{M}+1} \underbrace{P\Big[z_1-\delta\xleftrightarrow[\bar{B}_{m-1}^{(1)}]{B;\bar{B}_m^{(1)}} z_1+\delta,\; z_2-\delta\xleftrightarrow[\bar{B}_{j-1}^{(2)}]{B;\bar{B}_j^{(2)}}z_2+\delta,\; \bar{E}_{m,j}^{(4)}\Big]}_{\bar{T}_{m,j}^{(1)}}\\
	&\quad+\sum_{m=1}^{\bar{M}+1}\sum_{j=1}^{\bar{M}+1} \underbrace{P\Big[z_1-\delta\xleftrightarrow[\bar{B}_{m-1}^{(1)}]{B;\bar{B}_m^{(1)}} z_1+\delta\Big]P\Big[ z_2-\delta\xleftrightarrow[\bar{B}_{j-1}^{(2)}]{B;\bar{B}_j^{(2)}}z_2+\delta\Big]P\Big[ \bar{E}_{m,j}^{(4)}\Big]}_{\bar{T}_{m,j}^{(2)}}\\
	&\quad- \sum_{m=1}^{\bar{M}+1}\sum_{j=1}^{\bar{M}+1} \underbrace{P\Big[z_2-\delta\xleftrightarrow[\bar{B}_{j-1}^{(2)}]{B;\bar{B}_j^{(2)}} z_2+\delta\Big]P\Big[ z_1-\delta\xleftrightarrow[\bar{B}_{m-1}^{(1)}]{B;\bar{B}_m^{(1)}}z_1+\delta,\; \bar{E}_{m,j}^{(4)}\Big]}_{\bar{T}_{m,j}^{(3)}}\\
	&\quad- \sum_{m=1}^{\bar{M}+1}\sum_{j=1}^{\bar{M}+1} \underbrace{P\Big[z_1-\delta\xleftrightarrow[\bar{B}_{m-1}^{(1)}]{B;\bar{B}_m^{(1)}} z_1+\delta\Big]P\Big[ z_2-\delta\xleftrightarrow[\bar{B}_{j-1}^{(2)}]{B;\bar{B}_j^{(2)}}z_2+\delta,\; \bar{E}_{m,j}^{(4)}\Big]}_{\bar{T}_{m,j}^{(4)}},
\end{split}
\end{align}
where
\begin{align*}
    &P\Big[z_1-\delta\xleftrightarrow[\bar{B}_{m-1}^{(1)}]{B;\bar{B}_m^{(1)}} z_1+\delta,\; z_2-\delta\xleftrightarrow[\bar{B}_{j-1}^{(2)}]{B;\bar{B}_j^{(2)}}z_2+\delta,\; \bar{E}_{m,j}^{(4)}\Big]\\
   & \quad :=\lim_{a\to 0}\pi_a^{-6}\times \mathbb{P}^a\Big[z_1^{(a,\delta,-)}\xleftrightarrow[\bar{B}_{m-1}^{(1)}]{B;\bar{B}_m^{(1)}} z_1^{(a,\delta,+)},\, z_2^{(a,\delta,-)}  \xleftrightarrow[\bar{B}_{j-1}^{(2)}]{B;\bar{B}_j^{(2)}} z_2^{(a,\delta,+)},\, z_3^a  \xleftrightarrow[\big(\bar{B}^{(1)}_{2^{m+1}\delta}(z_1^a)\big)^c\oplus \big(\bar{B}^{(2)}_{2^{j+1}\delta}(z_2^a)\big)^c]{B}z_4^a\Big],\\
   &P\Big[\bar{E}_{m,j}^{(4)}\Big]:=\lim_{a\to 0}\pi_a^{-2}\times \mathbb{P}^a\Big[z_3^a  \xleftrightarrow[\big(\bar{B}^{(1)}_{2^{m+1}\delta}(z_1^a)\big)^c\oplus \big(\bar{B}^{(2)}_{2^{j+1}\delta}(z_2^a)\big)^c]{B}z_4^a\Big],\\
   &P\Big[ z_1-\delta\xleftrightarrow[\bar{B}_{m-1}^{(1)}]{B;\bar{B}_m^{(1)}}z_1+\delta,\; \bar{E}_{m,j}^{(4)}\Big] \\
   & \quad :=\lim_{a\to 0}\pi_a^{-4}\times\mathbb{P}^a\Big[z_1^{(a,\delta,-)}\xleftrightarrow[\bar{B}_{m-1}^{(1)}]{B;\bar{B}_m^{(1)}} z_1^{(a,\delta,+)},\, z_3^a  \xleftrightarrow[\big(\bar{B}^{(1)}_{2^{m+1}\delta}(z_1^a)\big)^c\oplus \big(\bar{B}^{(2)}_{2^{j+1}\delta}(z_2^a)\big)^c]{B}z_4^a\Big],\\
   &P\Big[ z_2-\delta\xleftrightarrow[\bar{B}_{j-1}^{(2)}]{B;\bar{B}_j^{(2)}}z_2+\delta,\; \bar{E}_{m,j}^{(4)}\Big] \\
   & \quad :=\lim_{a\to 0}\pi_a^{-4}\times\mathbb{P}^a\Big[z_2^{(a,\delta,-)}\xleftrightarrow[\bar{B}_{j-1}^{(2)}]{B;\bar{B}_j^{(2)}} z_2^{(a,\delta,+)},\, z_3^a  \xleftrightarrow[\big(\bar{B}^{(1)}_{2^{m+1}\delta}(z_1^a)\big)^c\oplus \big(\bar{B}^{(2)}_{2^{j+1}\delta}(z_2^a)\big)^c]{B}z_4^a\Big]
\end{align*}
and the existence of these limits can be shown using the strategy in the proof of~\cite[Theorem~1.5]{CF24}.

\noindent\textbf{Step $4^*$.}
For $0<r_1<r_2<\epsilon$, recall the event $\mathcal{G}_{r_1,r_2}^a(z_1^a,z_2^a,z_3^a,z_4^a)$ defined in~\eqref{eqn::def_event_G}. Fix any $x,y\in (0,1)$ and $m \in \{\lfloor \bar{M}x\rfloor,\, \lceil \bar{M}x \rceil\}$, $j\in \{\lfloor \bar{M}y\rfloor,\, \lceil \bar{M}y\rceil\}$. Then the term $\bar{T}_{m,j}^{(i)}$ defined in~\eqref{eqn::three_aux2_conti} satisfies
\begin{align}
    \begin{split}
        &\lim_{\delta\to 0}\delta^{-\frac{5}{2}+\frac{5}{12}} \times \Big(\sum_{i=1}^2 \bar{T}^{(i)}_{m,j} -\sum_{i=3}^4 \bar{T}_{m,j}^{(i)}\Big)\\
        &\qquad=\Big(V_3-\frac{1}{2}\Big)^2\times \lim_{\delta\to 0}\delta^{-\frac{5}{2}+\frac{5}{12}} \times P\Big[z_1-\delta \xleftrightarrow[\bar{B}_{\bar{m}-1}^{(1)}]{B;\bar{B}_{\bar{m}}^{(1)}}z_1+\delta\Big] P\Big[z_2-\delta \xleftrightarrow[\bar{B}_{j-1}^{(2)}]{B;\bar{B}_{j}^{(2)}}z_2+\delta\Big] \\
        &\qquad\qquad\qquad\quad\times P\Big[\mathcal{G}_{2^{m+1}\delta,2^{j+1}\delta}(z_1,z_2,z_3,z_4)\Big]\\
        &\qquad=\Big(V_3-\frac{1}{2}\Big)^2 \hat{C}\epsilon^{-\frac{5}{2}}\epsilon^{-\frac{5}{24}}\lim_{\eta\to 0} P\Big[\mathcal{G}_{\eta,\eta}(z_1,z_2,z_3,z_4)\big | \mathcal{F}_{\eta,\epsilon}(z_1),\, \mathcal{F}_{\eta,\epsilon}(z_2),\, z_3\xleftrightarrow{B}\partial B_{\epsilon}(z_3),\, z_4\xleftrightarrow{B} \partial B_{\epsilon}(z_4)\Big]\\
        &\qquad=\frac{\hat{C}}{C} U(z_1,z_2,z_3,z_4),
    \end{split}
\end{align}
where $V_3>\frac{1}{2}$ is the constant in~\cite[Lemmas~3.5 and~3.6]{CF24}, $\hat{C}\in (0,\infty)$ is a constant, $C$ is the constant in Lemma~\ref{lem::four_point_energy_spin_aux}, $U(z_1,z_2,z_3,z_4)$ is the function in Lemma~\ref{lem::four_point_energy_spin_aux}, and
\begin{align*}
P\Big[\mathcal{G}_{2^{m+1}\delta,2^{j+1}\delta}(z_1,z_2,z_3,z_4)\Big]:=\lim_{a\to 0}\pi_a^{-2} \times \mathbb{P}^a\Big[\mathcal{G}^a_{2^{m+1}\delta,2^{m+1}\delta}(z_1^a,z_2^a,z_3^a,z_4^a)\Big],
\end{align*}
and where the existence of the last limit can be shown using the strategy in the proof of~\cite[Theorem~1.5]{Cam23}.

It follows from a standard application of RSW estimates (see, e.g., the proofs of Lemmas 2.1 and 2.2 of~\cite{CamiaNewman2009ising}) that
\begin{align} \label{eqn::three_aux4_conti}
	\Big|\delta^{-\frac{5}{2}+\frac{5}{12}}\times \Big(\sum_{i=1}^2 \bar{T}_{m,j}^{(i)}-\sum_{i=3}^4 \bar{T}_{m,j}^{(i)}\Big)\Big|\lesssim 1,\quad \text{for all } 1\leq m,j\leq \bar{M}+1.
\end{align}
Combining~\eqref{eqn::three_aux1_conti}--\eqref{eqn::three_aux4_conti} with~\eqref{eqn::three_aux5} and using the dominated convergence theorem, we obtain
\begin{align}\label{eqn::three_aux7}
    \lim_{\delta\to 0} \delta^{-\frac{25}{12}} |\log\delta|^{-2}\times \langle \phi^{\delta}(z_1)\phi^{\delta}(z_2)\psi(z_3)\psi(z_4)\rangle=\frac{\hat{C}}{C} \langle \phi(z_1)\phi(z_2)\psi(z_3)\psi(z_4)\rangle. 
\end{align}
    Then the desired M\"obius covariance rule for $\langle \phi(z_1)\phi(z_2)\psi(z_3)\psi(z_4)\rangle$ follows readily from~\eqref{eqn::cov_psi} and~\eqref{eqn::three_aux7}. This proves~\eqref{eqn::cov_mixed_correlation_spin_energy} with $\ell=2$ and $k=1$.
\end{proof}

\begin{proof}[Proof of~\eqref{eqn::mixed_correlation_spin_energy} and~\eqref{eqn::cov_mixed_correlation_spin_energy} in Theorem~\ref{thm::energy_spin} with $\ell\geq 2$ and $k\geq 1$.]
The proof of the general case does not require new ideas and follows the same strategy as the proof of~\eqref{eqn::mixed_correlation_spin_energy} and~\eqref{eqn::cov_mixed_correlation_spin_energy} for the special case $\ell =2$ and $k=1$, presented above, but spelling out the details would require the introduction of very complicated notation. We omit the details for the sake of simplicity.
\end{proof}

\subsection{Correlations of $\phi$}
We focus for simplicity on the case $\ell=2$, that is, Eq.~\eqref{eqn::energy_two_point} for the two-point function $\langle \phi(z_1)\phi(z_2)\rangle$, as the proof of the general case~\eqref{eqn::energy_n_point} is essentially the same.

\begin{proof}[Proof of~\eqref{eqn::energy_two_point} in Theorem~\ref{thm::energy_spin}]
A simple calculation gives
	 \begin{align} \label{eqn::energy_two_point_aux1}
\begin{split}
			\langle \mathcal{E}_{z_1^a}\mathcal{E}_{z_2^a}\rangle^a=&\langle S_{z_1^{(a,-)}}S_{z_1^{(a,+)}}S_{z_2^{(a,-)}}S_{z_2^{(a,+)}}\rangle^a-\langle S_{z_1^{(a,-)}}S_{z_1^{(a,+)}}\rangle^a\langle S_{z_2^{(a,-)}}S_{z_2^{(a,+)}}\rangle^a\\
	=&\underbrace{\mathbb{P}^a\Big[z_1^{(a,-)}\xleftrightarrow{B}z_2^{(a,-)}\centernot{\xleftrightarrow{B}}z_1^{(a,+)}\xleftrightarrow{B}z_2^{(a,+)}\Big]}_{R_1^a}+\underbrace{\mathbb{P}^a\Big[z_1^{(a,-)}\xleftrightarrow{B}z_2^{(a,+)}\centernot{\xleftrightarrow{B}}z_1^{(a,+)}\xleftrightarrow{B}z_2^{(a,-)}\Big]}_{R_2^a}\\
	&+\underbrace{\mathbb{P}^a\Big[z_1^{(a,-)}\xleftrightarrow{B}z_1^{(a,+)},\; z_2^{(a,-)}\xleftrightarrow{B}z_2^{(a,+)}\Big]- \mathbb{P}^a\Big[z_1^{(a,-)}\xleftrightarrow{B}z_1^{(a,+)}\Big]\mathbb{P}^a\Big[z_2^{(a,-)}\xleftrightarrow{B}z_2^{(a,+)}\Big]}_{R_3^a}.
\end{split}
	\end{align}
We choose $\epsilon>0$ to be  the largest number such that$B_{2\epsilon}(z_1)\cap B_{2\epsilon}(z_2)=\emptyset$.
%{\color{red} \tt Federico: I added this because, in order to use quasimultiplicativity in~\eqref{eqn::energy_two_point_aux4}, we don't want to choose $\epsilon$ too small.}

For the term $R_1^a$ (resp., $R_2^a$), note that the event $\{z_1^{(a,-)}\xleftrightarrow{B}z_2^{(a,-)}\centernot{\xleftrightarrow{B}}z_1^{(a,+)}\xleftrightarrow{B}z_2^{(a,+)}\}$ (resp., $\{z_1^{(a,-)}\xleftrightarrow{B}z_2^{(a,+)}\centernot{\xleftrightarrow{B}}z_1^{(a,+)}\xleftrightarrow{B}z_2^{(a,-)}\}$) implies the event $\mathcal{F}_{\epsilon}(z_1^a)\cap \mathcal{F}_{\epsilon}(z_2^a)$ for small enough $a>0$. 
We then conclude from~\eqref{eqn::four_arm_proba} and the independence between $\mathcal{F}_{\epsilon}(z_1^a)$ and $\mathcal{F}_{\epsilon}(z_2^a)$ that 
\begin{align} \label{eqn::energy_two_point_aux2}
	R_1^a \lesssim a^{\frac{5}{2}} \quad\text{and}\quad R_2^a\lesssim a^{\frac{5}{2}}. 
\end{align}

It remains to treat the term $R_3^a$. Let $M$ be the largest integer such that $2^M (2a)\leq \epsilon$; note that $M$ is of the order $-\log a$. For $1\leq i\leq 2$ and $1\leq m\leq M$, define $B_{m}^{(i,a)}:=\{w: |w-z_i^a|\leq 2^m(2a)\}$. 

On the one hand, for $\mathbb{P}^a\Big[z_1^{(a,-)}\xleftrightarrow{B}z_1^{(a,+)},\; z_2^{(a,-)}\xleftrightarrow{B}z_2^{(a,+)}\Big]$, as in~\eqref{eqn::four_spin_decom1}, we can write{\footnotesize
\begin{align*}
& \mathbb{P}^a\Big[z_1^{(a,-)}\xleftrightarrow{B}z_1^{(a,+)},\; z_2^{(a,-)}\xleftrightarrow{B}z_2^{(a,+)}\Big]\\
& = \mathbb{P}^a\Big[z_1^{(a,-)}\xleftrightarrow{B;B_1^{(1,a)}} z_1^{(a,+)},\; z_2^{(a,-)} \xleftrightarrow[\big(B_1^{(1,a)}\big)^c]{B}z_2^{(a,+)}\Big]+ \mathbb{P}^a \Big[z_1^{(a,-)}\xleftrightarrow{B;B_1^{(1,a)}}z_1^{(a,+)}\Big] \mathbb{P}^a\Big[z_2^{(a,-)}\xleftrightarrow{B;\big(B_1^{(1,a)}\big)^c}z_2^{(a,+)}\Big]\\
& \quad +\sum_{m=2}^M\Bigg( \mathbb{P}^a\Big[z_1^{(a,-)}\xleftrightarrow[B_{m-1}^{(1,a)}]{B;B_m^{(1,a)}} z_1^{(a,+)},\; z_2^{(a,-)} \xleftrightarrow[\big(B_m^{(1,a)}\big)^c]{B}z_2^{(a,+)}\Big]+ \mathbb{P}^a \Big[z_1^{(a,-)}\xleftrightarrow[B_{m-1}^{(1,a)}]{B;B_m^{(1,a)}}z_1^{(a,+)}\Big] \mathbb{P}^a\Big[z_2^{(a,-)}\xleftrightarrow{B;\big(B_m^{(1,a)}\big)^c}z_2^{(a,+)}\big]\Bigg)\\
& \quad + \mathbb{P}^a\Big[z_1^{(a,-)}\xleftrightarrow[B_{M}^{(1,a)}]{B;\big(B_M^{(2,a)}\big)^c} z_1^{(a,+)},\; z_2^{(a,-)} \xleftrightarrow[B_M^{(2,a)}]{B}z_2^{(a,+)}\Big]+ \mathbb{P}^a \Big[z_1^{(a,-)}\xleftrightarrow[B_{M}^{(1,a)}]{B;\big(B_M^{(2,a)}\big)^c} z_1^{(a,+)}\Big] \mathbb{P}^a\Big[z_2^{(a,-)}\xleftrightarrow{B;B_M^{(2,a)}}z_2^{(a,+)}\big]\\
& \quad +\sum_{m=1}^{M-1}\Bigg( \mathbb{P}^a\Big[ z_1^{(a,-)} \xleftrightarrow[\big(B_{m+1}^{(2,a)}\big)^c]{B;\big(B_m^{(2,a)}\big)^c}z_1^{(a,+)}, \; z_2^{(a,-)}\xleftrightarrow[B_m^{(2,a)}]{B} z_2^{(a,+)}\Big]+ \mathbb{P}^a\Big[ z_1^{(a,-)} \xleftrightarrow[\big(B_{m+1}^{(2,a)}\big)^c]{B;\big(B_m^{(2,a)}\big)^c}z_1^{(a,+)}\Big]\mathbb{P}^a\Big[ z_2^{(a,-)}\xleftrightarrow{B;B_m^{(2,a)}} z_2^{(a,+)}\Big]\Bigg)\\
& \quad + \mathbb{P}^a\Big[ z_1^{(a,-)} \xleftrightarrow[\big(B_{1}^{(2,a)}\big)^c]{B}z_1^{(a,+)}, \; z_2^{(a,-)}\xleftrightarrow{B} z_2^{(a,+)}\Big].
\end{align*}}
On the other hand,  we can decompose $\mathbb{P}^a\Big[z_1^{(a,-)}\xleftrightarrow{B}z_1^{(a,+)}\Big]\mathbb{P}^a\Big[ z_2^{(a,-)}\xleftrightarrow{B}z_2^{(a,+)}\Big]$ in a similar way. Consequently, we have
{\footnotesize\begin{align} \label{eqn::energy_two_point_aux3}
\begin{split}
			&	\mathbb{P}^a\Big[z_1^{(a,-)}\xleftrightarrow{B}z_1^{(a,+)},\; z_2^{(a,-)}\xleftrightarrow{B}z_2^{(a,+)}\Big]- \mathbb{P}^a\Big[z_1^{(a,-)}\xleftrightarrow{B}z_1^{(a,+)}\Big]\mathbb{P}^a\Big[ z_2^{(a,-)}\xleftrightarrow{B}z_2^{(a,+)}\Big]\\
	=&	 \underbrace{\mathbb{P}^a\Big[z_1^{(a,-)}\xleftrightarrow{B;B_1^{(1,a)}} z_1^{(a,+)},\; z_2^{(a,-)} \xleftrightarrow[\big(B_1^{(1,a)}\big)^c]{B}z_2^{(a,+)}\Big]- \mathbb{P}^a\Big[z_1^{(a,-)}\xleftrightarrow{B;B_1^{(1,a)}} z_1^{(a,+)}\Big] \mathbb{P}^a\Big[z_2^{(a,-)} \xleftrightarrow[\big(B_1^{(1,a)}\big)^c]{B}z_2^{(a,+)}\Big]}_{T_1^{(1,a)}}\\
	&+\sum_{m=2}^M\Bigg( \underbrace{\mathbb{P}^a\Big[z_1^{(a,-)}\xleftrightarrow[B_{m-1}^{(1,a)}]{B;B_m^{(1,a)}} z_1^{(a,+)},\; z_2^{(a,-)} \xleftrightarrow[\big(B_m^{(1,a)}\big)^c]{B}z_2^{(a,+)}\Big]-\mathbb{P}^a\Big[z_1^{(a,-)}\xleftrightarrow[B_{m-1}^{(1,a)}]{B;B_m^{(1,a)}} z_1^{(a,+)}\Big]\mathbb{P}^a\Big[ z_2^{(a,-)} \xleftrightarrow[\big(B_m^{(1,a)}\big)^c]{B}z_2^{(a,+)}\Big]}_{T_m^{(1,a)}}\Bigg)\\
	&+ \underbrace{\mathbb{P}^a\Big[z_1^{(a,-)}\xleftrightarrow[B_{M}^{(1,a)}]{B;\big(B_M^{(2,a)}\big)^c} z_1^{(a,+)},\; z_2^{(a,-)} \xleftrightarrow[B_M^{(2,a)}]{B}z_2^{(a,+)}\Big]- \mathbb{P}^a\Big[z_1^{(a,-)}\xleftrightarrow[B_{M}^{(1,a)}]{B;\big(B_M^{(2,a)}\big)^c} z_1^{(a,+)}\Big]\mathbb{P}^a\Big[ z_2^{(a,-)} \xleftrightarrow[B_M^{(2,a)}]{B}z_2^{(a,+)}\Big]}_{T_{M+1}^a}\\
	&+\sum_{m=1}^{M-1}\Bigg( \underbrace{\mathbb{P}^a\Big[ z_1^{(a,-)} \xleftrightarrow[\big(B_{m+1}^{(2,a)}\big)^c]{B;\big(B_m^{(2,a)}\big)^c}z_1^{(a,+)}, \; z_2^{(a,-)}\xleftrightarrow[B_m^{(2,a)}]{B} z_2^{(a,+)}\Big]-\mathbb{P}^a\Big[ z_1^{(a,-)} \xleftrightarrow[\big(B_{m+1}^{(2,a)}\big)^c]{B;\big(B_m^{(2,a)}\big)^c}z_1^{(a,+)}\Big]\mathbb{P}^a\Big[ z_2^{(a,-)}\xleftrightarrow[B_m^{(2,a)}]{B} z_2^{(a,+)}\Big]}_{T_{m+1}^{(2,a)}}\Bigg)\\
	&+\underbrace{\mathbb{P}^a\Big[ z_1^{(a,-)} \xleftrightarrow[\big(B_{1}^{(2,a)}\big)^c]{B}z_1^{(a,+)}, \; z_2^{(a,-)}\xleftrightarrow{B} z_2^{(a,+)}\Big]-\mathbb{P}^a\Big[ z_1^{(a,-)} \xleftrightarrow[\big(B_{1}^{(2,a)}\big)^c]{B}z_1^{(a,+)}\Big]\mathbb{P}^a\Big[z_2^{(a,-)}\xleftrightarrow{B} z_2^{(a,+)}\Big]}_{T_1^{(2,a)}}.
\end{split}
\end{align}}See Figure~\ref{fig::log_geometry_energy} for an illustration of the event $\{ z_1^{(a,-)} \xleftrightarrow[B_{m-1}^{(1,a)}]{B;B_m^{(1,a)}} z_1^{(a,+)},\; z_2^{(a,-)}\xleftrightarrow[\big(B_m^{(1,a)}\big)^c]{B} z_2^{(a,+)} \}$.

  \begin{figure}
	\includegraphics[width= 0.7\textwidth]{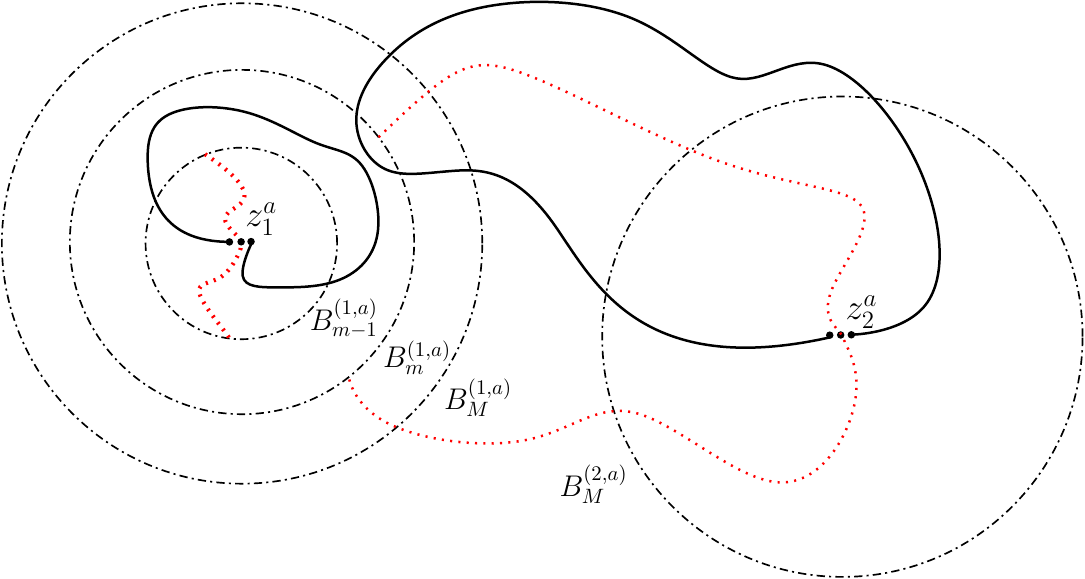}
	\caption{The event $\{ z_1^{(a,-)} \xleftrightarrow[B_{m-1}^{(1,a)}]{B;B_m^{(1,a)}} z_1^{(a,+)},\; z_2^{(a,-)}\xleftrightarrow[\big(B_m^{(1,a)}\big)^c]{B} z_2^{(a,+)} \}$. The black, solid lines denote black paths, while the red, dotted lines denote white paths. $z_1^{(a,-)}$ and $z_1^{(a,+)}$ are contained in $B_{m-1}^{(1,a)}$. They are not connected by a black path within the disk $B_{m-1}^{(1,a)}$, but are connected within the larger disk $B_{m}^{(1,a)}$, with radius twice that of $B_{m-1}^{(1,a)}$. The points $z_2^{(a,-)}$ and $z_2^{(a,+)}$ are connected by a black path, but not outside $B_m^{(1,a)}$. The number, $2M$, of disks one can insert between the two groups of points $\{z_1^{(a,-)},z_1^{(a,+)}\}$ and $\{z_2^{(a,-)},z_2^{(a,+)}\}$ is of order $-\log{a}$.}
	\label{fig::log_geometry_energy}
\end{figure}

Using~\eqref{eqn::four_arm_proba} and the quasi-multiplicativity of four-arm probabilities (see e.g.,~\cite[Proposition~12]{NolinNearCriticalPercolation}), we see that 
\begin{align} \label{eqn::energy_two_point_aux4}
	T_1^{(1,a)} \lesssim a^{\frac{5}{2}},\quad T_1^{(2,a)}\lesssim a^{\frac{5}{2}},\quad T_{M+1}^{a}\lesssim a^{\frac{5}{2}}. 
\end{align}
One can proceed as in the analysis of $\sum_{m=2}^M T_m^a$ in the proof of~\eqref{eqn::three_point_energy_density} to show that 
\begin{align} \label{eqn::energy_two_point_axu5}
	 \lim_{a\to 0} a^{-\frac{5}{2}}|\log a|^{-1}\times \sum_{m=2}^M \Big(T_m^{(1,a)}+T_m^{(2,a)}\Big)  \in (0,\infty). 
\end{align}

Combining~\eqref{eqn::energy_two_point_aux1}-\eqref{eqn::energy_two_point_axu5} gives 
\begin{align*}
	\lim_{a\to 0} \big(a^{\frac{5}{2}}|\log a|\big)^{-2} \, \langle \mathcal{E}_{z_1^a}\mathcal{E}_{z_2^a}\rangle^a=0,
\end{align*}
which completes the proof. 
\end{proof}

\begin{proof}[Proof of~\eqref{eqn::energy_n_point} in Theorem~\ref{thm::energy_spin} with $\ell \geq 2$]
One can proceed essentially as in the proof of~\eqref{eqn::energy_two_point} presented above, but the notation would become much more complicated, so we omit the details for the sake of simplicity. One could also adopt the strategy used in the proof of~\eqref{eqn::mixed_correlation_spin_energy} with $\ell=2$ and $k=1$, presented above, to deal with multiple summations over annuli centered at $z_1^a,\ldots, z_{\ell-1}^a$, respectively.
\end{proof}

%\section{The two-point function of the logarithmic partner of the energy field} \label{sec::hat_hat_two_point}

\paragraph*{Acknowledgments.}
F.~C.'s research is supported by NYU Abu Dhabi via a personal research grant. Y.~F. thanks NYUAD for its hospitality during two visits in the fall of 2023 and of 2024. The first visit was partially supported by the Short-Term Visiting Fund for Doctoral Students of Tsinghua University.
Both authors profited from their participation in the Trimester Program ``Probabilistic methods in quantum field theory" held at the Hausdorff Research Institute for Mathematics (HIM), Bonn, Germany, where this paper was completed. They thank the organizers of the program for their invitation and the Hausdorff Institute for its hospitality.
The program was funded by the Deutsche Forschungsgemeinschaft (DFG, German Research Foundation) under Germany's Excellence Strategy - EXC-2047/1 - 390685813.

\paragraph*{Data availability.}
No datasets were generated or analyzed during the current study.

\paragraph*{Conflict of interest.}
The authors have no competing interests to declare that are relevant to the content of this article.


\begin{thebibliography}{CFGK23}

\bibitem[Cam24]{Cam23}
Federico Camia.
\newblock Conformal covariance of connection probabilities and fields in 2{D}
  critical percolation.
\newblock {\em Comm. Pure Appl. Math.}, 77(3):2138--2176, 2024.

\bibitem[CF24a]{CF24}
Federico Camia and Yu~Feng.
\newblock Conformally covariant probabilities, operator product expansions, and
  logarithmic correlations in two-dimensional critical percolation.
\newblock 2024.
\newblock arXiv:2407.04246.

\bibitem[CF24b]{CamiaFeng2024logarithmic}
Federico Camia and Yu~Feng.
\newblock Logarithmic correlation functions in 2{D} critical percolation.
\newblock {\em J. High Energy Phys.}, 2024(8):1--25, 2024.

\bibitem[CFGK23]{CamiaFoitGandolfiKlebanCFTBLS}
Federico Camia, Valentino~F. Foit, Alberto Gandolfi, and Matthew Kleban.
\newblock Scalar conformal primary fields in the {B}rownian loop soup.
\newblock {\em Comm. Math. Phys.}, 400(2):977--1018, 2023.

\bibitem[CGN15]{CamiaGarbanNewman}
Federico Camia, Christophe Garban, and Charles~M. Newman.
\newblock Planar {I}sing magnetization field {I}. {U}niqueness of the critical
  scaling limit.
\newblock {\em Ann. Probab.}, 43(2):528--571, 2015.

\bibitem[CGN16]{CamiaGarbanNewmanFieldII}
Federico Camia, Christophe Garban, and Charles~M. Newman.
\newblock Planar {I}sing magnetization field {II}. {P}roperties of the critical
  and near-critical scaling limits.
\newblock {\em Ann. Inst. Henri Poincar\'e{} Probab. Stat.}, 52(1):146--161,
  2016.

\bibitem[CHI15]{ChelkakHonglerIzyurovConformalInvarianceCorrelationIsing}
Dmitry Chelkak, Cl\'ement Hongler, and Konstantin Izyurov.
\newblock Conformal invariance of spin correlations in the planar {I}sing
  model.
\newblock {\em Ann. of Math. (2)}, 181(3):1087--1138, 2015.

\bibitem[CHI22]{ChelkakHonglerIzyurovCorrelationsPrimaryFieldsIsing}
Dmitry Chelkak, Cl\'ement Hongler, and Konstantin Izyurov.
\newblock Correlations of primary fields in the critical {I}sing model, 2022.

\bibitem[CJN21]{CamiaJiangNewmanCMEReview}
Federico Camia, Jianping Jiang, and Charles~M. Newman.
\newblock Conformal measure ensembles and planar {I}sing magnetization: a
  review.
\newblock {\em Markov Process. Related Fields}, 27(4):631--663, 2021.

\bibitem[CN06]{CamiaNewmanPercolationFull}
Federico Camia and Charles~M. Newman.
\newblock Two-dimensional critical percolation: the full scaling limit.
\newblock {\em Comm. Math. Phys.}, 268(1):1--38, 2006.

\bibitem[CN07]{CamiaNewmanPercolation}
Federico Camia and Charles~M. Newman.
\newblock Critical percolation exploration path and {${\rm SLE}\sb 6$}: a proof
  of convergence.
\newblock {\em Probab. Theory Related Fields}, 139(3-4):473--519, 2007.

\bibitem[CN08]{SLECLE6}
Federico Camia and Charles~M. Newman.
\newblock {${\rm SLE}_6$} and {${\rm CLE}_6$} from critical percolation.
\newblock In {\em Probability, geometry and integrable systems}, volume~55 of
  {\em Math. Sci. Res. Inst. Publ.}, pages 103--130. Cambridge Univ. Press,
  Cambridge, 2008.

\bibitem[CN09]{CamiaNewman2009ising}
Federico Camia and Charles~M. Newman.
\newblock Ising (conformal) fields and cluster area measures.
\newblock {\em Proc. Natl. Acad. Sci. USA}, 106(14):5547--5463, 2009.

\bibitem[CR13]{creutzig2013logarithmic}
Thomas Creutzig and David Ridout.
\newblock Logarithmic conformal field theory: beyond an introduction.
\newblock {\em J. Phys. A}, 46(49):494006, 72, 2013.

\bibitem[DGLZ24]{SharpArmExponent}
Hang Du, Yifan Gao, Xinyi Li, and Zijie Zhuang.
\newblock Sharp {A}symptotics for {A}rm {P}robabilities in {C}ritical {P}lanar
  {P}ercolation.
\newblock {\em Comm. Math. Phys.}, 405(8):Paper No. 182, 2024.

\bibitem[GK25]{GarbanKupiainenEnergyField}
Christophe Garban and Antti Kupiainen.
\newblock Energy field of critical {I}sing model and examples of singular
  fields in {QFT}, 2025.
\newblock arXiv:2502.02554.

\bibitem[GMQ21]{GwynneMillerQianCLE}
Ewain Gwynne, Jason Miller, and Wei Qian.
\newblock Conformal invariance of {${\rm CLE}_\kappa$} on the {R}iemann sphere
  for {$\kappa\in(4, 8)$}.
\newblock {\em Int. Math. Res. Not. IMRN}, (23):17971--18036, 2021.

\bibitem[GPS13]{GarbanPeteSchrammPivotalClusterInterfacePercolation}
Christophe Garban, G\'{a}bor Pete, and Oded Schramm.
\newblock Pivotal, cluster, and interface measures for critical planar
  percolation.
\newblock {\em J. Amer. Math. Soc.}, 26(4):939--1024, 2013.

\bibitem[Gur93]{Gur93}
V.~Gurarie.
\newblock Logarithmic operators in conformal field theory.
\newblock {\em Nuclear Phys. B}, 410(3):535--549, 1993.

\bibitem[HS13]{HonglerSmirnovIsingEnergy}
Cl\'{e}ment Hongler and Stanislav Smirnov.
\newblock The energy density in the planar {I}sing model.
\newblock {\em Acta Math.}, 211(2):191--225, 2013.

\bibitem[Nol08]{NolinNearCriticalPercolation}
Pierre Nolin.
\newblock Near-critical percolation in two dimensions.
\newblock {\em Electron. J. Probab.}, 13:no. 55, 1562--1623, 2008.

\bibitem[Smi01]{Smirnov:Critical_percolation_in_the_plane}
Stanislav Smirnov.
\newblock Critical percolation in the plane: conformal invariance, {C}ardy's
  formula, scaling limits.
\newblock {\em C. R. Acad. Sci. Paris S\'er. I Math.}, 333(3):239--244, 2001.

\bibitem[SW01]{SmirnovWernerCriticalExponents}
Stanislav Smirnov and Wendelin Werner.
\newblock Critical exponents for two-dimensional percolation.
\newblock {\em Math. Res. Lett.}, 8(5-6):729--744, 2001.

\bibitem[VJS12]{VJS12}
Romain Vasseur, Jesper~Lykke Jacobsen, and Hubert Saleur.
\newblock Logarithmic observables in critical percolation.
\newblock {\em J. Stat. Mech. Theor. Exp.}, 2012(07):L07001, Jul 2012.

\bibitem[Zha20]{ZhanGreen2SLE}
Dapeng Zhan.
\newblock Two-curve {G}reen's function for 2-{SLE}: the interior case.
\newblock {\em Comm. Math. Phys.}, 375(1):1--40, 2020.

\end{thebibliography}
\end{document}